\title{On the loss of continuity for super-critical drift-diffusion equations}
\date{\today}
\author{Luis Silvestre}
\address{Department of Mathematics, The University of Chicago}
\email{luis@math.uchicago.edu}
\author{Vlad Vicol}
\address{Department of Mathematics, Princeton University}
\email{vvicol@math.princeton.edu}
\author{Andrej Zlato\v{s}}
\address{Department of Mathematics, University of Wisconsin--Madison}
\email{andrej@math.wisc.edu}
\theoremstyle{plain}
\newtheorem{theorem}{Theorem}[section]
\newtheorem{lemma}[theorem]{Lemma}
\newtheorem{proposition}[theorem]{Proposition}
\newtheorem{corollary}[theorem]{Corollary}
\theoremstyle{definition}
\newtheorem{remark}[theorem]{Remark}
\def\tilde{\widetilde}
\numberwithin{equation}{section}
\renewcommand\hat{\widehat}
\def\ZZ{{\mathbb Z}}
\def\RR{{\mathbb R}}
\def\DD{\mathcal D}
\def\CC{\mathcal C}
\def\EL{\mathcal L}
\newcommand{\eps}{\varepsilon}
\newcommand{\grad} {\nabla}
\newcommand{\lap}{\Delta}
\newcommand{\dd}{\, \mathrm{d}}
\newcommand{\sign}{\mathrm{sgn}}
\DeclareMathOperator*{\osc}{osc}
\DeclareMathOperator*{\sgn}{sgn}
\def\grad{{\nabla}}
\begin{document}


\begin{abstract}
We show that there exist solutions of drift-diffusion equations in two dimensions with divergence-free super-critical drifts, that become discontinuous in finite time.
We consider classical as well as fractional diffusion. 
However, in the case of classical diffusion and time-independent drifts 
we prove that solutions satisfy a modulus of continuity depending only on the local $L^1$ norm of the drift, which is a super-critical quantity.
\end{abstract}


\maketitle

\section{Introduction} \label{sec:intro}
We study the continuity of solutions to equations with divergence-free drift and fractional or classical diffusion. We prove that in supercritical regimes, there are solutions which become discontinuous in finite time. However, we also prove that in two dimensions, a solution to a drift diffusion equation  with classical diffusion stays continuous if the drift is only locally bounded in $L^1$ (which has supercritical scaling), provided the drift is time-independent.

Equations with drift and diffusion appear in numerous places in mathematical physics. In many cases, the drift depends on the solution and the equation is nonlinear. A successful understanding of well-posedness of the problem in each case depends on the a priori estimates that can be established. In most cases, these are based on the linearized drift-diffusion equation, which provides the motivation for this work.

We consider the problem of continuity of solutions to the Cauchy problem
\begin{align}
& \partial_t \theta + u \cdot \grad \theta + (-\Delta)^s \theta= 0 \label{evo}\\
& \theta(0,\cdot) = \theta_0 \label{evo:IC}
\end{align}
where $s\in(0,1]$, and $u$ is a given divergence-free vector field.

For each $s>0$, the equation has a natural scaling: if $\theta(t,x)$ is a solution of \eqref{evo} with drift $u(t,x)$, then $\theta_\lambda(t,x) = \lambda^{2s-1}\theta(\lambda^{2s} t, \lambda x)$ is a also a solution of \eqref{evo}, but with drift given by $u_\lambda(t,x)= \lambda^{2s-1} u(\lambda^{2s}t, \lambda x)$. A Banach space $X$, with norm $\| \cdot \|_{X}$,
is called {\em critical} with respect to the natural scaling, if $\| u_\lambda\|_X = \| u\|_X$ for all $\lambda > 0$. If on the other hand, $\| u_\lambda\|_{X} \to \infty$ as $\lambda \to 0$, the space is called {\em supercritical}.
In the supercritical cases, when one zooms in at a point (i.e., sends $\lambda \to 0$), the bound on the drift becomes worse, so that regularity of the solutions cannot be inferred from linear perturbation theory. In view of the scaling described above, for $s \in (0,1/2)$ a critical space for \eqref{evo} is the H\"older space $\dot{C}^{1-2s}$, for $s=1/2$ it is the Lebesgue space $L^\infty$, while for $s \in (1/2,1]$ it is $L^{d/(2s - 1)}$. 

In the context of fluid mechanics, the case of divergence-free drifts is of special importance due to incompressibility, while (fractional) diffusion is a regularizing term, for instance, in the well known surface quasi-geostrophic (SQG) model \cite{ConstantinMajdaTabak94,ConstantinWu99}. The possibility of finite time blowup for the SQG equation with supercritical fractional diffusion is an outstanding open problem. One could speculate that the divergence-free character of the drift plays an important role in the well-posedness of the problem. In fact, blow up in finite time does not seem to be known to hold not just for SQG, but for most (if not all) of the supercritical active scalar equations with divergence-free drift currently in the literature (see, for example \cite{CCGR09,ChaeConstantinCordobaGancedoWu12,ConstantinIyerWu2008,FriedlanderRusinVicol11}). In contrast, for some drift-diffusion equations with non-divergence-free drifts, a blow up scenario is possible, and well understood. For example, this is the case in Burgers equation with fractional diffusion \cite{AlibaudDroniouJulien07,KiselevNazarovShterenberg08}, the Keller-Segel model \cite{DolbeaultPerthame2004}, and many other equations \cite{ConstantinLaxMajda85,CordobaCordobaFontelos06, LiRodrigo09}. Even self-similar blow up may  be sometimes obtained, since there is no mechanism which prevents mass-concentration.

Indeed, the divergence-free condition on the drift is known to imply some qualitative properties of the solution. For pure transport equations without diffusion, the flow is well defined almost everywhere just assuming that the drift is in the Sobolev space $W^{1,1}$ instead of the classical Lipschitz assumption of Picard's theorem (see \cite{Ambrosio05} and \cite{DiPernaLions1989}). Also, certain type of singularities are ruled out for divergence-free drifts, as in \cite{CordobaFefferman01}. We give another example of this phenomenon in Section~\ref{sec:trajectories}, where we prove that, in two dimensions, non-vanishing continuous divergence-free vector fields (not necessarily Lipschitz or even H\"older) have unique trajectories {(see also~\cite{BochutDesvillettes01})}. For equations with drifts and diffusion, the divergence-free condition has been used to obtain some estimates which are independent of the size of the drift, e.g. first eigenvalue estimates \cite{BerestyckiHamelNadirashvili05}, mixing rates \cite{ConstantinKiselevRyzhikZlatos08}, and expected exit times \cite{IyerNovikovRyzhikZlatos10}.

For scalar equations with drift and classical diffusion ($s=1$ in \eqref{evo}), if the drift $u$ is assumed to be divergence-free and  in the critical space $L^\infty(BMO^{-1})$, then one can obtain a H\"older estimate on the solution $\theta$ by extending the methods of De Giorgi, Nash, and Moser (see \cite{FriedlanderVicol11a} or \cite{SSSZ12}). If the drift were not assumed to be divergence-free, then one would obtain H\"older continuity of the solution under the stronger assumption $u \in L^\infty(L^n)$. Note that the space $L^\infty(BMO^{-1})$ is larger than $L^\infty(L^n)$, but it has the same scaling. Therefore, the divergence-free assumption only provides a borderline improvement in this result.

In their celebrated paper \cite{CaffarelliVasseur10}, Caffarelli and Vasseur were able to prove well-posedness of the critical SQG equation based on an a priori estimate in H\"older spaces for \eqref{evo} when $s=1/2$ and $u$ is divergence-free and in $L^\infty(BMO)$ (well-posedness of SQG was also proved independently by Kiselev, Nazarov, and Volberg~\cite{KiselevNazarovVolberg07}).  Other proofs of this result were given in \cite{KiselevNazarov09} and~\cite{ConstantinVicol11}.
For non-divergence-free drifts, the same type of H\"older estimate was obtained in \cite{Silvestre2011} by a different method assuming $u \in L^\infty(L^\infty)$. Again, the divergence-free assumption only provides a borderline improvement in the regularity result since $BMO$ and $L^\infty$ have the same scaling properties.

In \cite{ConstantinWu09}, Constantin and Wu investigated lower powers of the Laplacian in the diffusion using  techniques from \cite{CaffarelliVasseur10} and \cite{CaffarelliSilvestre07}. They obtained a priori estimates in H\"older spaces for the equation \eqref{evo}
where $s \in (0,1/2)$, $u$ is divergence-free and  in $L^\infty(C^{1-2s})$. Using the ideas from \cite{Silvestre2011}, the result was generalized to non divergence-free drifts in \cite{Silvestre10b}. These estimates do not suffice to show well-posedness of the surface quasi-geostrophic equation in the supercritical regime. One might wonder whether the result in \cite{ConstantinWu09} could be improved using the divergence-free condition in a stronger way. In fact, in \cite{Chamorro10}, it was suggested that the solution of \eqref{evo} for any $s \in (0,1/2)$ would be H\"older continuous just assuming that $u \in L^\infty(BMO)$ and is divergence-free. We disprove this last statement here. We show that the result in \cite{ConstantinWu09} is in fact sharp by proving that for any $\alpha < 1-2s$, there is a divergence-free drift $u \in L^\infty(C^\alpha)$ for which the solution of \eqref{evo} develops a discontinuity starting from smooth initial data.

We now state our main results.  In this paper a {\it modulus of continuity} will be any continuous non-decreasing $\rho:[0,\infty)\to[0,\infty)$ such that $\rho(0)=0$, and we say that  $\theta:(t_0,t_1)\times\RR^d\to\RR$ {\it breaks (satisfies)}  $\rho$ at time $t$ if there are (no) $x,y\in\RR^d$ such that $|\theta(t,x)-\theta(t,y)|>\rho(|x-y|)$.  Although we will restrict our considerations to the case of two dimensions $d=2$ here, several of our results extend to more dimensions.

Our first result shows that the result of Constantin and Wu~\cite{ConstantinWu09} is sharp, even for time-independent drifts.

\begin{theorem}[\bf Case $s < 1/2$]\label{t:intro1}
Let $s\in(0,1/2)$ and $\alpha\in(0, 1-2s)$. There exist a positive time $T$ and a smooth function $\theta_0$ with $\|\theta_0\|_{C^2(\RR^2)}\le 1$ such that for any modulus of continuity $\rho$,  there exists a smooth divergence-free time-independent vector field $u$ with $\|u\|_{C^\alpha(\RR^2)} \le 1$ such that the smooth solution of \eqref{evo}--\eqref{evo:IC} breaks the modulus  $\rho$  before time $T$.
\end{theorem}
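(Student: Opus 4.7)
\medskip
\noindent\emph{Proof plan.}
I would fix $\theta_0(x_1,x_2)=\phi(x_1)$ for a smooth step profile $\phi$ interpolating $0\to 1$ over $[-2,2]$, normalized so that $\|\theta_0\|_{C^2(\RR^2)}\le 1$. The time $T$ will be a fixed constant depending only on $\alpha$. The guiding mechanism is the separation of timescales implied by $\alpha<1-2s$: at spatial scale $\lambda\in(0,1)$, a Holder-$\alpha$ drift of unit norm acts on timescale $\lambda^{1-\alpha}$, which is strictly shorter than the fractional diffusion timescale $\lambda^{2s}$, so transport can outrun diffusion at small scales.

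Given a modulus $\rho$, I would design a smooth, divergence-free, time-independent drift $u$ with $\|u\|_{C^\alpha}\le 1$ modeled on a ``Holder saddle'' with stream function $\psi(x)=r^{1+\alpha}F(\varphi)$ in polar coordinates near a stagnation point, where $F$ is a hyperbolic angular profile. I place the saddle so that its stable manifold contains two marked points $A_0,B_0\in\RR^2$ at distance of order one, chosen so that $\theta_0(A_0)\approx 0$ and $\theta_0(B_0)\approx 1$. The drift is mollified at a scale much smaller than the target scale $\lambda$ (to be chosen below), so that the flow of the smooth $u$ essentially matches the Holder saddle on all scales $\ge\lambda$. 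Saturating the Holder Gronwall inequality $|\dot d|\le\|u\|_{C^\alpha}\,d^\alpha$ applied to the pairwise distance $d(t)=|\Phi^u_t(A_0)-\Phi^u_t(B_0)|$, the flow compresses $d$ down to $\lambda$ at a time $t^*\lesssim 1/(1-\alpha)$, independent of $\lambda$.

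To control the diffusion, I would write $\theta=\theta_{\mathrm{tr}}+w$ with $\theta_{\mathrm{tr}}$ the pure-transport solution, so that $w$ solves the drift-diffusion equation with zero initial data and source $-(-\Delta)^s\theta_{\mathrm{tr}}$. By the $L^\infty$-maximum principle,
\[\|w(t^*)\|_{L^\infty}\le\int_0^{t^*}\|(-\Delta)^s\theta_{\mathrm{tr}}(\tau)\|_{L^\infty}\,d\tau.\]
The sharp feature of $\theta_{\mathrm{tr}}$ near the compressed trajectory has scale $d(t)=(1-(1-\alpha)t)^{1/(1-\alpha)}$, and the pointwise bound $|(-\Delta)^s\theta_{\mathrm{tr}}|\lesssim d(t)^{-2s}$ combined with the change of variables $u=1-(1-\alpha)t$ yields
\[\int_0^{t^*}d(t)^{-2s}\,dt\lesssim\frac{1}{1-\alpha-2s},\]
a constant depending only on $\alpha,s$, finite precisely because $\alpha+2s<1$. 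Consequently, the amplitude of the compressing jump at time $t^*$ is bounded below by a positive constant $c_0=c_0(\alpha,s)$ independent of $\lambda$. Choosing now $\lambda>0$ small enough that $\rho(\lambda)<c_0/2$ (possible since $\rho(0)=0$), one obtains
\[\bigl|\theta(t^*,\Phi^u_{t^*}(A_0))-\theta(t^*,\Phi^u_{t^*}(B_0))\bigr|\ge c_0>\rho(\lambda),\]
so $\rho$ is broken at some time $t^*<T$.

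The hardest step will be the diffusion estimate: a naive pointwise bound on $(-\Delta)^s\theta_{\mathrm{tr}}$ is too coarse far from the compressing pair, and one must localize the analysis to a small spacetime neighborhood of $\Phi^u_t(A_0)$ and $\Phi^u_t(B_0)$, using that away from these trajectories $\theta_{\mathrm{tr}}$ inherits the regularity of $\theta_0$ through the mollified flow. A secondary difficulty is constructing a smooth divergence-free $u$ that \emph{actually} saturates the Gronwall inequality on the designated pair while respecting the global bound $\|u\|_{C^\alpha}\le 1$; this requires a careful matching of the Holder-saddle geometry to the mollification scale.
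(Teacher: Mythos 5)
Your overall intuition is correct and the geometry you describe (a H\"older saddle compressing a pair of trajectories along its stable manifold, with the compression time integrable precisely when $\alpha+2s<1$) is essentially the same as the paper's: the paper's stream function $\psi_{\alpha,s,\rho}\propto |x_1-x_2|^{1+\alpha}-|x_1+x_2|^{1+\alpha}$ is exactly your $r^{1+\alpha}F(\varphi)$ saddle (suitably cut off), and the ODE $\dot d=-d^\alpha$ with solution $d(t)=(1-(1-\alpha)t)^{1/(1-\alpha)}$ is precisely the paper's $z_\alpha(t)$. The choice of a $\rho$-dependent cutoff scale is also what the paper does.

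The genuine gap is in the diffusion estimate, and it is a conceptual one rather than a technical localization issue as you suspect. Your Duhamel decomposition $\theta=\theta_{\mathrm{tr}}+w$ with
\[
\|w(t^*)\|_{L^\infty}\le\int_0^{t^*}\|(-\Delta)^s\theta_{\mathrm{tr}}(\tau)\|_{L^\infty}\,d\tau\lesssim\int_0^{t^*}d(\tau)^{-2s}\,d\tau\lesssim\frac{1}{1-\alpha-2s}
\]
produces an upper bound on the perturbation that is of the \emph{same order} (indeed, generically larger) than the $O(1)$ jump in $\theta_{\mathrm{tr}}$ you are trying to preserve. Nothing in this estimate forces $c_0>0$: if the right-hand side exceeds the jump amplitude you get no conclusion at all, and there is no small parameter to make it small (it even diverges as $\alpha+2s\to 1$). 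The additive Duhamel bound throws away the crucial sign information — that diffusion can only \emph{decrease} oscillation, never increase it — and replaces it with a one-sided $L^\infty$ bound on $|w|$ that is too crude.

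The paper's resolution is to build a \emph{multiplicatively} decaying sub-solution $\theta_{\alpha,s}(t,x)=c\,e^{-\gamma_{\alpha,s}(t)}\phi_\alpha(x/z_\alpha(t))$ and invoke the comparison principle (Lemma~\ref{lemma:comparison}), rather than an additive perturbation estimate. The key inequality is the ``sub-eigenvalue'' bound $(-\Delta)^s\phi_{\alpha,\lambda}\le c_{\alpha,s}\lambda^{-2s}\phi_{\alpha,\lambda}$ on the upper half-plane (Lemma~\ref{lemma:dissipation} and Corollary~\ref{cor:dissipation}), which lets one absorb the diffusion term into an exponential prefactor $e^{-\gamma(t)}$ with $\gamma(T)\lesssim 1/(1-\alpha-2s)<\infty$. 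Because the decay is multiplicative rather than additive, the oscillation of the sub-solution stays bounded below by $2c\,e^{-c_{\alpha,s}/(1-\alpha-2s)}>0$ for all $\alpha+2s<1$, no matter how large the constant. If you want to rescue your approach, you would need to replace $\theta_{\mathrm{tr}}$ by such a modulated sub-solution and use comparison rather than Duhamel — at which point you will have reproduced the paper's argument.

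A secondary point: to apply the comparison principle cleanly one works with odd-in-$x_2$ data (two bumps of opposite sign on the stable manifold) rather than a monotone step $\theta_0(x)=\phi(x_1)$; this gives the sign-definiteness of $(-\Delta)^s\bar\theta$ needed in Lemma~\ref{lemma:comparison}. Your step-profile choice is not wrong in spirit but would complicate the comparison argument.
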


For the case $s \geq 1/2$, a critical assumption on the drift can be given in terms of Lebesgue spaces. It is conceivable that the method of Caffarelli and Vasseur~\cite{CaffarelliVasseur10} can be extended to $s \geq 1/2$ in dimension $d$ assuming that $u \in L^\infty(L^{d/(2s-1)})$, although this has not been written down anywhere, to the best of our knowledge. We prove that no weaker assumption on $u$ could assure continuity of the solution.

\begin{theorem}[\bf Case ${1/2 \leq s <1}$]\label{t:intro2}
Let $s \in [1/2,1)$ and $p \in[1, 2/(2s-1))$. There exist a positive time $T$ and a smooth function $\theta_0$ with $\|\theta_0\|_{C^2(\RR^2)}\le 1$ such that for any modulus of continuity $\rho$, there exists a smooth divergence-free time-independent vector field $u$ with $\|u\|_{L^p(\RR^2)}\le 1$ such that the smooth solution of \eqref{evo}--\eqref{evo:IC} breaks the modulus $\rho$ before time $T$.
\end{theorem}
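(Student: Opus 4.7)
The plan is to imitate the construction used for Theorem~\ref{t:intro1}: we build a time-independent, divergence-free drift $u$ concentrated in a thin annulus near the unit circle, whose angular shearing winds up the level sets of a fixed $\theta_0$ into a macroscopic oscillation on a prescribed small scale, before diffusion has time to smear it. The supercriticality condition $p < 2/(2s-1)$ is exactly what makes the required drift have $L^p$ norm bounded by $1$.

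Concretely, I would fix a smooth $\theta_0 \in C^\infty_c(\RR^2)$ with $\|\theta_0\|_{C^2(\RR^2)}\le 1$ whose restriction to the unit circle has oscillation at least some universal $\kappa > 0$ (for instance $\theta_0(x,y) = \tfrac14 x\chi(x^2+y^2)$ for a suitable radial cutoff), and take $T > 0$ depending only on $\theta_0$. Given a modulus $\rho$, I would choose small parameters $\delta, A, \mu > 0$ and set $u = \nabla^\perp\psi$ with $\psi$ a smooth radial stream function whose derivative is supported in $[1, 1+\delta]$ and has amplitude $A$. The three free parameters are tuned to simultaneously satisfy: (i) the $L^p$ bound $\|u\|_{L^p}\lesssim A\delta^{1/p}\le 1$; (ii) the shearing condition that the angular phase $\omega(r)t$, with $\omega(r) = \psi'(r)/r$, varies by $\ge 2\pi$ across the annulus before time $t$, so that the pure-transport solution $\theta^{\mathrm{tr}}(t, r, \phi) = \theta_0(r, \phi - \omega(r)t)$ exhibits a jump of order $\kappa$ between a point inside the annulus and a nearby point just outside it; and (iii) the diffusion condition that $\mu$-scale features survive until time $t$, i.e.\ $t\lesssim \mu^{2s}$. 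These three can be satisfied simultaneously with $t \le T$ and arbitrarily small $\mu$ exactly when $p < 2/(2s-1)$, mirroring the role of the exponent $1-2s$ in Theorem~\ref{t:intro1}.

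The heart of the proof is to show that the diffusion does not destroy the oscillation produced by the pure-transport dynamics. The difference $\theta - \theta^{\mathrm{tr}}$ solves the drift-diffusion equation with zero initial data and source $-(-\Delta)^s \theta^{\mathrm{tr}}$. A crude Duhamel bound gives $\|\theta - \theta^{\mathrm{tr}}\|_{L^\infty} \le t\,\|(-\Delta)^s\theta^{\mathrm{tr}}\|_{L^\infty}$, but at the scaling threshold this is just barely too weak. A sharper argument is required: either exploiting the mean-zero oscillatory structure of $\theta^{\mathrm{tr}}$ on annular arcs (so that the true error is much smaller than the crude bound), or using a maximum principle with a well-chosen auxiliary barrier, in the spirit of the proof of Theorem~\ref{t:intro1}. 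Once this error estimate is in place, one chooses $\mu$ so small that $\rho(C\mu) < \kappa/4$ — permissible since $\rho(0)=0$ — and concludes that $\theta(t,\cdot)$ breaks $\rho$ at a time $t \le T$.

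The principal obstacle is precisely this refined diffusion error estimate at the critical scaling: the naive Duhamel inequality fails because $\|(-\Delta)^s\theta^{\mathrm{tr}}\|_{L^\infty}$ blows up at just the rate that makes the short-time Duhamel contribution of the same order $\kappa$ as the oscillation we want to retain. Overcoming this via cancellation or a pointwise barrier is exactly where the nontrivial analysis lives, and is parallel to the corresponding step in the proof of Theorem~\ref{t:intro1}; the fact that the threshold $p < 2/(2s-1)$ appears sharply is precisely a reflection of this borderline balance between shearing, diffusion, and the $L^p$ cost of the drift.
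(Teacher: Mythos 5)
Your construction (an annular shear that winds up the level sets of $\theta_0$) is genuinely different from the one used in the paper, and — setting aside the diffusion-error estimate you leave open — it cannot actually reach the stated threshold $p<2/(2s-1)$. In your setup the drift has magnitude $A$ on an annulus of width $\delta$, so $\|u\|_{L^p}\sim A\delta^{1/p}$; after time $t$ the wound-up solution $\theta^{\mathrm{tr}}(t,r,\phi)=\theta_0(r,\phi-\omega(r)t)$ develops radial oscillations at scale $\mu\sim\delta/(At)$, and your diffusion condition is $t\lesssim\mu^{2s}$. Eliminating gives $A\gtrsim\delta/\mu^{1+2s}$ together with $A\le\delta^{-1/p}$, hence $\delta^{1+1/p}\lesssim\mu^{1+2s}$; combined with the geometric constraint $\mu\le\delta$ (the oscillation must live inside the annulus) this forces $\delta^{1+1/p}\lesssim\delta^{1+2s}$, i.e.\ $p\le 1/(2s)$. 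Since $1/(2s)\le 1<2/(2s-1)$ throughout $s\in[1/2,1)$, this covers essentially none of the claimed range; at $s=1/2$ you only reach $p=1$ while the theorem allows all finite $p$. The obstruction is structural: an $L^\infty$ shear must run for time $\gtrsim 1/A$ per winding, and during that time the filaments already sit at the small scale $\mu$ and dissipate for the full duration. Concentrating $u$ along the annulus (say $|u|\sim|r-1|^\alpha$) does not help either, since a one-dimensional singular set only gives $u\in L^p$ for $p<1/|\alpha|$, which again yields $p<1/(2s)$ after balancing against diffusion.

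The paper's construction is a hyperbolic cellular flow: $u=M^{-1}\nabla^\perp\psi_{\alpha,s,\rho}$ with $|u(x)|\sim|x|^\alpha$ ($\alpha<0$) in a sector near the origin, pointing inward along the $x_2$-axis, together with an odd-in-$x_2$ pair of bumps advected self-similarly toward the origin via $z_\alpha(t)=(1-(1-\alpha)M^{-1}t)^{1/(1-\alpha)}$. This wins on two counts. A point singularity gives $u\in L^p$ for all $p<2/|\alpha|$ (a factor of $d=2$ better than a curve singularity), and because the bump spends less and less time at each scale as it shrinks, the accumulated dissipation $\int_0^T z(\tau)^{-2s}\,d\tau=\int_0^1 z^{-2s-\alpha}\,dz$ converges exactly when $\alpha<1-2s$; optimizing $\alpha\to(1-2s)^-$ gives $p<2/(2s-1)$.

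On your second point: the refined Duhamel bound you identify as the heart of the matter is never needed in the paper's proof, because the paper never compares $\theta$ to $\theta^{\mathrm{tr}}$. Instead it constructs an explicit odd-in-$x_2$ subsolution $\theta_{\alpha,s}(t,x)=c\,e^{-\gamma_{\alpha,s}(t)}\phi_\alpha(x/z_\alpha(t))$ of the full drift-diffusion equation and invokes a reflection comparison principle (Lemma~\ref{lemma:comparison}). The only quantitative input on the dissipation is the elementary pointwise estimate $(-\Delta)^s\phi_\lambda\le c_{\alpha,s}\lambda^{-2s}\phi_\lambda$ on $\{x_2>0\}$ (Lemma~\ref{lemma:dissipation}, Corollary~\ref{cor:dissipation}), obtained by exploiting the oddness of $\phi_\alpha$ rather than any cancellation in $\theta^{\mathrm{tr}}$. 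If you want to push your own construction further, this subsolution-plus-comparison route is the step to emulate.
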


The results obtained in Theorems~\ref{t:intro1} and \ref{t:intro2} for {\em smooth} drifts can be used to prove that there are divergence-free time-independent $u \in C^\alpha$ (with $\alpha<1-2s$, if $s\in(0,1/2)$), respectively $u \in L^p$ (with $p<2/(2s-1)$, if $s\in[ 1/2,1)$), with distributional solutions of the initial value problem \eqref{evo}--\eqref{evo:IC} evolving from smooth initial data, which fail to be continuous at any $t> 0$. 
Indeed, the drifts for different $\rho$s are constructed from a $\rho$-independent non-smooth drift using $\rho$-dependent cutoffs near the origin (where $\rho$ will be broken) to ensure smoothness.  Removing this cutoff at the origin will result in a (limiting as cutoff area shrinks to the origin) distributional solution which breaks any modulus $\rho$ in finite time.  Moreover, this is true for any time $t>0$ thanks to infinite speed of propagation of diffusion. 
{We believe that by following the ideas in \cite{Ambrosio05,DiPernaLions1989}, one can show that these distributional solutions are unique whenever the divergence free drift lies in
$L^1(BV)$ (and all the drifts considered in our Theorems~\ref{t:intro1}--\ref{t:intro3} have this regularity). 
}

One might ask what happens in the endpoint case of classical (and local) diffusion $s=1$, and the answer is quite intriguing.  First, we show that if we allow the drift to be time-dependent, then the above results continue to hold. {(We note that the remark after Theorem \ref{t:intro2} also remains valid in this case, this time after the removal of the temporal cutoff near the ``blow-up'' time $t_q>0$ from the proof, albeit with breaking of all moduli guaranteed only by time $t_q$.)}


\begin{theorem}[\bf Case $s=1$: time-dependent drifts]\label{t:intro3}
Let $s=1$ and $p\in[1, 2)$. There exist a positive time $T$ and a smooth function $\theta_0$ with $\|\theta_0\|_{C^2(\RR^2)}\le 1$ such that for any modulus of continuity $\rho$, there exists a smooth divergence-free  vector field $u$ with $\|\sup_{t} |u|\|_{L^p(\RR^2)}\le 1$ such that the smooth solution of \eqref{evo}--\eqref{evo:IC} breaks the modulus  $\rho$ before time $T$.
\end{theorem}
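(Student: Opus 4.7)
The plan is to build $u$ as the finite temporal concatenation of rescaled and rotated copies of a single smooth ``gadget,'' each one amplifying the gradient of $\theta$ at the origin at a progressively smaller spatial scale.  Fix once and for all $\theta_0\in C^\infty_c(\RR^2)$ with $\theta_0(x)=x_1$ on a neighborhood of the origin and $\|\theta_0\|_{C^2}\le 1$; this is independent of $\rho$.  As the unit-scale gadget I would take a smoothly truncated hyperbolic stagnation-point flow $V(t,x)=\chi(t)\eta(x)(-x_1,x_2)$ with $\chi\in C^\infty_c((0,1))$ and $\eta\in C^\infty_c(B_1)$ equal to $1$ on $B_{1/2}$.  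Near the origin the drift $CV$ agrees with the exact linear flow $C(-x_1,x_2)$, which has $\lap x_1=0$ along trajectories and stretches level sets exponentially, so the solution of $\partial_t\phi+CV\cdot\grad\phi=\lap\phi$ starting from $\phi_0(x)=x_1$ satisfies $|\grad\phi(1,0)|\ge e^{cC}$ for a universal $c>0$.

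For each scale $r\in(0,1)$ the rescaled gadget $\tilde V_r(t,x)=r^{-1}V(t/r^2,x/r)$ is supported in $[0,r^2]\times B_r$, produces the same amplification factor $e^{cC}$ of $|\grad\theta(0)|$ over its active time $r^2$ (by the change of variables $\theta(t,x)=r\phi(t/r^2,x/r)$), and satisfies $\|\sup_t|\tilde V_r|\|_{L^p(\RR^2)}\le C_1 Cr^{(2-p)/p}$ for a universal $C_1$.  The positivity of the exponent $(2-p)/p$ (i.e.\ $p<2$) is the crucial supercritical fact that makes the whole construction possible.  Given $\rho$, pick $\epsilon\in(0,1)$ small and $C>0$ with $e^{cC}\epsilon>2$ and $C\epsilon^{(2-p)/p}\le c_0$ for an absolute constant $c_0$; both conditions are compatible because the first requires $C\gtrsim -\log\epsilon$ while the second allows $C$ to grow polynomially like $\epsilon^{-(2-p)/p}$, and the latter dominates as $\epsilon\to 0$.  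Place $N$ rescaled gadgets at scales $r_k=\epsilon^k$ on pairwise disjoint time intervals of lengths $r_k^2$ inside $[0,T]$ (which fit because $\sum r_k^2<\infty$), each rotated so that its expanding direction at the activation time matches the current $\grad\theta(\cdot,0)$.  Using linearity of the equation in $\theta$ and the fact that at scales below $r_k$ the solution is well-approximated by its linearization at the origin, each gadget multiplies $|\grad\theta(0)|$ by at least $e^{cC}/2$, the factor $1/2$ absorbing the loss from the deviation from linearity and the short diffusive gap.  After $N$ gadgets one obtains $\osc_{B_{r_N/2}}\theta(t_N,\cdot)\ge (e^{cC}\epsilon/2)^N/2>\rho(r_N)$ for $N$ large, so $\rho$ is broken at time $t_N\le T$.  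Because the gadgets have disjoint time supports and nested spatial supports, $\sup_t|u(t,x)|\le C_2 C/r_k$ on $B_{r_k}\setminus B_{r_{k+1}}$, and a direct computation gives
\begin{equation*}
\|\sup_t|u|\|_{L^p(\RR^2)}^p\ \le\ C_2^p C^p\sum_{k=1}^\infty \epsilon^{k(2-p)}\ \le\ C_3\,C^p\epsilon^{2-p}\ \le\ 1
\end{equation*}
by the choice of $c_0$.  Smoothness and divergence freeness of $u$ are clear since each gadget is a smooth divergence-free vector field compactly supported in space-time and they have disjoint temporal supports.

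The main obstacle is the quantitative bookkeeping at the two ``non-linear'' places: verifying that a single gadget really amplifies the gradient by a universal factor strictly greater than $1$ in the presence of the full drift-diffusion equation (not merely the limiting flow) and tracking the error introduced by approximating the true solution by its linearization at the origin between consecutive gadgets, so that these errors can be absorbed into the loss factor $1/2$ at each step and the product of amplifications does not degenerate after $N$ iterations.  The necessity of time-dependence is forced by the positive result later in this paper: a time-independent divergence-free drift in $L^1_{\mathrm{loc}}$ always yields a continuity estimate, and it is precisely because our gadgets are switched off between activations---so that coarse-scale drifts do not interfere with finer-scale amplifications---that the cumulative gradient blow-up is not ruled out.
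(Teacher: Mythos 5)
Your construction is genuinely different from the paper's. The paper (Section~\ref{sec:laplacian}) builds a \emph{single} time-dependent drift $u(t,x)=\tfrac{1}{Mz(t)^{2/q}}\bar u(x/z(t))$, where $z$ solves $\dot z=-M^{-1}z^{-2/q}$ and reaches $0$ in finite time; the blow-up then falls out of the abstract comparison result (Theorem~\ref{thm:abstract}) with $H(\lambda)=C\lambda^{-2}$, because $\int_0^{t_q} z(t)^{-2}\,dt<\infty$ exactly when $q<2$. This is a continuous shrinking of a single barrier, not a discrete iteration, and is self-contained: the entire quantitative content is packaged in the Laplacian bound $-\Delta\phi\le C\phi$ on the bump and the integrability of $z^{-2}$.

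Your iterative-gadget strategy captures the right supercritical intuition (the $L^p$ budget for $p<2$ sums over dyadic scales), but as written it has gaps I do not see how to close without essentially redoing the paper's barrier argument at every step.

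\begin{itemize}
\item The core estimate -- that a single truncated hyperbolic gadget yields $|\nabla\phi(1,0)|\ge e^{cC}$ for the actual drift--diffusion solution -- is asserted, not proved. The identity $\Delta x_1=0$ and the exact formula $\phi=e^{\Lambda(t)}x_1$ hold only for the \emph{untruncated} linear drift acting on the \emph{globally} linear datum, neither of which you have: $\theta_0$ is compactly supported (so bounded, so not $x_1$ everywhere) and $V$ is cut off outside $B_1$. Controlling the error committed by the cutoff requires a barrier or comparison argument, which is precisely the technical content of Lemma~\ref{lemma:dissipation}/Corollary~\ref{cor:dissipation}/Lemma~\ref{lemma:sub:transport} in the paper. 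You cannot treat it as free.
\item The iteration bookkeeping is internally inconsistent. You take $e^{cC}\eps>2$, hence $(e^{cC}\eps/2)>1$, and then claim $\osc_{B_{r_N/2}}\theta(t_N,\cdot)\ge (e^{cC}\eps/2)^N/2>\rho(r_N)$ for $N$ large. But the maximum principle gives $\osc\theta\le 2\|\theta_0\|_\infty\le 2$ at all times, which forces $N\lesssim 1/\log(e^{cC}\eps/2)$. So $N$ cannot be taken large; it is bounded by a universal constant once $C,\eps$ are fixed, and the argument as stated self-destructs. What you actually need is to stop the iteration once the oscillation on $B_{r_N}$ reaches a universal constant, and then take $\eps$ small so that $\rho(\eps^N)$ is below that constant -- but this interacts nontrivially with the next point.
\item The ``$1/2$ loss per step'' has to absorb the deviation from linearity, but that deviation is not uniformly small across steps: the solution is linear with slope $G_k$ only out to scale $\lesssim 1/G_k$, and $G_k r_k=(e^{cC}\eps/2)^k$ is exactly the quantity that your construction drives toward $1$. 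So the linearization error degrades as the iteration proceeds, and a constant loss factor per step is not justified. Moreover, during the $k$-th gadget the diffusion acts for time $r_k^2$, which smooths precisely at the scale $r_k$ on which you want to create a new concentration at scale $r_{k+1}=\eps r_k\ll r_k$; the P\'eclet number at the concentration scale is $C\eps\sim\eps\log(1/\eps)\to0$, so it is far from obvious that the gadget can outrun diffusion at that scale without a genuine subsolution argument.
\end{itemize}

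The underlying picture -- a time-dependent drift concentrating at a spatial scale that shrinks in time, with the $L^p$, $p<2$, hypothesis making the sum over scales converge -- is the right one, and you are correct that time-dependence is essential in light of Theorem~\ref{t:intro4}. But to turn the sketch into a proof you would need a quantitative barrier at each stage, at which point the paper's approach of building a single continuously-rescaled subsolution and invoking Theorem~\ref{thm:abstract} is both more efficient and does not require tracking error accumulation over an iteration.
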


In the case of time-independent drifts, however, Theorem~\ref{t:intro3} is surprisingly false!  We prove that the solution $\theta$ has a logarithmic modulus of continuity which depends on $u$ via its local (supercritical) $L^1$ norm only and, in fact, continuous distributional solutions exist for non-smooth locally $L^1$ drifts.
This is a remarkable property which holds in two space dimensions only.

\begin{theorem}[\bf Case $s=1$: time-independent drifts]\label{t:intro4}  
Let $s=1$ 
and assume that $u \in L^1_{\rm loc}(\RR^2)$ with $\|u\|_{L^1_{\rm loc}(\RR^2)} = \sup_{x\in\RR^2} \|u\|_{L^1(B_1(x))}<\infty$ is a divergence-free time-independent vector field.  If $\theta_0\in C^2(\RR^2)\cap {W^{4,1}(\RR^2)}$, then there is a distributional solution  of \eqref{evo}--\eqref{evo:IC} which is continuous
and at any time $t>0$ satisfies  a  modulus of continuity given by
\begin{align}
\rho_t(r) =  \frac{{C(1+\|u\|_{L^1_{\rm loc}}) \|\theta_0\|_{C^2\cap W^{4,1}}} (1 + t^{-1})}{\sqrt{ - \log r}}
\label{e:MOC}
\end{align}
{for $r \in (0,1/2)$, with some universal $C>0$. }
\end{theorem}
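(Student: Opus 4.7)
The strategy is to exploit the two-dimensional time-independent divergence-free structure to convert the drift-diffusion equation into a uniformly elliptic equation in divergence form whose ``super-critical'' coefficient is antisymmetric, and then to run a De Giorgi-type iteration on mollified solutions.

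Since $u$ is divergence-free on $\RR^2$, write $u = \grad^\perp \psi$ with $\grad\psi\in L^1_{\rm loc}$. The identity $u\cdot\grad\theta = -\dv(\psi\grad^\perp\theta)$ then recasts \eqref{evo} as
\begin{equation*}
\partial_t\theta = \dv\bigl((I + \psi J)\grad\theta\bigr),
\end{equation*}
where $J$ is the rotation by $\pi/2$. Since $\xi\cdot(I+\psi J)\xi = |\xi|^2$, the symmetric part of the coefficient matrix is the identity, so the equation has uniform ellipticity $1$, and all of the super-criticality is encoded in the skew-symmetric scalar $\psi$. The equation is invariant under $\psi\mapsto\psi+c$, so on any ball $B_R(x_0)\subset\RR^2$ with $R\le 1$ I normalize $\psi$ to have zero mean, and the two-dimensional Sobolev--Poincar\'e embedding $W^{1,1}\hookrightarrow L^2$ yields $\|\psi\|_{L^2(B_R)}\le C\|u\|_{L^1(B_R)}\le C\|u\|_{L^1_{\rm loc}}$ uniformly in $R$. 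This local $L^2$ control on $\psi$ is the sole place where the two-dimensional assumption enters.

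Next, mollify $u$ to smooth $u_\eps = \grad^\perp\psi_\eps$ and let $\theta_\eps$ be the smooth solution of \eqref{evo}--\eqref{evo:IC} with drift $u_\eps$; the maximum principle gives $\|\theta_\eps(t,\cdot)\|_{L^\infty}\le \|\theta_0\|_{L^\infty}$. Testing the divergence-form equation against $\eta^2(\theta_\eps-k)^+$ for a standard parabolic cutoff $\eta$ on a cylinder over $B_R(x_0)$, and using the pointwise identity $\grad f\cdot J\grad f\equiv 0$ which kills the diagonal contribution of the skew coefficient, produces after Cauchy--Schwarz and Young's inequality the Caccioppoli-type bound
\begin{multline*}
\partial_t \int \eta^2 [(\theta_\eps-k)^+]^2 + \int \eta^2 |\grad(\theta_\eps-k)^+|^2 \\ \le C \int [(\theta_\eps-k)^+]^2 (1+\psi_\eps^2)|\grad\eta|^2,
\end{multline*}
in which the super-critical factor $\psi_\eps^2$ appears only against $|\grad\eta|^2$, where the $L^2$ bound from the previous paragraph applies. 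A De Giorgi iteration on truncations, closed by the $L^\infty$ bound, then produces an oscillation decay of the form
\begin{equation*}
\osc_{Q_r}\theta_\eps \le \Bigl(1 - \frac{\delta}{1+\log(R/r)}\Bigr)\osc_{Q_R}\theta_\eps
\end{equation*}
for parabolic cylinders $Q_r\subset Q_R$, with $\delta = \delta(\|u\|_{L^1_{\rm loc}})>0$. Iterating dyadically over $r=2^{-k}R$ produces the logarithmic modulus \eqref{e:MOC}; short-time diffusive smoothing of $\theta_\eps$ accounts for the $(1+t^{-1})$ prefactor, and the hypothesis $\theta_0\in W^{4,1}$ enters in controlling an initial-layer term via an $L^1$-based Nash-type estimate.

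Finally, since the modulus and the $L^\infty$ bound are uniform in $\eps$, Arzel\`a--Ascoli gives a continuous subsequential limit $\theta$ satisfying \eqref{e:MOC}, and passage to the limit in the weak formulation uses $u_\eps\to u$ in $L^1_{\rm loc}$ and $\theta_\eps\to\theta$ locally uniformly. The main obstacle is the super-criticality of $L^1_{\rm loc}$: no classical Moser or Nash iteration applies, since $u$ need not lie in any Stummel--Kato or Morrey class. It is precisely the two-dimensional stream-function reformulation that shifts the super-critical information from a genuine $L^1$ drift to an antisymmetric coefficient $\psi\in L^2_{\rm loc}$; the exponent $1/2$ in $1/\sqrt{-\log r}$ reflects the square-root gap between this $L^2$ control and the critical $L^\infty$ (i.e., BMO) case handled in \cite{SSSZ12}, and saturates what this method can deliver.
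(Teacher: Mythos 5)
There is a genuine gap, and it is structural: nothing in your argument actually uses that $u$ is time-independent, yet time-independence is essential to the theorem. Your stream-function reformulation $\partial_t\theta=\dv((I+\psi J)\nabla\theta)$, the Sobolev--Poincar\'e control $\|\psi-\bar\psi\|_{L^2(B_R)}\lesssim\|u\|_{L^1(B_R)}$, the Caccioppoli inequality, and the proposed De Giorgi iteration all go through verbatim for time-dependent $\psi(t,x)$ with $\sup_t\|\psi(t,\cdot)\|_{L^2_{\rm loc}}<\infty$. But the drifts constructed in the proof of Theorem~\ref{t:intro3} (self-similar rescalings $u(t,x)=M^{-1}z(t)^{-2/q}\bar u(x/z(t))$ with $q>1$) satisfy exactly this: $\sup_t\|u(t,\cdot)\|_{L^1_{\rm loc}}<\infty$ and $\sup_t\|\psi(t,\cdot)\|_{L^2}<\infty$, yet the corresponding solutions break every modulus of continuity in finite time. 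So your chain of estimates cannot possibly close as stated.

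The place it breaks is the iteration step. In the Caccioppoli inequality the skew coefficient produces the term $\int v^2\psi_\eps^2|\nabla\eta|^2$, and since $\psi$ is only in $L^2_{\rm loc}$ the best one can do is $\int v^2\psi_\eps^2|\nabla\eta|^2\lesssim\|v\|_{L^\infty}^2\,\|\psi\|_{L^2(\supp\nabla\eta)}^2\,\|\nabla\eta\|_{L^\infty}^2$. Unlike the critical case $\psi\in L^\infty(BMO)$, the quantity $\|\psi-\bar\psi\|_{L^2(B_r)}$ does not shrink like $r$; it merely stays bounded as $r\to0$. Thus the right-hand side is bounded below by $\approx(\osc_{Q_R}\theta)^2$ independently of scale, and the level-set measure estimates in the De Giorgi loop never improve from one step to the next. (Note also that your displayed oscillation decay $\osc_{Q_r}\le(1-\delta/(1+\log(R/r)))\osc_{Q_R}$, taken at a fixed dyadic ratio, would in fact give a geometric decay, i.e.\ H\"older continuity, which is more than the theorem can deliver and more than your Caccioppoli supports.) The paper instead makes essential use of time-independence by noting that $\partial_t\theta$ solves the same equation, which yields $\|\partial_t\theta(t,\cdot)\|_{L^\infty}$ and $\|\nabla\theta(t,\cdot)\|_{L^2}$ bounds via Nash's fundamental-solution estimate (Theorem~\ref{t:nash}) and the energy inequality (Lemmas~\ref{l:linfty-decay}, \ref{l:energy-ineq}). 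The $L^\infty$ control on $\partial_t\theta$ is what converts Nash's parabolic bound into an \emph{approximate elliptic} maximum principle on each fixed time slice (Lemma~\ref{l:elliptic-max}), and it is this slice-wise maximum principle, combined with the slice-wise $H^1$ bound and the Lebesgue/\cite{SSSZ12} annular-oscillation computation (Theorem~\ref{t:modulus}), that produces the $1/\sqrt{-\log r}$ modulus---not a De Giorgi iteration.
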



{We remark that if instead of $u \in L^{1}_{\rm loc}$ we assume $u \in L^{1}$, then one may lower the regularity assumption on the initial data to $\theta_{0} \in C^{2} \cap W^{2,1}$ (see the proof).}

We note that the last claim in Lemma \ref{l:linfty-decay}  shows that for each $t_0>0$, this solution satisfies the (spatio-temporal) modulus $\rho_{t_0}$ on $(t_0,\infty)\times\RR^2$ as well.  Moreover, the result in fact holds for any distributional solution which is  a locally uniform limit of smooth solutions with drifts converging to $u$ in $L^1_{\rm loc}$. 
In particular, if $u$ is smooth, the (unique) solution of the Cauchy problem satisfies the modulus of continuity \eqref{e:MOC}, which depends on the drift via the super-critical norm $\|u\|_{L^1_{\rm loc}}$ only.  

An analogous result in the elliptic case  was proved in \cite{SSSZ12}: the elliptic maximum principle plus an a priori estimate in $H^1$, which hold for solutions of the  PDE, suffice to show that a function has a logarithmic modulus of continuity. {This idea can, in fact, be traced back to Lebesgue~\cite{Lebesgue1907}!} 

In the parabolic setting the situation is somewhat different. The parabolic maximum principle plus the energy estimates do not suffice to show the continuity of the solution. The following example illustrates the difficulty: if $\theta(t,x) = \varphi(x)$ for some $\varphi \in H^1(\RR^2) \setminus C(\RR^2)$, then $\theta \in C^\infty(H^1)$ and it satisfies the parabolic maximum principle, without being continuous. In order to overcome this difficulty, we need to use the equation to prove that for each time $t$, the elliptic maximum principle holds modulo an error that we can control (see Lemma \ref{l:elliptic-max}).  
A crucial ingredient will also be that $\partial_t\theta$ solves the same equation as $\theta$, which allows for some important bounds on this quantity (see Lemmas \ref{l:linfty-decay} and \ref{l:energy-ineq}).  Of course, this only holds when $u$ is time-independent.

Inspired by~\cite{DKSV12}, we also explore a {\em slightly supercritical} equation. This is the case in which the fractional Laplacian is replaced by an integral kernel which is logarithmically supercritical. 

\begin{theorem}[\bf A slightly supercritical case]\label{t:intro5}
Let $m:\RR^+ \to \RR^+$ be a smooth non-increasing function such that
\begin{align}
\int_0^\infty  \frac{m(r)}{1+r} \dd r < \infty \label{e:m}
\end{align}
and $rm(r)$ is non-decreasing on $(0,1)$.  
There exist a positive time $T$ and a smooth function $\theta_0$ with $\|\theta_0\|_{C^2(\RR^2)}\le 1$ such that for any modulus of continuity $\rho$, there exists a smooth divergence-free time-independent vector field $u$ with $\|u\|_{L^\infty(\RR^2)}\le 1$ such that the smooth solution of 
\begin{align}
\partial_t \theta + u \cdot \grad \theta + P.V. \int_{\RR^2} \Big(\theta(x) - \theta(x+y)\Big) \frac{m(|y|)}{|y|^2} \dd y &= 0 \label{e:nonlocal}
\end{align}
with initial condition $\theta_0$ 
 breaks the modulus  $\rho$ before time $T$.
\end{theorem}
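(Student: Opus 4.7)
My plan is to adapt the squeezing counterexample behind Theorems~\ref{t:intro1}--\ref{t:intro3} to the integro-differential operator $L\theta(x)=\mathrm{P.V.}\int_{\RR^{2}}(\theta(x)-\theta(x+y))m(|y|)|y|^{-2}\dd y$, with the quantitative competition between drift and dissipation now governed by the Dini-type condition \eqref{e:m} (and the monotonicity of $rm(r)$ on $(0,1)$) rather than by an algebraic scaling exponent. Heuristically, \eqref{e:m} makes $L$ strictly weaker than $(-\lap)^{1/2}$, for which $L^{\infty}$ drift is exactly critical, so the drift should win.

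First, I would fix universal data: take $T=3$ and a smooth, compactly supported $\theta_{0}$ with $\|\theta_{0}\|_{C^{2}}\le 1$, depending only on $x_{1}$ on $\{|x|\le 10\}$, and satisfying $\theta_{0}(1,0)\ge 3/4$ and $\theta_{0}(-1,0)\le -3/4$. Given a target modulus $\rho$, pick $r=r_{\rho}\in(0,1)$ with $\rho(r)<1/4$, and construct the drift $u=u^{\rho}=\grad^{\perp}\psi^{\rho}$ from a stream function $\psi^{\rho}$ designed so that (i) $u$ is smooth, divergence-free, with $\|u\|_{L^{\infty}(\RR^{2})}\le 1$; (ii) the forward trajectories starting at $(\pm 1,0)$ reach $X_{\pm}(T)$ with $|X_{+}(T)-X_{-}(T)|<r$; and (iii) $u$ vanishes on $\{|x|\le r/8\}$ to preserve smoothness. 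The template is a hyperbolic stagnation flow $u\approx(-Mx_{1},Mx_{2})$ on an intermediate annulus, with $M=M(\rho)$ large enough; the $L^{\infty}$-cap is compatible with (ii) because trajectories need only traverse a bounded distance during a bounded time.

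The decisive step is to prove $\theta(T,X_{+}(T))-\theta(T,X_{-}(T))\ge 1/2>\rho(r)$. Writing the Lagrangian identity
\begin{equation*}
\theta(T,X_{\pm}(T))=\theta_{0}(\pm 1,0)-\int_{0}^{T}L\theta(t,X_{\pm}(t))\dd t,
\end{equation*}
the task reduces to showing $\left|\int_{0}^{T}\big[L\theta(t,X_{+}(t))-L\theta(t,X_{-}(t))\big]\dd t\right|\le 1$. I would bound $L\theta$ via its symmetric form $L\theta(x)=-\tfrac{1}{2}\int(\theta(x+y)+\theta(x-y)-2\theta(x))m(|y|)|y|^{-2}\dd y$, splitting at a scale $r_{0}(t)$ to get
\begin{equation*}
|L\theta(t,x)|\le C\|D^{2}\theta(t,\cdot)\|_{L^{\infty}(B_{r_{0}}(x))}\int_{0}^{r_{0}}r\,m(r)\dd r+C\|\theta(t,\cdot)\|_{L^{\infty}}\int_{r_{0}}^{\infty}\frac{m(r)}{r}\dd r,
\end{equation*}
where the tail is controlled by \eqref{e:m} and the small-scale integral by $r_{0}^{2}m(r_{0})$ thanks to the monotonicity of $rm(r)$ on $(0,1)$.

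The main obstacle is that $\|D^{2}\theta\|_{L^{\infty}}$ can blow up in $M$ as the drift sharpens the profile, so the naive bound above degrades with $\rho$. I would circumvent this by bounding $L\theta$ not by the Hessian but by the oscillation $\omega_{\theta}(t,r):=\sup_{|x-y|\le r}|\theta(t,x)-\theta(t,y)|$, via $|L\theta(t,x)|\le C\int_{0}^{\infty}\omega_{\theta}(t,r)m(r)r^{-1}\dd r$, and then propagating a tailored modulus $\omega_{t}$ along the equation in the spirit of Kiselev--Nazarov--Volberg~\cite{KiselevNazarovVolberg07}. The crux is that \eqref{e:m}, together with the $L^{\infty}$-cap on $u$ and the monotonicity of $rm(r)$, forces $\int_{0}^{\infty}\omega_{t}(r)m(r)r^{-1}\dd r$ to remain bounded uniformly in $M$: this is the precise form in which slight supercriticality beats the drift. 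Closing this modulus-of-continuity argument is the technical heart of the proof; once it is in place, the values at $X_{\pm}(T)$ differ by more than $\rho(|X_{+}(T)-X_{-}(T)|)$, breaking $\rho$ before time $T$.
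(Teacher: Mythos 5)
Your squeezing template is the right mechanism, and you have correctly identified that the fight is between the (divergence-free, $L^\infty$-bounded) drift pulling two reference points together and the dissipation $\EL$ smearing the difference; the Dini condition \eqref{e:m} plus monotonicity of $rm(r)$ must make the dissipation's time-integral converge. However, the ``decisive step'' as you propose to carry it out has a real gap, and it is instructive to see why the paper avoids it.

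You want to bound $\int_0^T\bigl|L\theta(t,X_\pm(t))\bigr|\,\dd t$ by propagating an oscillation bound $\omega_t$ and estimating $|L\theta|\le C\int_0^\infty \omega_t(r)m(r)r^{-1}\dd r$. Two problems. First, this is in tension with the goal: to break a modulus at scale $r_\rho\to 0$ you need $\omega_T(r_\rho)\gtrsim 1$, so the only admissible candidates are Lipschitz-type profiles $\omega_t(r)\approx\min\{r/z(t),1\}$ that saturate at the squeezing scale $z(t)$. Second, and more seriously, plugging such a profile into the dissipation bound and integrating in time (using $\dot z\approx -1$, i.e.\ $\dd t \approx -\dd z$) gives
\begin{equation*}
\int_0^1\!\!\left(\frac 1a\int_0^a m(r)\,\dd r + \int_a^1\frac{m(r)}{r}\dd r\right)\dd a
= \int_0^1 m(r)\log\frac1r\,\dd r + \int_0^1 m(r)\,\dd r ,
\end{equation*}
by Fubini, and the first term can be \emph{infinite} under \eqref{e:m} alone: take $m(r)\sim r^{-1}(\log\tfrac1r)^{-2}$ near $0$, which satisfies \eqref{e:m} and the monotonicity of $rm(r)$ but has $\int_0^1 m(r)\log\tfrac1r\,\dd r=\infty$. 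So an oscillation/Lipschitz bound on the small-scale contribution to $L\theta$ does not close. There is also the more basic issue that you are trying to control $L\theta$ for the \emph{actual} solution $\theta$, whose regularity along the squeeze is not known a priori; ``propagating a modulus'' in the KNV sense is designed to prove regularity, not to quantify its partial loss, and the argument becomes circular.

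The paper sidesteps both problems by never touching the real solution's dissipation at all. It works with an explicit odd-in-$x_2$ \emph{subsolution}: a $C^2$ self-similar bump $\phi_a(x)=\phi(x/a)$ transported toward the origin, multiplied by a decaying prefactor $\exp(-\int_0^t H(z(\tau))\dd\tau)$. For such a bump the small-scale piece of $\EL\phi_a$ is controlled by $\|D^2\phi_a\|_\infty\sim a^{-2}$, so the dissipation constant is the paper's
\begin{equation*}
H(a)= c_0\left(\frac{1}{a^2}\int_0^a r\, m(r)\,\dd r + \int_a^1 \frac{m(r)}{r}\dd r +1\right),
\end{equation*}
and now $\int_0^1 \frac{1}{a^2}\int_0^a rm(r)\,\dd r\,\dd a\le\int_0^1 m(r)\,\dd r<\infty$ by Fubini, which is exactly \eqref{e:m}. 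The extra power of $r$ coming from the quadratic (rather than Lipschitz) smallness of the bump's increments is precisely what makes the time-integral finite, and the comparison principle (Lemma~\ref{lemma:comparison}, applied to $\theta-\bar\theta$) then transfers the surviving oscillation from the subsolution to the actual solution without ever needing to estimate $L\theta$. If you want to salvage your trajectory-based argument, you would have to replace the oscillation bound by a genuinely second-order estimate on $\theta$ along the squeeze, and there is no obvious a priori way to get one; the barrier construction is really the mechanism that makes the Dini condition the sharp one.
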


This result suggests that in order to hope for continuity of solutions to \eqref{e:nonlocal}, one should not depart from the critical case $m(r) = 1/r$  by ``more than a logarithm''. It would be interesting to show that for generic divergence-free $L^\infty$ drifts, solutions to \eqref{e:nonlocal} are continuous if the integral in \eqref{e:m} diverges. In fact, for the dissipative Burgers equations it was shown in \cite{DKSV12} that when \eqref{e:m} holds, shocks develop in finite time, while if the integral in \eqref{e:m} diverges, global regularity holds.

The article is organized as follows. In Section~\ref{sec:Linfty} we present informally the main idea behind the loss continuity in finite time of solutions to  \eqref{evo} (for simplicity we take $u \in L^\infty$ and $s < 1/2$). Section~\ref{sec:Holder} contains the  proofs of Theorems~\ref{t:intro1} and \ref{t:intro2}, while Theorem~\ref{t:intro3} is proven in Section~\ref{sec:laplacian}. Sections~\ref{sec:trajectories} and \ref{sec:regularity} contain the proofs of the positive results in this paper, both for divergence-free time-independent drifts: uniqueness of particle trajectories for the transport equation with a continuous drift (Theorem~\ref{lemma:unique:X})
and Theorem~\ref{t:intro4}. We conclude  by proving Theorem~\ref{t:intro5} in Section~\ref{sec:barely}.

LS was supported in part by NSF grants DMS-1001629 and DMS-1065979, and an Alfred P. Sloan Research Fellowship.  AZ was supported in part by NSF grants DMS-1056327, DMS-1113017, DMS-1147523, and DMS-1159133, and an Alfred P. Sloan Research Fellowship. VV was supported in part by NSF grant DMS-1211828.

\section{Loss of regularity for bounded drift and supercritical dissipation} \label{sec:Linfty}

In this section we give a brief outline of our method of proving loss of continuity of solutions to drift-diffusion equations. In order to emphasize the main ideas, we present here the simplest case: the time-independent divergence-free drift is bounded and dissipation is super-critical with respect to the natural scaling of the equations (i.e., $\alpha=0$ and $s \in (0,1/2)$).  
For the sake of simplicity of exposition, we will not require here the drift to be smooth and simply {\it assume that we have a unique solution to \eqref{evo}--\eqref{evo:IC}}.  In the rigorous treatment in Section~\ref{sec:Holder}, we shall consider a smooth version of the drift defined 
below (and its generalization to other $\alpha$) so that we need not worry about global existence and regularity of solutions.

Define the stream function 
\[\psi(x_1,x_2) = \frac{|x_1-x_2| - |x_1+x_2|}{2}\]
and let $u = \nabla^\perp \psi$, where $\nabla^{\perp} = (-\partial_{x_2},\partial_{x_1})$. Then $u \in L^{\infty}(\RR^{2})$ is divergence-free (in the sense of distributions) and may be written explicitly as
\begin{figure}[htb!]
\begin{minipage}{0.5\linewidth}
\begin{align*}
u(x_1,x_2) = \begin{cases}
(0,-1) & \mbox{if } x_2>|x_1|,\\
(0,1) & \mbox{if } x_2<-|x_1|,\\
(1,0) & \mbox{if } x_1>|x_2|,\\
(-1,0) & \mbox{if } x_1<-|x_2|.
\end{cases} 
\end{align*}
\end{minipage}
\begin{minipage}{0.4\linewidth}
\includegraphics[scale=0.25]{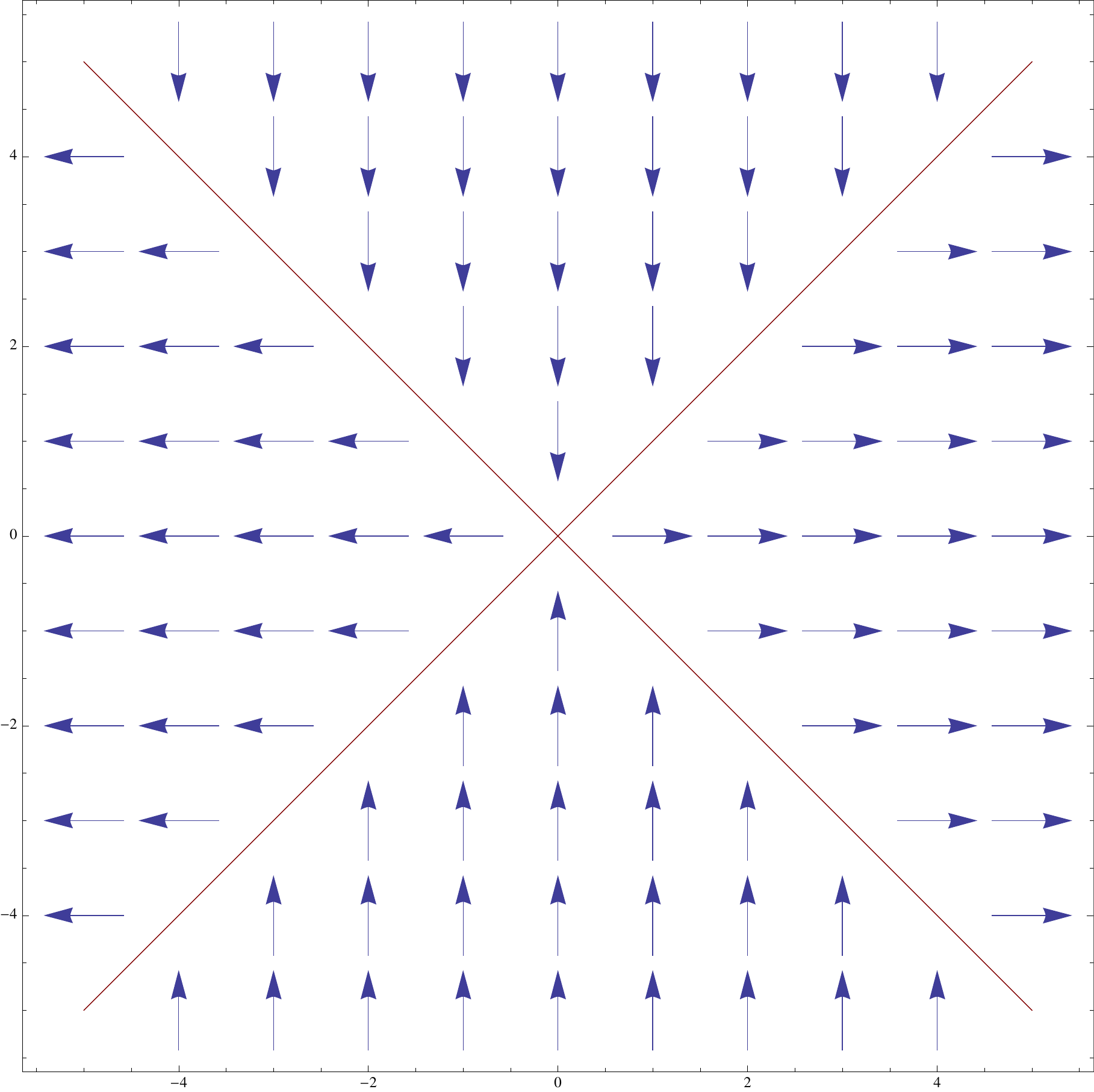}
\end{minipage}
\end{figure}
Let us consider \eqref{evo}--\eqref{evo:IC} with this drift, $s \in (0,1/2)$, and some smooth initial datum $\theta_0$.

\begin{theorem}[\bf Loss of continuity in the supercritical regime] \label{thm:L:infinity}
Let $s\in (0,1/2)$ and let $\rho$ be any modulus of continuity.
If $\theta_0$ is non-negative in the upper half-plane, larger than $1$ in $B_{1/2}(0,1)$, and odd in $x_2$, then the solution  of \eqref{evo}--\eqref{evo:IC} breaks the modulus  $\rho$ at some time $t_* \leq 1$. 
\end{theorem}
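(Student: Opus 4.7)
The plan is to combine the odd-in-$x_{2}$ symmetry of the problem with the fact that the drift rigidly transports the upper bump of $\theta_{0}$ straight down the $x_{2}$-axis toward the origin in time $1$: since $\theta$ must vanish identically on the $x_{1}$-axis by symmetry, any nontrivial bump arriving arbitrarily close to the origin will force an arbitrarily bad modulus. First I would verify the symmetry. The drift satisfies $u_{1}(x_{1},-x_{2}) = u_{1}(x_{1}, x_{2})$ and $u_{2}(x_{1}, -x_{2}) = -u_{2}(x_{1}, x_{2})$, so $(t,x_{1},x_{2}) \mapsto -\theta(t, x_{1}, -x_{2})$ solves the same initial value problem as $\theta$ whenever $\theta_{0}$ is odd in $x_{2}$; the assumed uniqueness therefore forces $\theta(t,\cdot)$ to be odd in $x_{2}$, so in particular $\theta(t, x_{1}, 0) \equiv 0$.

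Next, I would track $\theta$ along the descending characteristic $\gamma(t) := (0, 1-t)$ for $t \in [0,1)$. Because $\gamma(t)$ stays strictly inside the cone $\{x_{2}>|x_{1}|\}$ where $u \equiv (0,-1) = \dot\gamma(t)$, the equation gives $\frac{d}{dt}\theta(t, \gamma(t)) = -(-\Delta)^{s}\theta(t, \gamma(t))$, and hence
\[
\theta(t, \gamma(t)) = \theta_{0}(0,1) - \int_{0}^{t} (-\Delta)^{s}\theta(s, \gamma(s))\, ds \geq 1 - \int_{0}^{t} (-\Delta)^{s}\theta(s, \gamma(s))\, ds.
\]
The heart of the argument is then to establish the existence of some fixed $c>0$, depending on $\theta_{0}$ and $u$ only, such that $\theta(t, \gamma(t)) \geq c$ for every $t \in [0,1)$. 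Heuristically this comes from super-criticality $s < 1/2$: the natural smoothing length-scale $t^{1/(2s)}$ is of order $1$ on $[0,1]$, too coarse to eliminate a unit-amplitude bump descending along $\gamma$, and the smoothness of $\theta_{0}$ together with interior parabolic regularity of $\theta$ away from the singular axes of $u$ provide a uniform pointwise upper bound on $(-\Delta)^{s}\theta(s, \gamma(s))$ that keeps the integral strictly below $1$.

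Finally, given any modulus $\rho$, I would pick $\eps > 0$ small enough that $\rho(\eps) < c$ and take $t_{*} := 1 - \eps \leq 1$. Then $\gamma(t_{*}) = (0,\eps)$ and, by the lower bound above together with the symmetry $\theta(t_{*},0,0) = 0$,
\[
|\theta(t_{*}, (0,\eps)) - \theta(t_{*}, (0,0))| = \theta(t_{*}, \gamma(t_{*})) \geq c > \rho(\eps),
\]
so $\theta$ breaks $\rho$ at $t_{*} \leq 1$. The hard part is the lower bound of the previous paragraph; it is here that the super-critical character of $(-\Delta)^{s}$ and the scale-invariance of the drift must interact to prevent the diffusion from destroying the descending bump. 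Since this section is only a conceptual outline (and assumes uniqueness without specifying a regularity framework), I would defer the precise pointwise control of $(-\Delta)^{s}\theta(s, \gamma(s))$ to the rigorous construction in Section~\ref{sec:Holder}, where $u$ is replaced by a smooth compactly supported variant and classical parabolic theory applies.
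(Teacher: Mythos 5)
Your outline identifies the correct ingredients (oddness forcing $\theta(t,x_1,0)\equiv 0$, the downward characteristic $\gamma(t)=(0,1-t)$, and the need for a lower bound on $\theta(t,\gamma(t))$), but it leaves a genuine gap precisely at the step you call ``the heart of the argument,'' and the heuristic you offer to fill it is not just unproven but points in the wrong direction.

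You claim that smoothness of $\theta_0$ and interior parabolic regularity yield a \emph{uniform} pointwise bound on $(-\Delta)^s\theta(t,\gamma(t))$ for $t\in[0,1)$. This cannot be right as stated: the theorem is being proved precisely because the solution develops wild oscillations near the origin, which is where $\gamma(t)$ is headed, so there is no reason to expect $(-\Delta)^s\theta$ to remain bounded there. Moreover, a uniform bound on the dissipation term would make the integral $\int_0^t(-\Delta)^s\theta(\tau,\gamma(\tau))\,d\tau$ finite for \emph{every} $s\in(0,1)$, so your argument would give the same conclusion for $s\ge 1/2$, where the theorem is false. The actual mechanism is that the dissipation seen by a bump centered at $\gamma(t)$ and of radius $\sim z(t)=1-t$ scales like $z(t)^{-2s}$, which \emph{blows up} as $t\to1$, and the entire point of the supercriticality assumption $s<1/2$ is that $\int_0^1(1-\tau)^{-2s}\,d\tau<\infty$. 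Your heuristic ``smoothing length scale $t^{1/(2s)}$ is of order $1$'' does not capture this shrinking-bump rescaling.

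What the paper does, and what your proposal is missing, is to replace the actual solution by an explicit odd subsolution
$\bar\theta(t,x)=\exp\bigl(-c_{r,s}\int_0^t z(\tau)^{-2s}\,d\tau\bigr)\,\phi(x/z(t))$,
where $\phi$ is a fixed bump profile supported in $B_r(0,1)\cup B_r(0,-1)$. The key lemma (Lemma~\ref{lemma:dissipation}) shows $(-\Delta)^s\phi\le c_{r,s}\phi$ on the upper half-plane, and scaling gives $(-\Delta)^s\phi(\cdot/z)\le c_{r,s}z^{-2s}\phi(\cdot/z)$. Because the bump shrinks at exactly the rate at which it is advected toward the origin, the transport term works in one's favor (this is the computation culminating in \eqref{1.30}), and the exponential prefactor exactly compensates for the worst-case dissipation. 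The comparison principle for odd functions in the half-plane (which is itself nontrivial for $(-\Delta)^s$ and requires the reflection argument in Lemma~\ref{lemma:comparison}) then gives $\theta\ge\bar\theta$ on $\{x_2>0\}$, and $\bar\theta(t,\gamma(t))\ge\exp(-c_{r,s}/(1-2s))>0$ uniformly in $t<1$. Tracking the ODE along $\gamma$ for the true solution, as you propose, does not circumvent the need for this barrier construction; you would still have to produce the a priori control on $(-\Delta)^s\theta(t,\gamma(t))$ that the subsolution provides for free, and ``deferring to Section~\ref{sec:Holder}'' is not an option since Section~\ref{sec:Holder} relies on exactly this barrier argument, not on the characteristic-tracking one.
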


Since $\rho$ was arbitrary, it follows that solution  loses continuity in finite time, which may be viewed as a blow-up in $L^\infty((0,1);C(\RR^2))$.

Before proving Theorem~\ref{thm:L:infinity}, let us observe two key properties of the system \eqref{evo}--\eqref{evo:IC} (again, these hold in the smooth case and we {\it assume them to hold in the case at hand} as well).

\smallskip
\noindent{\bf Symmetry:} If  $\theta_0$ is odd in $x_2$, 
then so is the solution $\theta(t,\cdot)$ for $t>0$ (because $u(Rx)=Ru(x)$, where $R(x_1,x_2)=(x_1,-x_2)$).
\smallskip

\noindent{\bf Comparison principle:} If $\bar\theta$ is odd in $x_2$ and a super-solution of \eqref{evo} on the upper half-plane (and so a sub-solution on the lower half-plane) with  $\sgn(x_2)\bar\theta(0,x)\ge 0$, then for $t>0$ we have $\sgn(x_2)\bar \theta(t,x) \ge 0$. 
\smallskip

With these in mind, the intuition behind the proof of Theorem~\ref{thm:L:infinity} is as follows. Instead of \eqref{evo}--\eqref{evo:IC}, consider a pure transport equation with drift $u$, no dissipation, and odd-in-$x_2$ initial condition satisfying $\theta_0 \ge \chi_{B_{1/2}(0,1)}$ on the upper half-plane. Then the function 
\[ 
\bar \theta(t,\cdot) = \chi_{B_{(1-t)/2}(0,1-t)} - \chi_{B_{(1-t)/2}(0,t-1)}
\]
is a sub-solution, because the radius of the two discs forming its support decreases at the same rate as they approache the origin, so that the support stays in the set $|x_2|>|x_1|$.  So if $\theta$ is the actual solution, then $\theta-\bar\theta$ is an odd-in-$x_2$ super-solution on the upper half-plane.  The comparison principle (which also holds for the transport PDE) now shows that the oscillation of $\theta$ on $B_{1-t}(0,0)$, is at least $2$, so that any modulus is broken before time $t=1$.  Adding now  dissipation will decrease the supremum of $\theta(t,\cdot)$ on $B_{(1-t)/2}(0,1-t)$.  However, as the proof below shows,  the latter will stay bounded away from zero on any finite time interval as long as the equation is supercritical. 

\begin{proof}[Proof of Theorem~\ref{thm:L:infinity}]
Let $\eta\in C^\infty(\RR)$ be an even function, {supported on $[-1,1]$, positive on $(-1,1)$}, non-increasing on $\RR^+$, and with $\eta(0)=1$.   For some $r \in (0,1/\sqrt 2)$, we let
\begin{align*} 
\phi(x_1,x_2) = \eta(|(x_1,x_2-1)| r^{-1}) - \eta (|(x_1,x_2+1)| r^{-1}),
\end{align*}
so that $\phi$ consists of a positive smooth bump centered at $(0,1)$ and with radius $r$, and a similar negative bump centered at $(0,-1)$.  Then for any $s \in (0,1/2)$ there is $c_{r,s}>0$ such that
\begin{align} 
(-\Delta)^s \phi(x_1,x_2) \leq c_{r,s} \phi(x_1,x_2) \label{eq:sub:eig:bdd}
\end{align}
holds in the upper half-plane $\{ x_2 > 0\}$ (the proof of this is given in Lemma~\ref{lemma:dissipation} below).

We now let $\theta_0=\phi$ and 
\begin{align} 
\bar \theta (t,x_1,x_2) = \exp\left(- c_{r,s} \int_0^t  z(\tau)^{-2s} \dd \tau \right) \phi\left( \frac{x_1}{z(t)}, \frac{x_2}{z(t)}\right) \label{eq:barrier:bdd}
\end{align}
for $t\in[0,1)$, where 
\begin{align} 
z(t) = 1- t. \label{eq:z:bdd}
\end{align}
Notice that $z$ is the solution of the ODE
\[
\dot z(t) = u_2(0,z(t)) =-1, \qquad z(0)=1,
\]
which is the position of the original center $(0,1)$ of the positive bump, transported by the drift $u$.
It is clear that $\bar \theta \geq 0$ for $x_2>0$ and is odd in $x_2$.  The support of $\bar \theta(t,\cdot)$ consists of two discs whose centers $(0,\pm z(t))$ are transported towards the origin with the drift $u$, and have radii $r  z (t)$. That is, the support shrinks at the same rate it approaches the origin (and lies in the set $|x_2|>|x_1|$ because $r<1/\sqrt 2$).

We now claim that $\bar\theta$ is a sub-solution of \eqref{evo} in $\{ x_2>0\}$. By scaling and \eqref{eq:sub:eig:bdd} we have
\begin{align*} 
(-\Delta)^s \bar \theta(t,\cdot) \leq c_{r,s} z(t)^{-2s} \bar \theta(t,\cdot)
\end{align*}
in $\{x_2>0\}$, so it suffices to prove
\begin{align} 
\partial_t \bar\theta + u \cdot \nabla \bar\theta +  c_{r,s} z(t)^{-2s} \bar \theta \leq 0\label{eq:transport:sub:bdd}
\end{align} 
in $\{x_2>0\}$.
From the definition of $\bar \theta$, it is clear that we just need to verify
\[
\partial_t \phi_+ + u \cdot \nabla \phi_+ \leq 0, \qquad \mbox{where} \qquad \phi_+(t,x_1,x_2)=  \eta \left(\frac{|(x_1,x_2-z(t))|}  {r z(t)} \right).
\]

The point now is that $\phi_+$ is transported by the vector field $v(t,x)=-x/z(t)$, that is, it satisfies
\[
\partial_t \phi_+ + v \cdot \nabla \phi_+ = 0,
\]
as one can see by a simple computation. 
We therefore only need to show $(v-u)\cdot \nabla \phi_+\ge 0$ which, due to $\phi_+$ being supported in $x_2>|x_1|$, radially symmetric, and radially non-increasing with respect to center $(0,z(t))$, is equivalent to
\begin{equation} \label{1.30}
0 \leq  ( (x_1,x_2) z(t)^{-1} +  (0,-1) )  \cdot (x_1,x_2-z(t)) = z(t)^{-1} |(x_{1}, x_2 - z(t))|^2
\end{equation}
on the support of $\phi_+$. This clearly holds, so $\bar\theta$ is a sub-solution of \eqref{evo} in $\{x_2>0\}$.

To conclude the proof, note that by the comparison principle, \eqref{eq:z:bdd} and \eqref{eq:barrier:bdd}, we have
\begin{align*} 
\osc_{B_{ z(t)}(0)}\theta(t,\cdot)  \geq \osc_{B_{ z(t)}(0)} \bar \theta(t,\cdot) = 2 \exp\left(-  c_{r,s} \int_0^t (1-\tau)^{-2s} \dd \tau \right) \geq 2 \exp \left( - \frac{ c_{r,s}}{1-2s}\right) .
\end{align*}
if $s \in (0,1/2)$ and $t\in [0,1)$. Thus, if the solution $\theta$ of \eqref{evo}--\eqref{evo:IC} obeyed the modulus of continuity $\rho$ for all $t\in[0,1)$, then we would have $\rho(0)=\lim_{t\to 1} \rho(2-2t) >0$,  a contradiction.
\end{proof}

\section{The case \texorpdfstring{$s\in(0,1)$}{0<s<1} with supercritical drift} \label{sec:Holder}

The main ideas for  finite time loss of continuity of solutions to \eqref{evo}--\eqref{evo:IC} in the case of bounded drifts and $s<1/2$ were presented in Section~\ref{sec:Linfty}. Here we extend those arguments to treat all values of $s \in (0,1)$ and corresponding  supercritical drifts. For $\alpha \in (-1,1)$ we denote
\begin{align}
X^\alpha:= \begin{cases}
C^\alpha(\RR^2) & \mbox{for } \alpha \in (0,1),\\
L^\infty(\RR^2) & \mbox{for } \alpha = 0,\\
L^{2/|\alpha|}(\RR^{2}) & \mbox{for }\alpha \in (-1,0). 
\end{cases} \label{eq:Xalpha}
\end{align}
In view of the natural scaling of the equations, for any $\alpha \in (-1,1-2s)$ the above  Banach space $X_\alpha$ is supercritical.

\begin{theorem}[\bf Finite time blow-up in supercritical regime] \label{thm:Holder}
Let $s \in (0,1)$, $\alpha' \in (-1,1-2s)$, and let $\rho$ be any modulus of continuity.  Then there exist a smooth divergence-free time-independent vector field $u$ with $\|u\|_{X^{\alpha'}}\le 1$ and a smooth function $\theta_0$ with $\|\theta_0\|_{C^2(\RR^2)}\le 1$ such that the smooth solution of \eqref{evo}--\eqref{evo:IC} breaks the modulus  $\rho$ in finite time, bounded above independently of $\rho$.
\end{theorem}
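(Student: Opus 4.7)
The plan is to adapt the scheme of Theorem~\ref{thm:L:infinity} by replacing the bounded piecewise-constant drift with one which is approximately homogeneous of degree $\alpha$ near the origin, for some $\alpha\in(\alpha',1-2s)$ (nonempty by hypothesis). The flow will then drive bumps toward the origin along a trajectory $\dot z=-cz^{\alpha}$ reaching the origin in finite time (as $\alpha<1$), while the dissipative amplitude factor stays uniformly bounded below precisely because $\alpha<1-2s$.

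First, I would fix $\alpha\in(\alpha',1-2s)$ and introduce a compactly supported stream function $\psi^{*}$ equal, in the upper sector $\{x_{2}>|x_{1}|\}$ near the origin, to $-c\,x_{1} x_{2}^{\alpha}$ for a small $c>0$. This is homogeneous of degree $1+\alpha$, yielding
\begin{align*}
u^{*}(x_{1},x_{2})=\grad^{\perp}\psi^{*}(x_{1},x_{2})=\bigl(c\alpha\,x_{1} x_{2}^{\alpha-1},\,-c x_{2}^{\alpha}\bigr)
\end{align*}
in that sector, which is divergence free and satisfies $u^{*}_{2}(0,z)=-cz^{\alpha}$. Reflecting symmetrically into the other three sectors produces an $x_{2}$-odd vector field with the hyperbolic pattern of Section~\ref{sec:Linfty}. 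Checking $u^{*}\in X^{\alpha'}$: for $\alpha\geq 0$ it is bounded and lies in $C^{\alpha}\subset X^{\alpha'}$, while for $\alpha<0$ the pointwise size $|u^{*}(x)|\lesssim |x|^{\alpha}$ gives $u^{*}\in L^{p}$ for every $p<2/|\alpha|$, in particular for $p=2/|\alpha'|$ (since $|\alpha|<|\alpha'|$). After rescaling, $\|u^{*}\|_{X^{\alpha'}}\leq 1$. Given a modulus $\rho$, I would then smoothly extend $u^{*}$ inside a sufficiently small ball $B_{\eps}(0)$, by modifying $\psi^{*}$ there while preserving the divergence-free condition, producing a smooth drift $u=u_{\rho}$ with only a negligible change in the $X^{\alpha'}$ norm.

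Next, following Section~\ref{sec:Linfty}, I would take $\theta_{0}=\phi$ the odd double-bump and define
\begin{align*}
\bar\theta(t,x)=\exp\Bigl(-c_{r,s}\int_{0}^{t}z(\tau)^{-2s}\dd\tau\Bigr)\phi\Bigl(\tfrac{x}{z(t)}\Bigr),
\end{align*}
with $z$ solving $\dot z=u^{*}_{2}(0,z)=-cz^{\alpha}$, $z(0)=1$, so $z(t)=(c(1-\alpha)(T-t))^{1/(1-\alpha)}$ for an explicit finite $T$. Verifying that $\bar\theta$ is a sub-solution of \eqref{evo} in $\{x_{2}>0\}$ reduces, via the same factorization as in Section~\ref{sec:Linfty}, to the pointwise inequality $(u^{*}-v)\cdot(x-(0,z))\geq 0$ on the support of the positive bump, where $v(t,x)=x\dot z(t)/z(t)$ is the vector field exactly transporting $\phi(x/z(t))$. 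Substituting the explicit $u^{*}$ above and writing $x_{1}=z\xi$, $x_{2}=z(1+\eta)$ with $|\xi|,|\eta|\leq r$, the left-hand side becomes
\begin{align*}
cz^{\alpha+1}\bigl((1+\alpha)\xi^{2}+(1-\alpha)\eta^{2}+O(r^{3})\bigr),
\end{align*}
which is non-negative once $r$ is chosen small enough (depending only on $\alpha$). The crucial supercritical fact is
\begin{align*}
\int_{0}^{T}z(\tau)^{-2s}\dd\tau<\infty\quad\Longleftrightarrow\quad \alpha<1-2s,
\end{align*}
so the exponential prefactor stays bounded below by a constant $A_{*}>0$ depending only on $\alpha,s,r$, and independent of $\eps$ and $\rho$.

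Finally, because the modification of $u^{*}$ into $u_{\rho}$ is confined to $B_{\eps}$, $\bar\theta$ remains a sub-solution for $u_{\rho}$ as long as its support lies outside $B_{\eps}$, namely up to the time $t_{\eps}$ at which $z(t_{\eps})=\eps/(1-r)$. The comparison principle and odd symmetry then give $\osc_{B_{z(t)}(0)}\theta(t,\cdot)\geq 2A_{*}$ for all $t\leq t_{\eps}$, and since $2z(t_{\eps})\to 0$ as $\eps\to 0$ while $A_{*}$ is fixed, choosing $\eps$ small enough that $\rho(2\eps/(1-r))<2A_{*}$ breaks the modulus $\rho$ before the ($\rho$-independent) time $T$. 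I expect the main technical obstacle to be (i) carrying out the smooth divergence-free extension inside $B_{\eps}$ while keeping the global $X^{\alpha'}$ norm controlled in the $L^{p}$ case $\alpha<0$, where $\psi^{*}$ itself is unbounded near the origin, and (ii) establishing the fractional Laplacian bound $(-\Delta)^{s}\phi\leq c_{r,s}\phi$ in the upper half-plane, which in Section~\ref{sec:Linfty} is deferred to a dedicated lemma and should be reused here verbatim.
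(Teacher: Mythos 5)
Your proposal is correct and follows essentially the same architecture as the paper's proof: a nearly $(1+\alpha)$-homogeneous divergence-free stream function cut off near the origin, the same rescaled double-bump subsolution, the paper's dissipation lemma, the ODE $\dot z=-cz^\alpha$, and the observation that $\int_0^T z(\tau)^{-2s}\,d\tau<\infty$ precisely when $\alpha<1-2s$. The only substantive difference is cosmetic: you take $\psi^*\sim -x_1x_2^\alpha$, whereas the paper uses $\psi\sim\frac{|x_1-x_2|^{1+\alpha}-|x_1+x_2|^{1+\alpha}}{2(1+\alpha)}$, which agrees with yours to leading order in $x_1/x_2$ and yields the identical positive-definite Hessian $\bigl(2(1+\alpha),2(1-\alpha)\bigr)$ at $(0,1)$ in the subsolution inequality.
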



Notice that this result contains Theorems \ref{t:intro1} and \ref{t:intro2}.  It does not directly cover the case  $p\in[1,2]$ in Theorem \ref{t:intro2} but this follows from the case $p\in(2,2/(2s-1))$.  This is because our vector fields will be supported in $B_4(0,0)$ 
and because the bound $\|u\|_{X^{\alpha'}}\le 1$ can be replaced by $\|u\|_{X^{\alpha'}}\le c$ for any $c>0$ due to $M$ in \eqref{eq:ualpha} being arbitrarily large. 

Let us now turn to the construction of the drift $u$.  
For $\alpha \in (-1,1)$, define the stream function 
\begin{align}
\psi_{\alpha,s,\rho}(x_1,x_2) = \frac{|x_1-x_2|^{1+\alpha} - |x_1+x_2|^{1+\alpha}}{2(1+\alpha)}  \kappa\left( \frac{x_1}{x_2} \right) \mu_{\alpha,s,\rho} \left( | x|\right) 
\label{eq:stream}
\end{align}
where the last two factors are smooth cutoff functions ($\kappa$ is also even) designed to remove singularities at $|x_1|=|x_2|$ and $|x|=0$.  Specifically, $\kappa,\mu_{\alpha,s,\rho}\in C_0^\infty(\RR)$ are such that
\[
\chi_{[-1/2,1/2]} \le \kappa \le \chi_{[-2/3,2/3]}
\]  
and
\[ \chi_{ [2\eps_{\alpha,s,\rho},2]} \le \mu_{\alpha,s,\rho} \le \chi_{ [\eps_{\alpha,s,\rho},3]}, 
\]
with $\kappa$ even, $\eps_{\alpha,s,\rho}>0$ to chosen later, and also
\begin{equation} \label{eq:cutoff}
|\mu_{\alpha,s,\rho}'(z)| \le \frac 3{|z|}  
\end{equation}
(which is possible for any choice of $\eps_{\alpha,s,\rho}>0$).
Here $\kappa$ removes the singularities at $|x_1|=|x_2|$ (except the origin) but does not alter the fraction in \eqref{eq:stream} on the union of two cones given by $|x_1|\le \tfrac 12 |x_2|$.  The subsolution we will use, similar to that in \eqref{eq:barrier:bdd}, will be supported in this set, so $\kappa$ will not affect the argument.   Likewise, $\mu_{\alpha,s,\rho}$ removes the singularity at the origin and will make our drift compactly supported.  Since the subsolution will also be supported in the annulus $B_2(0)\setminus B_{2\eps_{\alpha,s,\rho}}(0)$ at all times until $\rho$ is broken, $\mu_{\alpha,s,\rho}$ will also not affect the argument from the previous section.  In fact, $\eps_{\alpha,s,\rho}$ will be chosen so that the modulus $\rho$ will be broken before the support of the subsolution reaches $B_{2\eps_{\alpha,s,\rho}}(0)$.

We now let, for some $M>0$,
\begin{align}
u_{\alpha,s,\rho} = \frac1M \nabla^\perp \psi_{\alpha,s,\rho} \label{eq:ualpha},
\end{align}
which is smooth and compactly supported because so is $\psi_{\alpha,s,\rho}$.  Then \eqref{eq:cutoff} ensures that for any given $\alpha\in[0,1)$ there is $M>0$ such that $\|u_{\alpha,s,\rho}\|_{X^\alpha}\le 1$ for all $s,\rho$, while for any given $\alpha\in(-1,0)$ and any $\alpha'\in(-1,\alpha)$, there is $M>0$ such that $\|u_{\alpha,s,\rho}\|_{X^{\alpha'}}\le 1$ for all $s,\rho$ (the latter because if $\alpha<0$, then $|\nabla^\perp\psi_{\alpha,s,\rho}(x)|\le C_\alpha|x|^\alpha$ with $C_\alpha$ independent of $s,\rho$).  If now $s\in(0,1)$ and $\alpha' \in (-1,1-2s)$ are given, we either pick $\alpha=\alpha'$ and the $M$ associated with $\alpha$ (if $\alpha'\in[0,1)$) or pick some $\alpha\in(\alpha',\min\{1-2s,0\})$ and the $M$ associated with $\alpha,\alpha'$ (if $\alpha'\in(-1,0)$).  In either case, the drift from \eqref{eq:ualpha} will satisfy $\|u_{\alpha,s,\rho}\|_{X^{\alpha'}}\le 1$ for any $\rho$ (and $M=M_{\alpha,s}$ is independet of $\rho$).

\begin{figure}[htb!]
\begin{minipage}{0.47\linewidth}
\begin{center}
\setlength{\unitlength}{1.5in} 
\begin{picture}(2.02083,2)
\put(0,0){\includegraphics[height=3in]{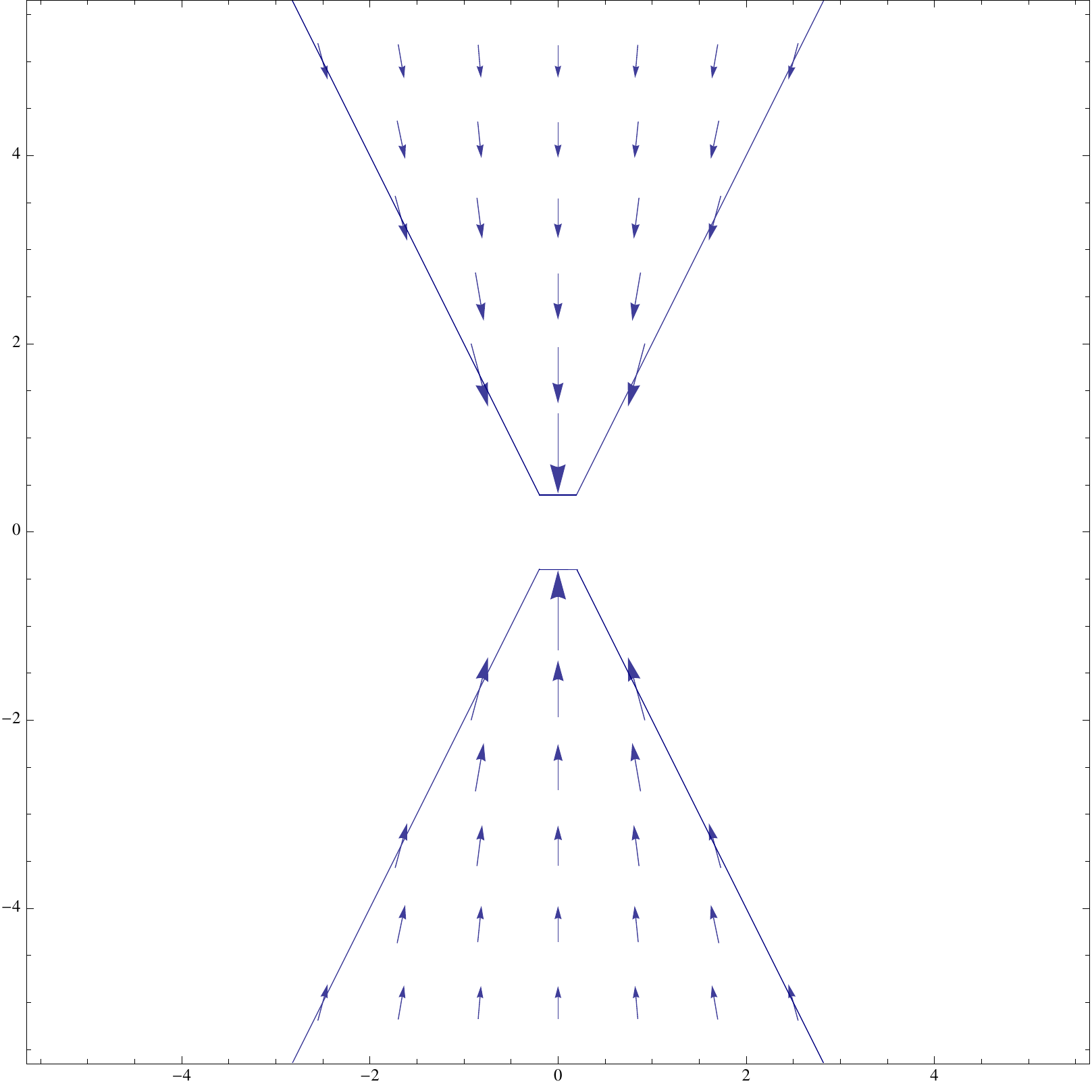}}
\put(1.7,0.4){\mbox{\rotatebox{90}{\tiny The values of $u$ are irrelevant here}}}
\put(0.3,1.6){\mbox{\rotatebox{270}{\tiny The values of $u$ are irrelevant here}}}
\end{picture}
\caption{Velocity field for $\alpha = -1/2$.}
\end{center}
\end{minipage}
\begin{minipage}{0.47\linewidth}
\begin{center}
\setlength{\unitlength}{1.5in} 
\begin{picture}(2.02083,2)
\put(0,0){\includegraphics[height=3in]{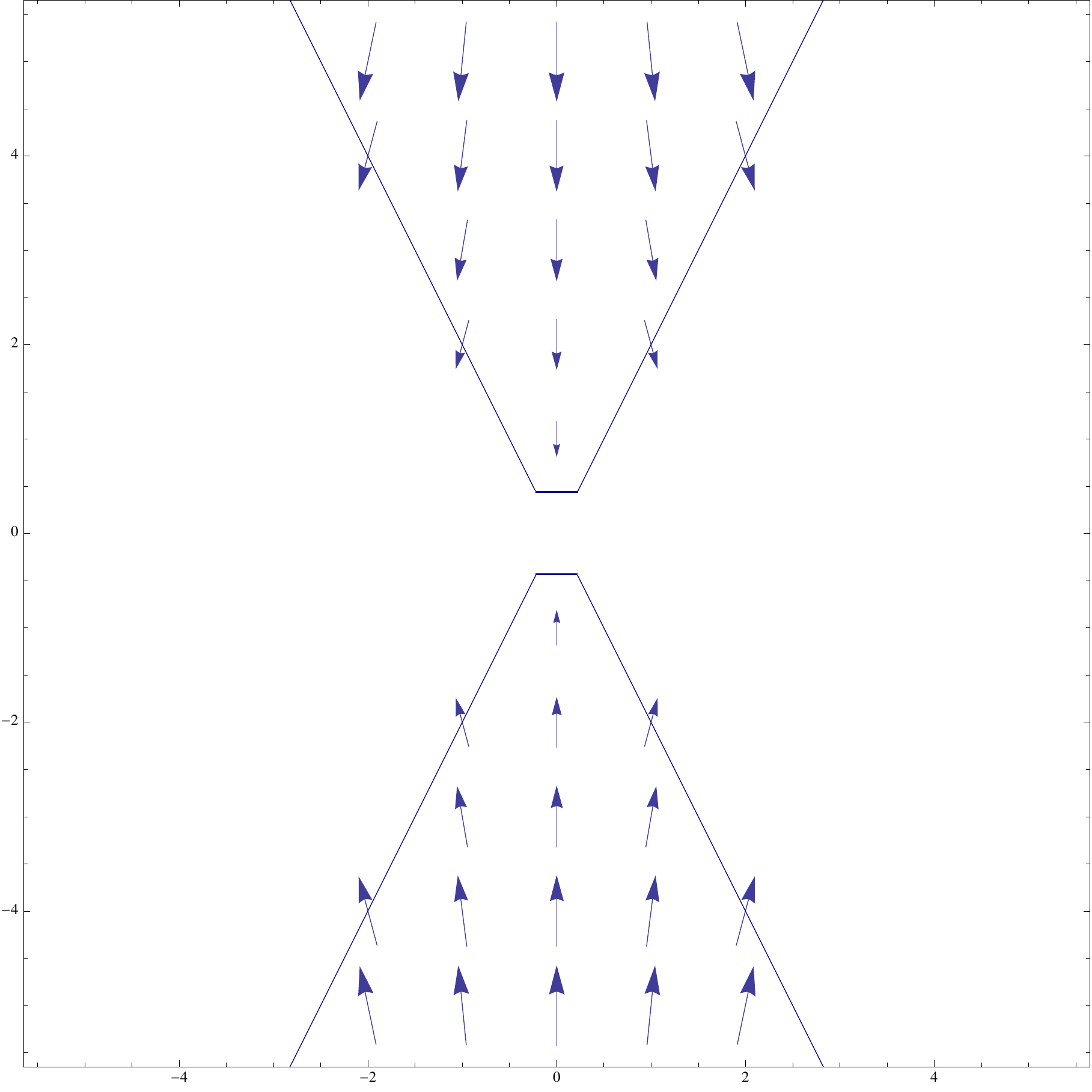}}
\put(1.7,0.4){\mbox{\rotatebox{90}{\tiny The values of $u$ are irrelevant here}}}
\put(0.3,1.6){\mbox{\rotatebox{270}{\tiny The values of $u$ are irrelevant here}}}
\end{picture}
\caption{Velocity field for $\alpha = 1/2$.}
\end{center}
\end{minipage}
\end{figure}

We also note the explicit formula
\begin{align}
u_{\alpha,s,\rho}(x_1,x_2) = \frac{\sign(x_2)}{2M} \left( - |x_1-x_2|^{\alpha} + |x_1+x_2|^{\alpha}, - |x_1-x_2|^{\alpha} - |x_1+x_2|^{\alpha} \right)\label{eq:u:Holder:def}
\end{align}
for all $x \in \CC_{\alpha,s,\rho} = \{x\in\RR^2 \,\big |\, |x_1|\le\tfrac 12 |x_2| \text{ and } |x|\in[2\eps_{\alpha,s,\rho},2]\}$. In particular,  we have
\begin{align} \label{1.2}
u_{\alpha,s,\rho}(0,x_2)=  M^{-1} (0,-x_2^\alpha)
\end{align}
for $x_2 \in [2\eps_{\alpha,s,\rho},2]$. 

Finally, note that since $u_{\alpha,s,\rho}$ is  smooth and compactly supported, the solution $\theta$ of \eqref{evo}--\eqref{evo:IC} will be  smooth and decaying at spatial infinity uniformly in $t\in[0,T]$ for any $T>0$ as long as  $\theta_0$ is smooth and decaying at infinity.  We will assume this from now on.  In what follows, we will also drop the subscripts and denote $u=u_{\alpha,s,\rho}$,  $\eps=\eps_{\alpha,s,\rho}$, and $\CC=\CC_{\alpha,s,\rho}$.

\begin{lemma}[\bf Symmetry] \label{lemma:symmetries}
If  $\theta_0$ is odd in $x_2$, 
then so is the solution $\theta(t,\cdot)$ of \eqref{evo}--\eqref{evo:IC} for $t>0$.
\end{lemma}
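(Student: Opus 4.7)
The plan is to exploit the reflection symmetry built into the drift \(u = u_{\alpha,s,\rho}\). Let \(R(x_1,x_2)=(x_1,-x_2)\). The first step is to verify that the stream function is odd in \(x_2\), which yields the corresponding parity of the drift. Indeed, for \(\psi = \psi_{\alpha,s,\rho}\) from \eqref{eq:stream}, swapping \(x_2 \mapsto -x_2\) interchanges \(|x_1-x_2|\) and \(|x_1+x_2|\), while \(\kappa\) is even and \(\mu_{\alpha,s,\rho}(|x|)\) is invariant. Hence \(\psi(x_1,-x_2) = -\psi(x_1,x_2)\). Differentiating this identity and dividing by \(M\) gives that \(u_1 = -M^{-1}\partial_{x_2}\psi\) is even in \(x_2\) and \(u_2 = M^{-1}\partial_{x_1}\psi\) is odd in \(x_2\), i.e.\
\begin{equation*}
u(Rx) = R\,u(x).
\end{equation*}
This is also visible directly from the explicit formula \eqref{eq:u:Holder:def} on \(\CC\).

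Next I would define \(\tilde\theta(t,x) := -\theta(t,Rx)\) and verify that \(\tilde\theta\) solves the same Cauchy problem as \(\theta\). For the transport term, using \(\partial_{x_1}\tilde\theta(t,x) = -(\partial_{x_1}\theta)(t,Rx)\) and \(\partial_{x_2}\tilde\theta(t,x) = (\partial_{x_2}\theta)(t,Rx)\), together with the parities of \(u_1,u_2\) above, one gets
\begin{equation*}
u(x)\cdot\nabla\tilde\theta(t,x) = -u_1(Rx)(\partial_{x_1}\theta)(t,Rx) - u_2(Rx)(\partial_{x_2}\theta)(t,Rx) = -(u\cdot\nabla\theta)(t,Rx).
\end{equation*}
For the diffusion, the kernel of \((-\Delta)^s\) is radial (in particular invariant under \(R\)), so \(((-\Delta)^s\tilde\theta)(t,x) = -((-\Delta)^s\theta)(t,Rx)\). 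Combining these and using that \(\theta\) solves \eqref{evo}, I obtain \(\partial_t\tilde\theta + u\cdot\nabla\tilde\theta + (-\Delta)^s\tilde\theta = 0\). The initial condition becomes \(\tilde\theta(0,x) = -\theta_0(Rx) = \theta_0(x)\) since \(\theta_0\) is odd in \(x_2\).

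Finally, since \(u\) is smooth and compactly supported and \(\theta_0\) is smooth and decaying at spatial infinity (as noted in the paragraph preceding the lemma), the Cauchy problem \eqref{evo}--\eqref{evo:IC} has a unique smooth solution in this class. Thus \(\tilde\theta \equiv \theta\), which is precisely the statement that \(\theta(t,\cdot)\) is odd in \(x_2\) for all \(t>0\). The only step requiring any care is the parity computation for the drift; the remaining ingredients (commutation of \((-\Delta)^s\) with orthogonal transformations, and uniqueness for the linear equation with smooth bounded drift) are standard and should be invoked without detailed proof.
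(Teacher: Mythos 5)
Your proposal is correct and follows essentially the same approach as the paper: the paper's own proof is exactly the observation that $u(Rx)=Ru(x)$ (because $\psi_{\alpha,s,\rho}$ is odd in $x_2$) combined with uniqueness of the solution, and you have simply filled in the details of checking that $-\theta(t,R\cdot)$ solves the same Cauchy problem.
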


\begin{proof}
This follows from uniqueness of the solution and from  $u(Rx)=Ru(x)$, which is due to $\psi_{\alpha,s,\rho}$ being odd in $x_2$.
Here and throughout the paper we denote $R(x_1,x_2)=(x_1,-x_2)$.
\end{proof}

The symmetry of $u$ also shows that if an odd-in-$x_2$ function $\theta$ is  a sub(super)-solution of \eqref{evo} in the upper half-plane $\{x_2>0\}$, then it is a super(sub)-solution in the lower half-plane. In view of Lemma \ref{lemma:symmetries}, it is therefore natural to compare odd-in-$x_2$ solutions with odd-in-$x_2$ functions that are  sub(super)-solutions on $\{x_2>0\}$ only.  From now on we will call such functions {\it odd-in-$x_2$ sub(super)-solutions}, and for them we have:

\begin{lemma}[\bf Comparison principle] \label{lemma:comparison}
If $\bar\theta$ is  smooth,  decaying at spatial infinity uniformly in $t\in[0,T]$ for any $T>0$, an odd-in-$x_2$ super-solution of \eqref{evo},  and satisfies $\sgn(x_2)\bar\theta(0,x)\ge 0$, then for $t>0$ we have $\sgn(x_2) \bar\theta(t,x) \ge 0$. 
\end{lemma}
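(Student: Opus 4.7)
The plan is to argue by contradiction using the parabolic maximum principle, with the key observation that the oddness of $\bar\theta$ in $x_2$, combined with a reflection identity for the kernel $|x-y|^{-(2+2s)}$, controls the sign of $(-\Delta)^s\bar\theta$ at an interior minimum in the upper half-plane. It suffices to show $\bar\theta(t,\cdot)\ge 0$ on $\{x_2\ge 0\}$ for all $t>0$, since by oddness this is equivalent to $\sgn(x_2)\bar\theta\ge 0$. To obtain the strict inequality needed to run a contradiction, I would first perturb: consider $\bar\theta_\delta(t,x)=\bar\theta(t,x)+\delta e^{\lambda t}\phi(x)$ for small $\delta>0$, with $\phi$ smooth, odd in $x_2$, strictly positive on $\{x_2>0\}$, decaying at infinity (something like $\phi(x)=x_2(1+|x|^2)^{-N}$), and $\lambda>0$ chosen large enough so that $\lambda\phi+u\cdot\nabla\phi+(-\Delta)^s\phi>0$ on $\{x_2>0\}$. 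This choice is possible because $u$ is smooth and compactly supported and $u_2(x_1,0)=\partial_{x_1}\psi_{\alpha,s,\rho}(x_1,0)=0$ (as $\psi_{\alpha,s,\rho}$ is odd in $x_2$), which forces both $[u\cdot\nabla\phi]/\phi$ and $[(-\Delta)^s\phi]/\phi$ to be bounded above on $\{x_2>0\}$, in particular near $\{x_2=0\}$, where both $\phi$ and $(-\Delta)^s\phi$ vanish to first order in $x_2$ by oddness. With these choices, $\bar\theta_\delta$ is an odd-in-$x_2$ strict super-solution of \eqref{evo}, smooth, decays at infinity uniformly in $t$, and $\bar\theta_\delta(0,\cdot)>0$ on $\{x_2>0\}$.

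Next, assume for contradiction that $\bar\theta_\delta(t_0,x_0)<0$ for some $t_0>0$ and $x_{0,2}>0$, and let $t_*\in(0,t_0]$ be the first such time. Continuity plus the definition of $t_*$ give $\bar\theta_\delta(t_*,\cdot)\ge 0$ on $\{x_2\ge 0\}$. Using uniform decay at infinity and the fact that $\partial_{x_2}\bar\theta_\delta(t,x_1,0)>0$ persists up to time $t_*$ (inherited from $\delta\phi$ by smoothness and the first-time property), the minimum of $\bar\theta_\delta(t_*,\cdot)$ on $\{x_2\ge 0\}$ is attained at an interior point $x_*\in\{x_2>0\}$, where $\bar\theta_\delta(t_*,x_*)=0$, $\nabla\bar\theta_\delta(t_*,x_*)=0$, and $\partial_t\bar\theta_\delta(t_*,x_*)\le 0$. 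Splitting the principal-value integral defining $(-\Delta)^s$ into the half-planes $\{y_2>0\}$ and $\{y_2<0\}$ and using the oddness of $\bar\theta_\delta$ together with $\bar\theta_\delta(t_*,x_*)=0$, one obtains the reflection identity
\begin{align*}
(-\Delta)^s\bar\theta_\delta(t_*,x_*)=-c_{s}\int_{y_2>0}\bar\theta_\delta(t_*,y)\left[\frac{1}{|x_*-y|^{2+2s}}-\frac{1}{|x_*-Ry|^{2+2s}}\right]dy\le 0,
\end{align*}
since $\bar\theta_\delta(t_*,\cdot)\ge 0$ on $\{y_2\ge 0\}$ and the bracketed kernel is nonnegative (because $|x_*-Ry|^2-|x_*-y|^2=4x_{*,2}y_2\ge 0$ for $x_{*,2},y_2>0$). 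Adding the three terms at $(t_*,x_*)$ gives a nonpositive left-hand side, contradicting the strict super-solution inequality. Hence $\bar\theta_\delta\ge 0$ on $\{x_2\ge 0\}$ for all $t\ge 0$, and sending $\delta\to 0^+$ concludes.

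The main obstacle is not the reflection identity itself (which is a clean consequence of oddness) but rather ensuring that the minimum at the putative first failure time $t_*$ is attained at a strictly interior point of $\{x_2>0\}$ and not merely in the limit on $\{x_2=0\}$ or at infinity. Uniform decay of $\bar\theta$ rules out the latter, while the perturbation $\delta e^{\lambda t}\phi$ is tailored to enforce a strictly positive normal derivative on $\{x_2=0\}$ at $t=0$; smoothness then propagates this positivity up to $t_*$, keeping $\bar\theta_\delta$ uniformly positive in a strip $\{0<x_2<\eta\}$ and forcing any failure to occur in the true interior. Verifying the bound $\lambda\phi+u\cdot\nabla\phi+(-\Delta)^s\phi>0$ globally on $\{x_2>0\}$ requires balancing behavior near the boundary (where $u_2$ and $(-\Delta)^s\phi$ both vanish to first order in $x_2$, matching $\phi$) with behavior at infinity (where the compact support of $u$ and the decay profile of $\phi$ keep the ratios bounded), but is a routine computation.
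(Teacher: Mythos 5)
Your reflection identity and the overall maximum-principle framework are exactly the paper's core idea, but you have introduced two complications that the paper avoids, and the second one leaves a genuine gap.

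First, the perturbation $\bar\theta_\delta = \bar\theta + \delta e^{\lambda t}\phi$ is unnecessary. At a \emph{negative} minimum $(t,x)$ of $\bar\theta$ on $[0,T]\times\{x_2\ge 0\}$ (which is attained, because $\bar\theta$ is smooth, odd, initially nonnegative on $\{x_2\ge 0\}$, and decays uniformly at infinity), the reflection estimate you derived gives
$(-\Delta)^s\bar\theta(t,x)\le 2c_s\,\bar\theta(t,x)\int_{\{y_2>0\}}|x-Ry|^{-2-2s}\dd y < 0$
strictly, because $\bar\theta(t,x)<0$. Combined with $\partial_t\bar\theta(t,x)\le 0$ and $\nabla\bar\theta(t,x)=0$, this already contradicts the super-solution inequality. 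No strict super-solution is needed: the strictness you wanted comes for free from the sign of $\bar\theta$ at the minimum, not from perturbing the equation.

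Second, and this is the real gap, your ``first failure time'' argument does not reliably produce an interior spatial minimum with $\bar\theta_\delta(t_*,x_*)=0$. At $t=t_*$ you know $\bar\theta_\delta(t_*,\cdot)\ge 0$ on $\{x_2\ge 0\}$, but its infimum is automatically $0$ (since $\bar\theta_\delta$ vanishes on $\{x_2=0\}$ and at infinity) and may fail to be attained at any point with $x_2>0$ — consider, e.g., a profile like $x_2 e^{-|x|^2}$, which is strictly positive on $\{x_2>0\}$ yet has infimum $0$ attained only on the boundary and at infinity, despite $\partial_{x_2}$ being strictly positive on $\{x_2=0\}$. The negativity for $t>t_*$ could emerge in the limit along $\{x_2=0\}$ or along $|x|\to\infty$ without a concrete interior zero at time $t_*$ where $\nabla\bar\theta_\delta=0$, $\partial_t\bar\theta_\delta\le 0$. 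Your appeal to the positive normal derivative and to uniform decay is insufficient to exclude these scenarios. By contrast, minimizing over the full space-time cylinder and noting that a \emph{negative} infimum is attained (by compactness and decay) sidesteps the issue entirely; that is what the paper does, and it also renders the construction of $\phi$ and the verification of $\lambda\phi + u\cdot\nabla\phi + (-\Delta)^s\phi > 0$ (which you call routine but which has real issues balancing the decay of $(-\Delta)^s\phi$ against that of $\phi$ at infinity) unnecessary.
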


\begin{proof}
Due to oddness in $x_2$ and smoothness, we have $\bar\theta(t,x_1,0) =0$ for $t>0$. Assume, towards contradiction, that for some $T>0$ we have
\[ 
\inf_{x_1 \in \RR, x_2>0, t \in [0,T]} \bar\theta (t,x_1,x_2)  < 0.
\]
Since $\bar\theta$ is smooth, odd in $x_2$, decays at spatial infinity uniformly in $t\in[0,T]$, and $\bar\theta(0,\cdot) \geq 0$, the above infimum must be attained at some point $(t,x)$, with $t  \in (0,T]$ and $x_2 >0$. We then must have  {$\partial_t \bar\theta (t,x)\leq 0$} and $\nabla \bar\theta (t,x)= 0$, hence $(-\Delta)^s \bar\theta (t,x)\ge 0$. 

On the other hand, oddness in $x_2$ yields
\begin{align*} 
(-\Delta)^s \bar\theta(t,x) &= c_s \int_{\RR^2}  \frac{ \bar\theta(t,x) - \bar\theta(t,y)}{|x-y|^{2+2s}} \dd y \notag\\
&= c_s \int_{\{y_2>0\}}  \left( \frac{ \bar\theta(t,x) - \bar\theta(t,y)}{|x-y|^{2+2s}} + \frac{ \bar\theta(t,x) + \bar\theta(t,y)}{|x-Ry|^{2+2s}} \right) \dd y
\end{align*}
(recall that $Ry=(y_1,-y_2)$). For any $y$ in the upper half-plane we have  $|x-y| \leq |x-Ry|$ because $x_2>0$, and since the minimum of $ \bar\theta$ occurs at $x$, we also know $\bar\theta(t,x) - \bar\theta(t,y) \leq 0$. Hence
\[
 \frac{ \bar\theta(t,x) - \bar\theta(t,y)}{|x-y|^{2+2s}}  + \frac{ \bar\theta(t,x) + \bar\theta(t,y)}{|x-Ry|^{2+2s}} \leq \frac{ \bar\theta(t,x) - \bar\theta(t,y)}{|x-Ry|^{2+2s}}  + \frac{ \bar\theta(t,x) + \bar\theta(t,y)}{|x-Ry|^{2+2s}} = \frac{ 2 \bar\theta(t,x)}{|x-Ry|^{2+2s}}
\]
and therefore $(-\Delta)^s \bar\theta(t,x) \leq 2 c_s  \bar\theta(t,x) \int_{\{y_2>0\}} |x-Ry|^{-2-2s} \dd y < 0$, a contradiction. 
\end{proof}

 The above lemma will be applied to the difference between an odd-in-$x_2$ solution $\theta$ and an odd-in-$x_2$ sub-solution $\theta_{\alpha,s}$ of \eqref{evo}, so that $\theta-\theta_{\alpha,s}$ is an odd-in-$x_2$ super-solution. If the latter is non-negative in the upper half-plane at $t=0$, it follows from Lemma~\ref{lemma:comparison} that at all later times, the oscillation of $\theta$ over any disc centered at the origin is no less than the oscillation of $\theta_{\alpha,s}$ over the same disc. 

We now turn to the construction of $\theta_{\alpha,s}$, in the spirit of the argument in Section~\ref{sec:Linfty}. Let $\eta\in C^\infty(\RR)$ be an even function, {supported on $[-1,1]$, positive on $(-1,1)$}, non-increasing on $\RR^+$, and with $\eta(0)=1$.    For some $r_\alpha \in (0,1/4)$ (to be chosen later), we let
\begin{align} 
\phi_\alpha(x_1,x_2) = \eta(|(x_1,x_2-1)| r_\alpha^{-1}) - \eta (|(x_1,x_2+1)| r_\alpha^{-1}) \label{eq:phi:alpha}
\end{align}
be a smooth approximation of $\chi_{B_{r_\alpha}(0,1)} - \chi_{B_{r_\alpha}(0,-1)}$. As in Section~\ref{sec:Linfty}, we shall use $\phi_\alpha$ to build $\theta_{\alpha,s}$, but first 
we give a measure of the effect of $(-\Delta)^{s}$ on $\phi_\alpha$. 

\begin{lemma}[\bf Control of dissipation] \label{lemma:dissipation}
 Let $\alpha \in (-1,1)$, $r_\alpha \in (0,1/4)$, and $\phi_\alpha$  from \eqref{eq:phi:alpha}. Then there exists  $c_{\alpha,s}>0$ such that 
\begin{align} \label{1.3}
(-\Delta)^s\phi_\alpha(x) \leq c_{\alpha,s}\phi_\alpha(x)
\end{align}
holds in the upper half-plane $\{x_2>0\}$. The constant $c_{\alpha,s}$ depends only on $r_\alpha,s $, and $\eta$.
\end{lemma}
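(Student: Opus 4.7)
The plan is to decompose $\phi_\alpha = \phi_+ - \phi_-$, where $\phi_+(x) := \eta(|x-(0,1)|/r_\alpha)$ is a non-negative bump supported in $\overline{B_{r_\alpha}(0,1)}$ and $\phi_-(x) := \phi_+(Rx)$ is its reflection through $\{x_2=0\}$. Since $r_\alpha < 1/4$ and $\eta \in C^\infty(\RR)$ is supported in $[-1,1]$ (hence vanishes to infinite order at $\pm 1$), the bumps $\phi_\pm$ lie in $C_c^\infty(\RR^2)$, have disjoint supports contained strictly in $\{|x_2|>1/2\}$, and $\phi_\alpha = \phi_+ \ge 0$ throughout the upper half-plane. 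I would treat the cases $\phi_\alpha(x)=0$ and $\phi_\alpha(x)>0$ separately, in both exploiting the reflection symmetry across $\{x_2=0\}$.

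For $x \in \overline{\{x_2>0\}\setminus B_{r_\alpha}(0,1)}$ we have $\phi_\alpha(x)=0$, and the infinite-order vanishing of $\phi_\alpha$ at $\partial B_{r_\alpha}(0,1)$ makes the integrand absolutely integrable (no principal value is needed), so
\begin{align*}
(-\Delta)^s\phi_\alpha(x) = -c_s\int_{\RR^2}\frac{\phi_\alpha(y)}{|x-y|^{2+2s}}\dd y.
\end{align*}
Applying the change of variables $y\mapsto Ry$ to the lower-half contribution and using $\phi_-(Rz)=\phi_+(z)$ rewrites this as
\begin{align*}
(-\Delta)^s\phi_\alpha(x) = c_s\int_{\{y_2>0\}}\phi_+(y)\left[\frac{1}{|x-Ry|^{2+2s}} - \frac{1}{|x-y|^{2+2s}}\right]\dd y.
\end{align*}
The identity $|x-Ry|^2-|x-y|^2 = 4x_2 y_2 > 0$ for $x_2,y_2>0$ makes the bracket non-positive, so $(-\Delta)^s\phi_\alpha(x) \leq 0 = c_{\alpha,s}\phi_\alpha(x)$. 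Moreover, for $x \in \partial B_{r_\alpha}(0,1)$ the bracket is \emph{strictly} negative throughout $\mathrm{supp}(\phi_+)$, so $(-\Delta)^s\phi_\alpha(x) < 0$ on that compact circle.

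For $x$ in the open ball $B_{r_\alpha}(0,1)$, the function $(-\Delta)^s\phi_\alpha$ is smooth (since $\phi_\alpha\in C_c^\infty$) and strictly negative on $\partial B_{r_\alpha}(0,1)$ by the previous step. By continuity there exist $\varepsilon>0$ and an open neighborhood $U$ of $\partial B_{r_\alpha}(0,1)$ on which $(-\Delta)^s\phi_\alpha \leq -\varepsilon$. Since $\phi_+$ is continuous and its zero set within $\overline{B_{r_\alpha}(0,1)}$ equals $\partial B_{r_\alpha}(0,1)$, I can choose $\tau>0$ so that $\{\phi_+<\tau\}\cap\overline{B_{r_\alpha}(0,1)}\subset U$. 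On $\{\phi_+<\tau\}$ the bound $(-\Delta)^s\phi_\alpha \le -\varepsilon \le 0 \le c\phi_+$ holds for any $c>0$, while on the compact set $\{\phi_+\geq\tau\}\cap\overline{B_{r_\alpha}(0,1)}$ the continuous function $(-\Delta)^s\phi_\alpha$ is bounded above by some $M = M(r_\alpha,s,\eta)$, giving $(-\Delta)^s\phi_\alpha \leq M \leq (M/\tau)\phi_+$. Setting $c_{\alpha,s}:=M/\tau$ finishes the proof.

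The main obstacle is the behavior near $\partial B_{r_\alpha}(0,1)$: there $\phi_+\to 0$, so no $L^\infty$ bound on $(-\Delta)^s\phi_\alpha$ can by itself yield an inequality of the form $(-\Delta)^s\phi_\alpha\le c\phi_+$. The crucial input is the odd reflection $\phi_-$ in the lower half-plane: without it one could only hope for $(-\Delta)^s\phi_+ \le 0$ at a boundary point (a priori with equality possible), which is too weak; with it, the symmetrization above forces \emph{strict} negativity up to and including the boundary circle, and this strictness is exactly what prevents the ratio $(-\Delta)^s\phi_\alpha/\phi_+$ from blowing up.
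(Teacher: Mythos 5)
Your proposal is correct and takes essentially the same approach as the paper: the reflection identity gives non-positivity of $(-\Delta)^s\phi_\alpha$ wherever $\phi_\alpha=0$, strict negativity on the compact circle $\partial B_{r_\alpha}(0,1)$ propagates by continuity to a neighborhood, and compactness plus the lower bound on $\phi_\alpha$ handles the interior. The only cosmetic difference is that you split the ball by a sublevel set $\{\phi_+<\tau\}$ rather than by a shrunken ball $B_{r_\alpha-\delta}(0,1)$ as the paper does.
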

\begin{proof}[Proof of Lemma~\ref{lemma:dissipation}]
We start with $x \in \{x_2>0\} \setminus B_{r_\alpha}(0,1)$. Due to the definition of $\eta$, we have $\phi_\alpha(x)=0$, and using oddness in $x_2$ we obtain
\begin{align} 
(-\Delta)^s \phi_\alpha(x) &= - c_s \int_{\RR^{2}} \frac{ \phi_\alpha(y)}{|x-y|^{2+2s}} \dd y\notag\\
&= - \int_{B_{r_\alpha}(0,1)}  \frac{\eta( |(y_1,y_2-1)|r_\alpha^{-1})}{|x-y|^{2+2s}} \dd y + \int_{B_{r_\alpha}(0,-1)} \frac{\eta( |(y_1,y_2+1)|r_\alpha^{-1})}{|x-y|^{2+2s}} \dd y  \notag\\
& = - \int_{B_{r_\alpha}(0,1)} \eta(|(y_1,y_2-1)|r_\alpha^{-1}) \left( \frac{1}{|x-y|^{2+2s}} - \frac{1}{|x-Ry|^{2+2s}} \right) \dd y.
\label{eq:diss:1}
\end{align}
Now we notice that for $x,y$ in the upper half-plane we have  $|x-y| \leq |x-Ry|$, and hence the integrand in \eqref{eq:diss:1} is positive. It follows that  $(-\Delta)^s \phi_\alpha(x) < 0 $ for all $x \in \{x_2>0\} \setminus B_{r_\alpha}(0,1)$.
Since  $\partial B_{r_\alpha}(0,1)$ is compact and $\phi_\alpha \in C^\infty$, it follows that there is $c_1 = c_1(r_\alpha,s,\eta)>0$ such that 
\begin{align*}
 (-\Delta)^s \phi_\alpha(x)\leq - c_1 < 0
 \end{align*}
for all $x \in \partial B_{r_\alpha}(0,1)$. Hence there exists $\delta =\delta(r_\alpha,s,\eta) \in (0,r_\alpha/2)$ such that $(-\Delta)^s \phi_\alpha(x) \leq 0$ for all $x \in \{x_2>0\} \setminus B_{r_\alpha-\delta}(0,1)$, implying \eqref{1.3} for these $x$ (with any $c_{\alpha,s}$).


It is left to verify \eqref{1.3} for all $x \in B_{r_\alpha - \delta}(0,1)$.  in view of the monotonicity of $\eta$, for each such $x$ we have that $\phi_\alpha(x) \geq  \eta(1-\delta r_\alpha^{-1}) > 0$. Therefore, for these $x$ we have 
\begin{align*}
(-\Delta)^s \phi_\alpha(x) &\leq c_2 r_\alpha^{-2s} \|\eta\|_{C^{2}} \leq c_{\alpha,s} \eta(1 -\delta r_\alpha^{-1}) \leq c_{\alpha,s} \phi_\alpha(x)
\end{align*}
for a suitably chosen $c_{\alpha,s} > 0$ (depending on $r_\alpha,s,\delta,\eta$), which concludes the proof.
\end{proof}

From  homogeneity of the kernel associated with the fractional Laplacian $(-\Delta)^s$ and from Lemma~\ref{lemma:dissipation} we directly have the following.

\begin{corollary}[\bf Effect of dissipation under rescaling]\label{cor:dissipation}
For $\lambda > 0$ let $\phi_{\alpha,\lambda}(x) = \phi_\alpha(x/\lambda)$.  Then
\begin{align}
(-\Delta)^s \phi_{\alpha,\lambda}(x)  \leq c_{\alpha,s} \lambda^{-2s} \phi_{\alpha,\lambda}(x)
\label{eq:diss:rescale}
\end{align}
holds on $\{x_2>0\}$.
\end{corollary}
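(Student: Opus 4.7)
The plan is to exploit the scaling homogeneity of the fractional Laplacian kernel and then immediately invoke Lemma~\ref{lemma:dissipation} at the rescaled point. The key identity I would first verify is the standard scaling relation
\[
(-\Delta)^s \phi_{\alpha,\lambda}(x) = \lambda^{-2s} \bigl((-\Delta)^s \phi_\alpha\bigr)(x/\lambda),
\]
which is an immediate consequence of writing $(-\Delta)^s \phi_{\alpha,\lambda}(x)$ as the principal value integral with kernel $c_s |x-y|^{-2-2s}$ and performing the change of variables $y = \lambda z$. The Jacobian contributes $\lambda^2$, the kernel contributes $\lambda^{-2-2s}$, and $\phi_{\alpha,\lambda}(y) = \phi_\alpha(z)$ by definition; the net factor is $\lambda^{-2s}$, which yields the identity.

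Next, I would note that if $x$ lies in the upper half-plane $\{x_2 > 0\}$, then $x/\lambda$ also lies in $\{x_2 > 0\}$ because $\lambda > 0$. Applying Lemma~\ref{lemma:dissipation} at the point $x/\lambda$ gives
\[
\bigl((-\Delta)^s \phi_\alpha\bigr)(x/\lambda) \le c_{\alpha,s}\, \phi_\alpha(x/\lambda) = c_{\alpha,s}\, \phi_{\alpha,\lambda}(x).
\]
Multiplying by $\lambda^{-2s}$ and using the scaling identity above yields the desired bound
\[
(-\Delta)^s \phi_{\alpha,\lambda}(x) \le c_{\alpha,s}\, \lambda^{-2s}\, \phi_{\alpha,\lambda}(x)
\]
on $\{x_2 > 0\}$, with the same constant $c_{\alpha,s}$ as in Lemma~\ref{lemma:dissipation}.

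There is no genuine obstacle here; the result is purely a consequence of dimensional analysis plus the previous lemma, and the only thing one has to be careful about is that the upper half-plane is invariant under the dilation $x \mapsto x/\lambda$, which it trivially is for positive $\lambda$. No new regularity or symmetry considerations enter, so the proof reduces to the two lines above.
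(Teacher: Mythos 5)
Your proof is correct and is exactly the argument the paper has in mind: the paper simply remarks that the corollary follows ``from homogeneity of the kernel associated with the fractional Laplacian and from Lemma~\ref{lemma:dissipation},'' and your two-step verification (the scaling identity $(-\Delta)^s\phi_{\alpha,\lambda}(x)=\lambda^{-2s}((-\Delta)^s\phi_\alpha)(x/\lambda)$ followed by an application of Lemma~\ref{lemma:dissipation} at $x/\lambda$, which stays in the upper half-plane since $\lambda>0$) makes that remark precise. There is nothing to add or correct.
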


Having established Lemma~\ref{lemma:dissipation} and Corollary~\ref{cor:dissipation}, it is clear from the argument in Section~\ref{sec:Linfty} how {we will} proceed.  We will build an odd-in-$x_2$ sub-solution to \eqref{evo} by adding an appropriate exponentially decaying prefactor (chosen using \eqref{eq:diss:rescale}) to an odd-in-$x_2$ sub-solution of the pure transport equation $(\partial_t + u \cdot \grad) \phi(t,x)=0$ with initial condition $\phi_\alpha$.  This sub-solution will be a time-dependent rescaling of $\phi_\alpha$, that will depend on the vector field $u=u_{\alpha,s,\rho}$ defined in \eqref{eq:ualpha}.  Note that it is easier to build a sub-solution then to find the actual solution because the disks on which $\phi_\alpha$ is supported deform when transported by  $u$. 


Let $T=T_{\alpha,s,\rho} = M(1-(4\eps)^{1-\alpha})/(1-\alpha)$ (recall that $\eps=\eps_{\alpha,s,\rho}$) and for $[0,T]$ let
\begin{align}\label{1.4}
z_\alpha(t)=\left(1-(1-\alpha)M^{-1}t\right)^{1/(1-\alpha)}.
\end{align}
Recalling \eqref{1.2}, we find that $z_\alpha$ is  the solution of the initial value problem
\begin{align} \label{1.5}
\dot{z}(t)=u_{2}(0,z(t)) = -M^{-1}z(t)^\alpha, \qquad z(0)=1,
\end{align}
because it is decreasing and $z_\alpha(T) = 4\eps\ge 2\eps$.  

\begin{lemma}[\bf Subsolution of the transport equation] \label{lemma:sub:transport}
The function $\bar \phi_\alpha(t,x)=\phi_{\alpha}(x/z_\alpha(t))$ 
is an odd-in-$x_2$ sub-solution of $(\partial_t + u_{\alpha,s,\rho} \cdot \nabla)\phi=0$ 
on the time interval $(0,T_{\alpha,s,\rho})$, provided $r_\alpha \in (0,1/4)$ is sufficiently small. 
\end{lemma}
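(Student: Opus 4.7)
The plan is to adapt the strategy used in Theorem~\ref{thm:L:infinity}: introduce an auxiliary ``scaling'' vector field $v$ that transports $\bar\phi_\alpha$ exactly, and then show that the discrepancy between $v$ and $u_{\alpha,s,\rho}$ has the right sign against $\nabla\bar\phi_\alpha$ on its support.

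Define $v(t,x):=(\dot z_\alpha(t)/z_\alpha(t))\,x$; a direct chain-rule computation gives $\partial_t\bar\phi_\alpha + v\cdot\nabla\bar\phi_\alpha = 0$. Since $\bar\phi_\alpha$ is odd in $x_2$ and $u_{\alpha,s,\rho}$ is equivariant under $x\mapsto Rx$, it suffices to verify $(\partial_t + u_{\alpha,s,\rho}\cdot\nabla)\bar\phi_\alpha\le 0$ on $\{x_2>0\}$. There the support of $\bar\phi_\alpha(t,\cdot)$ is the disc $D_t := B_{r_\alpha z_\alpha(t)}(0, z_\alpha(t))$, and since $z_\alpha(t)\in[4\eps, 1]$ for $t\in[0, T_{\alpha,s,\rho}]$, choosing $r_\alpha < 1/4$ ensures $D_t\subset\CC$ and hence that the explicit formula \eqref{eq:u:Holder:def} holds on $D_t$. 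Because $\phi_\alpha$ is radially non-increasing about $(0,1)$, the gradient $\nabla\bar\phi_\alpha(t,x)$ is a non-positive scalar multiple of $\xi := x - (0, z_\alpha(t))$ throughout $D_t$, and the required inequality reduces to
\[
(v - u_{\alpha,s,\rho})(t,x)\cdot\xi\;\le\;0 \qquad \text{for every }|\xi|\le r_\alpha z_\alpha(t).
\]

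The key observation is that $z_\alpha$ was chosen precisely so that $v$ and $u_{\alpha,s,\rho}$ agree at the center of the bump: the explicit formula gives $u_{1}(0, z_\alpha) = 0$, and the defining ODE \eqref{1.5} gives $u_{2}(0, z_\alpha) = \dot z_\alpha$. Hence the Taylor expansion of $(v - u_{\alpha,s,\rho})(t,x)\cdot\xi$ around $\xi = 0$ has no constant or linear part, and a direct computation of the $2\times 2$ Jacobians at $(0, z_\alpha)$ from \eqref{eq:u:Holder:def} produces the quadratic form
\[
\xi^{T}\, D(v - u_{\alpha,s,\rho})\big|_{(0, z_\alpha)}\,\xi \;=\; -\frac{z_\alpha(t)^{\alpha-1}}{M}\Big((1+\alpha)\xi_1^2 + (1-\alpha)\xi_2^2\Big),
\]
which is strictly negative definite since $\alpha\in(-1,1)$.

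It remains to absorb the cubic-and-higher remainder of the Taylor expansion. Here I would exploit that $u_{\alpha,s,\rho}$ is homogeneous of degree $\alpha$ on $\CC$, so $k$-th derivatives of $v - u_{\alpha,s,\rho}$ at $(0, z_\alpha)$ are bounded by $C_{\alpha,k}\,z_\alpha^{\alpha-k}$; combined with $|\xi|\le r_\alpha z_\alpha$, every term in the expansion of $(v - u_{\alpha,s,\rho})\cdot\xi$ shares the scale $z_\alpha^{\alpha+1}$ and carries a dimensionless prefactor of order $r_\alpha^{k+1}$. Consequently the negative quadratic piece, of size $r_\alpha^{2}z_\alpha^{\alpha+1}$, dominates the $O(r_\alpha^{3}z_\alpha^{\alpha+1})$ remainder once $r_\alpha$ is chosen small enough, uniformly in $t$ and $\eps$. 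The main obstacle is precisely this uniformity: in the genuinely supercritical case $\alpha<0$, derivatives of $u_{\alpha,s,\rho}$ blow up as $z_\alpha(t)\to 4\eps$, and only the scale-invariance of the construction --- with the ODE \eqref{1.5} tuned to the homogeneity of $u_{\alpha,s,\rho}$ --- prevents the remainder from spoiling the sub-solution inequality.
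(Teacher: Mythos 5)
Your proposal is correct and follows essentially the same strategy as the paper: introduce the radial scaling field $v$ that exactly advects $\bar\phi_\alpha$, reduce to the sign condition $(v-u_{\alpha,s,\rho})\cdot\bigl(x-(0,z_\alpha(t))\bigr)\le 0$ on the support, observe that the center is a critical point of this expression with positive-definite second-order behavior, and shrink $r_\alpha$ to win. The only cosmetic difference is that the paper first rescales by $z_\alpha(t)^{-1}$, reducing the inequality to $f(x)\ge 0$ near $(0,1)$ for a fixed, $z_\alpha$-independent $C^2$ function $f$, so that the choice of $r_\alpha$ is automatically uniform; your version instead carries the $z_\alpha$-dependence through the Taylor expansion and controls the remainder via the homogeneity bound on derivatives of $u$ --- this is the same use of $\alpha$-homogeneity, just packaged slightly less cleanly.
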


\begin{proof}[Proof of Lemma~\ref{lemma:sub:transport}] By the definition of $\phi_{\alpha}$ it suffices to prove
\[
\partial_t \bar \phi +u \cdot\nabla \bar\phi\leq 0
\]
where 
\[
\bar \phi(t,x_1,x_2) = \eta\left(\frac{|(x_1,x_2-z_\alpha(t))| }{r_\alpha z_\alpha(t)}\right)
\]
is the component of $\phi_\alpha(x/z_\alpha(t))$ which is supported in the upper half-plane.
Let 
\[
v(t,x_1,x_2)= - M^{-1}z_\alpha(t)^{\alpha-1}(x_1,x_2)
\] 
be the vector field that advects $\bar \phi$, so that we have
$
\partial_t \bar \phi +v \cdot\nabla\bar \phi = 0.
$
Hence, in order to prove the lemma  it suffices to show
\[
\left( v(t,x_1,x_2)-u(x_1,x_2) \right) \cdot \nabla \bar\phi(t,x_1,x_2) \ge 0
\]
which, since $\eta$ is non-increasing on $\RR^+$, is equivalent to
\begin{align} \label{1.8}
\left( u(x_1,x_2)+M^{-1}z_\alpha(t)^{\alpha-1}(x_1,x_2) \right) \cdot (x_1,x_2-z_\alpha(t)) \ge 0.
\end{align}
It suffices to prove \eqref{1.8} for $t\in[0,T]$ and $x_1^2+(x_2-z_\alpha(t))^2\le (r_\alpha z_\alpha(t))^2$ (notice that such $(x_1,x_2)$ then lie in the domain $\CC$ because $r_\alpha<1/4$ and $z(t)\in[4\eps,1]$). By $\alpha$-homogeneity of $u$ in  $\CC$ (see~\eqref{eq:u:Holder:def}) and after scaling by $z_\alpha(t)^{-1}$,  it is sufficient to verify that
\begin{align} \label{1.9}
\left( Mu(x_1,x_2)+(x_1,x_2) \right) \cdot (x_1,x_2-1) \ge 0
\end{align}
holds for $x_1^2+(x_2-1)^2\le r_\alpha ^2$.
Since \eqref{eq:u:Holder:def} holds for these $x$, the {left-hand} side of \eqref{1.9} there equals
\[
f(x_1,x_2)=-\frac 12 (x_2-x_1)^{\alpha}(x_2-1+x_1) -\frac 12 (x_2+x_1)^{\alpha}(x_2-1-x_1) + x_2(x_2-1)+x_1^2.
\]
After an elementary 
computation we obtain
\begin{gather}
f(0,1)=f_{x_1}(0,1)=f_{x_2}(0,1)=f_{x_1x_2}(0,1)=0,
\\ f_{x_1x_1}(0,1)=2(1+\alpha)>0, \quad f_{x_2x_2}(0,1)=2(1-\alpha)>0,
\end{gather}
and therefore $(0,1)$ is a local minimum of $f$ for any $\alpha \in (-1,1)$.
Since $f$ is $C^2$ near $(0,1)$, there exists $r_\alpha\in(0,1/4)$ such that  $f(x_1,x_2)\ge 0$ for $x_1^2+(x_2-1)^2\le r_\alpha ^2$, which proves \eqref{1.9}, and hence the lemma.
\end{proof}

\begin{proof}[Proof of Theorem~\ref{thm:Holder}]
We now let
\begin{align} \label{1.6}
\gamma_{\alpha,s}(t)=  \int_0^t c_{\alpha,s}z_\alpha(\tau)^{-2s} \dd \tau = 
\begin{cases}
c_{\alpha,s}M(1-\alpha-2s)^{-1} \left( 1- z_\alpha(t)^{1-\alpha-2s} \right) & \alpha+2s < 1, \\
- c_{\alpha,s}M\ln z_\alpha(t) & \alpha+2s= 1, 
\end{cases}
\end{align}
where $c_{\alpha,s}$ is  from \eqref{1.3}, and $t\in [0,T]$.
Finally, we define a rescaled modulated version of $\phi_\alpha$:
\begin{align}\label{1.7}
\theta_{\alpha,s}(t,x)= c \exp\left( -  \gamma_{\alpha,s}(t)\right) \phi_\alpha\left(\frac{x}{z_\alpha(t)}\right),
\end{align}
where $c=\|\phi_\alpha\|_{C^2}^{-1}$.
Again, $\theta_{\alpha,s}$ is a function supported at any time $t$ on two discs whose centers $(0,\pm z_\alpha(t))$ are transported towards the origin with the drift $u$ and whose radii are $r_\alpha z_\alpha(t)$, that is, they shrink at the same rate as they approach the origin.

 Lemma~\ref{lemma:sub:transport} and  Corollary~\ref{cor:dissipation}  together show that $\theta_{\alpha,s}$  is an odd-in-$x_2$ sub-solution of \eqref{evo} for $t\in[0,T]$ because then
 \begin{align*}
\left( \partial_t + u \cdot \nabla + (-\Delta)^s \right) \theta_{\alpha,s} \leq \dot{\gamma}_{\alpha,s}(t) \theta_{\alpha,s} + c_{r,s} z_\alpha(t)^{-2s} \theta_{\alpha,s} = 0
\end{align*} 
holds on $\{x_2>0\}$.  If $\theta$ is the odd-in-$x_2$ solution with initial condition $\theta_0=c\phi_\alpha(=\theta_{\alpha,s}(0,\cdot))$, Lemma \ref{lemma:comparison} shows that $\sgn(x_2)\theta\ge \sgn(x_2)\theta_{\alpha,s}$ for $t\in[0,T]$. We now only need to pick $\eps=\eps_{\alpha,s,\rho}>0$ so that $\theta_{\alpha,s}$ (and thus also $\theta$) breaks $\rho$ at the origin at time $T=T_{\alpha,s,\rho}$.  This will be possible because if we picked $\eps=0$ and thus $T=M/(1-\alpha)$ (recall that $\alpha+2s<1$, so $z_\alpha(M/(1-\alpha))=0$), then we would obtain  $\gamma_{\alpha,s}(T) <\infty$, so $\theta$ would become discontinuous in finite time.  

Of course, we will need $\eps>0$ to ensure smoothness of the drift $u$ and the solution $\theta$. We let $\eps=\eps_{\alpha,s,\rho}>0$ be such that 
%
\begin{align}
\rho(8 \eps) < 2c\exp(-\gamma_{\alpha,s}(M/(1-\alpha)))=2c\exp\left( - c_{\alpha,s} (1-\alpha-2s)^{-1} \right) \qquad (>0)
\label{eq:tstar}
\end{align}
and consider the corresponding $T=T_{\alpha,s,\rho}<M/(1-\alpha)$.
Then we immediately obtain
\[
\osc_{B_{4 \eps}(0)} \theta(T,\cdot) \geq \osc_{B_{4 \eps}(0)} \theta_{\alpha,s} (T,\cdot) \geq 2c \exp\left( - \gamma_{\alpha,s}(T) \right) \geq 2 c\exp\left( - c_{\alpha,s} (1-\alpha-2s)^{-1} \right) > \rho(8\eps)
\]
because $\theta,\theta_{\alpha,s}$ are smooth.  Thus $\rho$ is broken at time $T<M/(1-\alpha)$, with $M$ independet of $\rho$.
\end{proof}

\begin{remark}[\bf On bounds for  the critical case]
 In the critical case $\alpha+2s=1$, the above proof fails because if $T$ is such that $z_\alpha(T)=0$, then $\gamma_{\alpha,s}(T)=\infty$.  However,  \eqref{1.6} does yield
 \[ 
 \osc_{B_{4 z_\alpha(t)}(0)} \theta(t,\cdot) \geq  2c \exp\left(-\gamma_{\alpha,s}(t) \right) = 2cz_{\alpha}(t)^{c_{\alpha,s}M}
 \]
 for $t \in [0,T_{\alpha,s,\rho}]$. 
 Thus, the solution $\theta$ cannot have H\"older modulus of continuity $C^\beta$ for any $\beta > c_{\alpha,s}M$, so it becomes less regular as $M$ decreases. 
In particular, for $s=1/2$ and $\alpha=0$, we cannot have a H\"older modulus better than $C \|u\|_{L^\infty}^{-1}$, with a universal $C>0$. 
\end{remark}

The procedure described in this section may be summarized in one abstract theorem, which we shall use later in Section~\ref{sec:laplacian} to obtain blow-up in the case $s=1$, and in Section~\ref{sec:barely} for the case of a nonlocal operator $\EL$ which generalizes the fractional Laplacian, and is slightly supercritical. 

\begin{theorem}[\bf An abstract loss of regularity result] \label{thm:abstract}
Assume that  the drift-diffusion equation 
\begin{align} \label{1.11}
\theta_t+ u \cdot\nabla\theta +\EL \theta =0
\end{align}
on $(0,T)\times\RR^2$, with $u$ a continuous divergence-free vector field with  $u(t,Rx)=Ru(t,x)$ and $\EL$ a dissipative linear operator acting on $x$, satisfies Lemmas \ref{lemma:symmetries} and  \ref{lemma:comparison}.
Further assume that $u_1(t,0,x_2)=0$ and $\sgn(x_2)u_2(t,0,x_2)\le 0$ for $x_2\neq 0$. 
Let  $r\in(0,1)$, $\eta$ be a smooth bump function as above, define 
\[ \phi(x_1,x_2)=\eta(|(x_1,x_2-1)|r^{-1})-\eta(|(x_1,x_2+1)| r^{-1}),\] 
and let $z$ solve the ODE
\begin{equation} \label{1.20}
\dot{z}(t)=u_2(t,0,z(t)), \qquad z(0)=1
\end{equation}
on $(0,T)$.
Assume that for $\lambda>0$ and $\phi_\lambda (x)=\phi (x/\lambda)$ there exists $H(\lambda)>0$ such that
\begin{align} \label{1.10}
\EL\phi_\lambda \le H(\lambda) \phi_\lambda
\end{align}
on $\RR\times\RR^+$.
Finally, assume that for each $t\in(0,T)$, 
\begin{align} \label{1.13}
\left( u(t,x_1,x_2) - u_2(t,0,z(t))z(t)^{-1}(x_1,x_2) \right) \cdot (x_1,x_2-z(t)) \ge 0
\end{align}
holds on the disc $x_1^2+(x_2-z(t))^2\le rz(t)$. If $\theta$ is the solution of \eqref{1.11} with initial condition $\theta(0,\cdot)=c\phi$ for some $c>0$, then $\theta$ breaks at time $T$ any modulus of continuity $\rho$ with
\begin{align}\label{1.12}
\rho(2z(T)) < 2c\exp \left( - \int_0^{T} H(z(t)) \dd t \right).
\end{align}
\end{theorem}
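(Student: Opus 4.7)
The plan is to reproduce, in the abstract setting, the construction carried out in the proof of Theorem~\ref{thm:Holder}. Define the candidate sub-solution
\[
\theta_*(t,x) = c\exp\!\left(-\int_0^t H(z(\tau))\,d\tau\right)\phi\!\left(\frac{x}{z(t)}\right),
\]
so that $\theta_*(0,\cdot)=c\phi=\theta(0,\cdot)$ and $\theta_*$ is odd in $x_2$ (since $\phi$ is). The first step is to verify that $\theta_*$ is an odd-in-$x_2$ sub-solution of \eqref{1.11}. Applying the dissipation hypothesis \eqref{1.10} with $\lambda=z(t)$ gives $\EL[\phi(\cdot/z(t))](x)\le H(z(t))\phi(x/z(t))$ on $\{x_2>0\}$, and the time derivative of the exponential prefactor contributes exactly $-H(z(t))\theta_*$, which cancels the dissipative upper bound. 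Thus it suffices to prove the pure transport inequality
\[
\left(\partial_t+u\cdot\nabla\right)\phi(x/z(t))\le 0
\]
on $\{x_2>0\}$.

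For the transport step, let $\bar\phi_+(t,x)=\eta(|(x_1,x_2-z(t))|/(rz(t)))$ be the positive lobe of $\phi(x/z(t))$ in the upper half-plane (the negative lobe is handled by $x_2$-oddness). Both the center $(0,z(t))$ and the radius $rz(t)$ of the supporting disc scale proportionally to $z(t)$, so a direct differentiation shows that $\bar\phi_+$ is advected rigidly by the radial field $v(t,x)=\dot z(t)z(t)^{-1}x=u_2(t,0,z(t))z(t)^{-1}x$; that is, $\partial_t\bar\phi_++v\cdot\nabla\bar\phi_+=0$. Since $\eta$ is non-increasing on $\RR^+$, $\nabla\bar\phi_+(t,x)$ is a non-positive scalar multiple of $(x_1,x_2-z(t))$, and so the desired inequality $(v-u)\cdot\nabla\bar\phi_+\le 0$ reduces to $(u-v)\cdot(x_1,x_2-z(t))\ge 0$, which is exactly hypothesis~\eqref{1.13}, and which one need only verify on the disc $x_1^2+(x_2-z(t))^2\le (rz(t))^2$, where $\bar\phi_+$ is not locally constant. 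The boundary-flow conditions $u_1(t,0,x_2)=0$ and $\sgn(x_2)u_2(t,0,x_2)\le 0$ ensure that \eqref{1.20} has a unique non-increasing solution on $(0,T)$, so that $z(t)\in(0,1]$ and the positive lobe remains inside $\{x_2>0\}$ throughout.

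Once $\theta_*$ has been shown to be an odd-in-$x_2$ sub-solution of \eqref{1.11}, the difference $\theta-\theta_*$ is a smooth, spatially-decaying, odd-in-$x_2$ super-solution that vanishes at $t=0$. Lemma~\ref{lemma:comparison} then gives $\sgn(x_2)(\theta-\theta_*)(t,x)\ge 0$ on $[0,T]\times\RR^2$. Evaluating at $(0,\pm z(T))$, and noting that $\phi(0,1)=\eta(0)-\eta(2/r)=1$ (since $r<1<2$), we obtain
\[
\theta(T,(0,z(T)))-\theta(T,(0,-z(T)))\;\ge\;2c\exp\!\left(-\int_0^T H(z(\tau))\,d\tau\right).
\]
These two points are separated by $2z(T)$, so by \eqref{1.12} the right-hand side exceeds $\rho(2z(T))$, proving that $\rho$ is broken at time $T$.

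The only non-routine step is the transport inequality, and its whole content is precisely the geometric hypothesis~\eqref{1.13}: the radial field $v$ advects the bump rigidly toward the origin, and one needs the discrepancy $u-v$ to point, on the disc supporting $\bar\phi_+$, into the half-space where $\bar\phi_+$ does not decrease. Everything else is bookkeeping with the exponential prefactor, the scaling of the dissipation bound, and the abstract comparison principle already assumed.
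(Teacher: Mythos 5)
Your proof is correct and follows essentially the same route as the paper: construct the sub-solution $\bar\theta(t,x)=c\exp\left(-\int_0^t H(z(\tau))\,d\tau\right)\phi(x/z(t))$, verify that it is an odd-in-$x_2$ sub-solution of \eqref{1.11} by combining the dissipation bound \eqref{1.10} with the transport inequality that reduces (via the auxiliary radial field $v$) to the geometric hypothesis \eqref{1.13}, and then apply the comparison lemma and the oscillation estimate at $(0,\pm z(T))$. The only blemish is a sign slip where you state the ``desired inequality'' as $(v-u)\cdot\nabla\bar\phi_+\le 0$ when it should be $(u-v)\cdot\nabla\bar\phi_+\le 0$, but the reduced condition you arrive at, $(u-v)\cdot(x_1,x_2-z(t))\ge 0$, is the correct one and matches \eqref{1.13} exactly.
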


Notice that if $z(T)=0$ (which can only happen if $u$ is not uniformly Lipschitz in $x$) and $\int_0^{T} H(z(t)) \dd t<\infty$, then $\theta$ breaks {\it any} modulus of continuity at time $T$, so we have finite time blow-up in $C(\RR^2)$.

It is clear that Theorem~\ref{thm:Holder} is a particular case of Theorem~\ref{thm:abstract}, with $\EL = (-\Delta)^{s}$ and $u=u_{\alpha,s,\rho}$ from \eqref{eq:ualpha}. Lemma~\ref{lemma:dissipation} and Corollary~\ref{cor:dissipation}  merely verify \eqref{1.10}, while \eqref{1.13} is verified in the proof of Lemma~\ref{lemma:sub:transport}. Finally, super-criticality $\alpha+2s<1$ guarantees condition \eqref{1.12} for any given modulus $\rho$ and an appropriate $T=T_{\alpha,s,\rho}$.

\begin{proof}[Proof of Theorem~\ref{thm:abstract}]
This follows as above because $\bar\phi(t,x)=\phi(x/z(t))$ is again an odd-in-$x_2$ sub-solution of $(\partial_t + u \cdot \nabla)\phi=0$ for $t\in(0,T)$ (see the proof of  Lemma~\ref{lemma:sub:transport} up to \eqref{1.8}), and so 
\begin{align}\label{1.14}
\bar\theta(t,x)= c\exp\left( -\int_0^{t} H(z(\tau)) \dd \tau \right) \phi\left( \frac{x}{z(t)}\right)
\end{align}
is an odd-in-$x_2$ sub-solution of \eqref{1.11} for $t\in(0,T)$ with $\bar\theta(0,\cdot)=c\phi$. Hence again
\[
\osc_{B_{z(T)}(0)} \theta(T,\cdot) \geq \osc_{B_{z(T)}(0)} \bar \theta (T,\cdot) \geq 2c\exp\left(  -\int_0^{T} H(z(\tau)) \dd \tau \right)  > \rho(2z(T)),
\]
so $\theta$ breaks $\rho$ at time $T$.
\end{proof}

\section{Finite time blow-up for non-autonomous drift-diffusion} \label{sec:laplacian}

In this section we prove Theorem \ref{t:intro3}. The argument in Section~\ref{sec:Holder} fails for $s=1$ when the drift velocity $u$ is time-independent. In fact, as we shall prove in Section~\ref{sec:regularity} that the result is simply not true in this case: the solution has a very weak modulus of continuity, globally in time, even if the time-independent drift is supercritical. In this section we prove that by allowing the drift velocity to depend on time, finite time loss of regularity can still be obtained in the  supercritical regime. We recall that for $s=1$ the critical space is $L^2(\RR^2)$.

\begin{proof}[Proof of Theorem~\ref{t:intro3}]
The proof is an application of Theorem~\ref{thm:abstract}.  We first let $q\in(p,2)$ and let
$z(t)$ be the solution of
\begin{align} 
\dot{z}(t) = - M^{-1} z(t)^{-2/q}, \qquad z(0)=1, \label{eq:z:dot}
\end{align}
with $M>0$ to be chosen later.
That is, 
\begin{align} 
z(t) = \left( 1- \frac{q+2}{q} M^{-1}t \right)^{q/q+2} \label{eq:z(t)}
\end{align}
for all $t \in [0,t_q]$, where $t_q = Mq/(q+2)$ is the time when $z$ reaches $0$. 

Next, we consider the vector  field
\begin{align} 
\bar u(x) =  \nabla^\perp \left(\frac{|x_1-x_2| - |x_1+x_2|}{2} \kappa \left(\frac {x_1}{x_2} \right) \mu(|x|) \right), \label{bar:u}
\end{align}
where $\kappa, \mu$ are smooth with $\kappa$ as in \eqref{eq:stream} and 
\[
\chi_{[1/2,2]} \le \mu \le \chi_{[1/3,3]}.
\]
That is, $\bar u$ is a smooth and compactly supported version of the vector field in Section~\ref{sec:Linfty}. 

Finally, we let $u$ be any smooth time-dependent divergence-free vector field with
\begin{align} 
u(t,x) = \frac{1}{Mz(t)^{2/q}}\; \bar u \left( \frac{x}{z(t)} \right) \label{u}
\end{align}
for $t\in[0,T_\rho]$ and reasonably behaved for $t\ge T_\rho$, where $T_\rho\in(0,t_q)$ will be chosen later.  We now pick $M>0$  large enough so that $\|\sup_t |u| \|_{L^p}\le 1$.  This is possible because
\[
\sup _{t\le T_\rho} |u(t,x)| \le \frac C{M|x|^{2/q}}
\] 
(for some constant $C$), and the left hand side vanishes for $|x|\ge 3$.

Then $u$ and $\EL=-\Delta$  satisfy all the hypotheses of Theorem~\ref{thm:abstract}, with $T=T_\rho$,  any $r\in(0,1/4)$, as well as for instance $\eta(|x|)= \exp(1-(1-|x|)^{-1})\chi_{[0,1]}(|x|)$ and $c>0$ such that then $\|c\phi\|_{C^2}\le 1$.
 In particular,  comparison and oddness in $x_2$ follow; 
\eqref{1.20} holds because $\bar u(0,1)=-(0,1)$ and \eqref{1.13} is just \eqref{1.30} (both for $t\le T_\rho$).

It remains to check that \eqref{1.10} holds with $H(\lambda)=C\lambda^{-2}$ for some $C>0$. By scaling, we only need to check it for $\lambda=1$. Since we are in the case $s=1$, which is local, only one of the terms in the definition of $\phi$ affects $\Delta \phi$ in the upper half plane:
\[ \Delta \phi(x) = \Delta \eta(r^{-1} |(x_1,x_2-1)|). \]
We start checking \eqref{1.10} for $x$ sufficiently close to $\partial B_r(0,1)$.  For $x$ with  $d=r^{-1}|(x_1,x_2-1)|$  close to $1$, we have 
\begin{align*}
-\Delta \phi(x) &= - r^{-2} \left( \eta''(d) - \frac{\eta'(d)}{d} \right) \\
 &= - r^{-2} \left( \frac 1 {(1-d)^4} - \frac 2 {(1-d)^3} - \frac 1 {d(1-d)^2} \right) \exp(1-(1-d)^{-1}) \leq 0.
\end{align*}
The last inequality holds for $d>1-\delta_0$ if $\delta_0$ is a small constant, since the first term $(1-d)^{-4}$ controls the other two. For $x$ such that $r^{-1} |(x_1,x_2-1)| \leq 1 - \delta_0$,
we have
\[
-\Delta \phi(x) \leq r^{-2} \| \Delta \eta \|_{L^\infty} \leq C \eta(1-\delta_0) \leq C \phi(x)
\]
by the monotonicity of $\eta$, if $C$ is chosen sufficiently large.

Since $H(z(t))= C z(t)^{-2}$, the right-hand side of \eqref{1.12} is strictly positive even if $T$ is replaced by $t_q$ (because $-2q/(q+2)>-1$).  Thus one only needs to choose $T_\rho$ sufficiently close to $t_q$ so that \eqref{1.12} holds.  The proof is finished.
\end{proof}

\section{Uniqueness of particle trajectories for divergence-free time-independent drifts in two dimensions} \label{sec:trajectories}

In this section we show that when the velocity of a transport equation is both {\em divergence-free} and {\em time-independent}, a seemingly unexpected regularity result for the associated Lagrangian trajectories holds.

Given a time-independent vector field $u$ which is divergence-free and does not vanish on a two-dimensional domain, we let $X(a,t)$ be the flow map associated to $u$, i.e. the solution of the ODE 
\begin{align} 
\frac{d X(a,t)}{dt} = u(X(a,t)), \qquad X(a,0)= a \label{eq:X}.
\end{align}
Although $u$ is not Lipschitz, since $u$ is divergence-free, we will show that the flow map $X(a,t)$ is unique, as long as $|u|$ stays away from $0$. More precisely, we have the following:

\begin{theorem}[\bf Uniqueness of particle trajectories]\label{lemma:unique:X}
 Let $u \colon \RR^2 \to \RR^2$ be a divergence-free continuous function such that $|u| \neq 0$ on a domain $\DD \subset \RR^2$. For any $a \in \DD$, the solution of \eqref{eq:X} is unique (forward and backward in time) as long as it stays in $\DD$.
\end{theorem}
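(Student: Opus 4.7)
The plan is to exploit the two-dimensional divergence-free structure: on a neighborhood where $u$ does not vanish, $u$ admits a $C^1$ local stream function $\psi$, and every trajectory of $u$ lies on a level set of $\psi$. On such a level set, which is a $C^1$ curve near $a$ because $\nabla \psi(a) \neq 0$, the dynamics reduce to a scalar ODE $\dot s = \pm f(s)$ with continuous $f \geq c > 0$, and uniqueness for such a 1D ODE is elementary (separation of variables).

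First I would work locally. Fix $a \in \DD$; by continuity of $u$ and $u(a) \neq 0$, choose an open ball $V \subset \DD$ around $a$ on which $|u| \geq c > 0$. On the simply-connected $V$, the 1-form $\omega = -u_2 \,dx_1 + u_1 \,dx_2$ is closed in the sense of distributions (this is precisely $\dv u = 0$), so by the Poincar\'e lemma there is $\psi \in C^1(V)$ with $d\psi = \omega$, that is $u = \nabla^\perp \psi$; equivalently, define $\psi$ by a line integral from $a$, which is path-independent by the divergence-free hypothesis and visibly $C^1$ because its partial derivatives are the continuous functions $\mp u_j$.

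Second, any $C^1$ solution $X$ of \eqref{eq:X} with $X(0)=a$ satisfies
\begin{align*}
\frac{d}{dt}\psi(X(t)) = \nabla \psi(X(t)) \cdot u(X(t)) = \nabla \psi \cdot \nabla^\perp \psi = 0
\end{align*}
as long as $X(t) \in V$, so $X(t)$ is confined to $\Gamma := \{x \in V : \psi(x) = \psi(a)\}$. Since $\nabla \psi(a)$ is $u(a)$ rotated by $\pi/2$ and hence non-zero, the implicit function theorem yields an arc-length $C^1$ parameterization $\gamma : (-r,r) \to \Gamma$ with $\gamma(0) = a$. Writing $X(t) = \gamma(s(t))$, the chain rule gives $\dot s(t)\,\dot\gamma(s(t)) = u(\gamma(s(t)))$; both $\dot\gamma$ and $u\circ\gamma$ are unit-sized (up to the non-vanishing factor $|u\circ\gamma|$), continuous, and tangent to $\Gamma$, hence parallel with a constant sign $\epsilon \in \{\pm 1\}$. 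Therefore $\dot s(t) = \epsilon f(s(t))$ with $f(s) := |u(\gamma(s))|$ continuous and bounded below by $c$ on $(-r,r)$.

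Third, the scalar equation $\dot s = \epsilon f(s)$ with $f$ continuous and strictly positive has unique solutions through any initial point: separation of variables gives $\epsilon t = \int_0^{s} d\sigma/f(\sigma)$, whose right-hand side is a strictly monotone continuous function of $s$ and hence invertible, determining $s(t)$ uniquely. Consequently $X(t) = \gamma(s(t))$ is uniquely determined on a neighborhood of $t=0$. Uniqueness propagates along the trajectory by a standard continuation argument as long as $X(t) \in \DD$, and backward-in-time uniqueness follows by applying the same reasoning to $-u$, which is also continuous, divergence-free, and non-vanishing on $\DD$. The main subtlety in carrying this out is the stream function construction when $u$ is only continuous: one must verify that distributional divergence-freeness of the continuous field $u$ really produces a $C^1$ potential $\psi$ with $\nabla \psi = (-u_2, u_1)$ pointwise, which is handled either by mollification followed by uniform convergence of the potentials, or directly by checking that the line-integral definition gives the right continuous partial derivatives.
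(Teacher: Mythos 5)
Your proof is correct but takes a genuinely different route from the paper's. The paper argues by contradiction: it assumes two trajectories from distinct points $a\neq b$ first meet at a minimal time $T$, forms a simple closed curve from terminal arcs of the two trajectories together with a short transversal segment near the collision point, and applies the divergence theorem on the enclosed region to get $0=\int_\Omega\nabla\cdot u=\int_\gamma u\cdot n<0$ (since $u\cdot n=0$ on the trajectory arcs and $u\cdot n<0$ on the transversal). You instead build the Hamiltonian structure explicitly: a local $C^1$ stream function $\psi$ with $u=\nabla^\perp\psi$ (the mollification argument you sketch for the ``main subtlety'' does work, since the mollified fields remain divergence-free and the line-integral potentials converge locally uniformly with convergent gradients), confinement of trajectories to level curves of $\psi$ via $\frac{d}{dt}\psi(X(t))=0$, a $C^1$ arc-length parameterization of the level curve by the implicit function theorem (valid since $\nabla\psi(a)\neq 0$), and a reduction to a scalar autonomous ODE $\dot s=\epsilon f(s)$ with $f$ continuous and bounded below, for which uniqueness follows by separation of variables. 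This is essentially the Bouchut--Desvillettes viewpoint that the paper cites in passing. Your route yields more structural information (a $C^1$ first integral and an explicit parameterization of the orbit), while the paper's Green's-theorem argument is shorter and avoids constructing a potential or invoking the implicit function theorem; both exploit the two-dimensionality and the non-vanishing of $u$ in an essential way.
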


\begin{proof}[Proof of Theorem~\ref{lemma:unique:X}]
Existence of solutions is standard because $u$ is continuous (Peano's theorem). We prove uniqueness by contradiction. Modulo changing the direction of time (i.e., replacing $u$ by $-u$), we can assume that there exist $a \neq b \in \DD$ and a minimal time $T>0$ such that $X(a,T)=X(b,T)$. Here $T$ is assumed to be small enough so that $X(a,t), X(b,t) \in \DD$ for all $t\in [0,T]$. Let $u_0 = u(X(a,T)) = u(X(b,T))$. Since $u$ is continuous, for small $t>0$ we have $X(a,T-t) = X(a,T) - u_0 t + o(t)$. Therefore, for any small $\eps>0$, we can find $t_1$ and $t_2$ small such that 
\[(X(a,T)-X(a,T-t_1))\cdot u_0 = (X(b,T)-X(b,T-t_2))\cdot u_0=\eps.\]
We define the paths
  
\begin{minipage}{2.5in}
\begin{align*}
& \gamma_1 = \{ X(a,T-t_1+s) \colon s \in [0,t_1]\} \\
& \gamma_2 = \{ X(b,T-s) \colon s \in [0,t_2]\} \\
& \gamma_3 = \{ s X(b,t_2) + (1-s) X(a,t_1) \colon s \in [0,1]\}.
\end{align*}
\end{minipage}
\begin{minipage}{2.5in}
\setlength{\unitlength}{1in} 
\begin{picture}(2.5625,2)
\put(0,0){\includegraphics[height=2in]{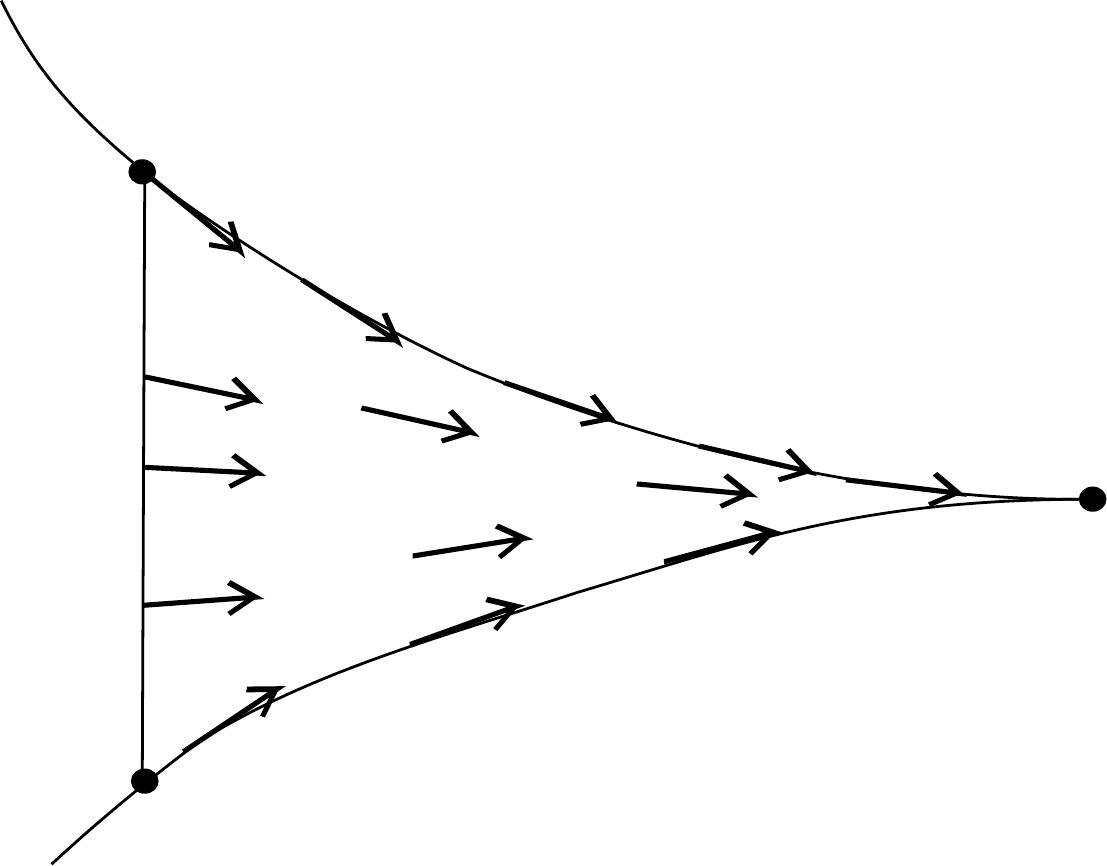}}
\put(1.12,1.19){$\gamma_1$}
\put(1.26,0.452){$\gamma_2$}
\put(0.132,0.868){$\gamma_3$}
\put(2.42,0.896){$X(a,T)$}
\put(2.42,0.696){$=X(b,T)$}
\put(0.271,1.68){$X(a,T-t_1)$}
\put(0.4,0.1){$X(b,T-t_2)$}
\put(0.7,0.8){$\Omega$}
\end{picture}
\end{minipage}

\medskip

Due to the minimality of $T$, it follows that $\gamma = \gamma_1 \cup \gamma_2 \cup \gamma_3$ is a piecewise smooth simple closed curve (let us orient it positively). Call $\Omega$ the region enclosed by $\gamma$ and let $n$ be the outward unit normal vector to $\partial \Omega$. Note that on $\gamma_1 \cup \gamma_2$ we have $u \cdot n  = 0$, and hence, applying Green's theorem in $\Omega$ yields
\begin{align*}
 0 = \int_{\Omega} \nabla \cdot u = \int_{\gamma} u \cdot n = \int_{\gamma_3} u \cdot n < 0.
\end{align*}
In the last inequality we use that on $\gamma_3$, $n = -u_0/|u_0|$ and, since $\eps$ is small and $u$ is continuous then $u$ is close to $u_0$ along $\gamma_3$.
\end{proof}

\begin{remark}
Note that Theorem \ref{lemma:unique:X} immediately implies that the transport equation
\[ \partial_t \theta + u \cdot \nabla \theta = 0,\]
has a continuous solution $\theta$ if the initial data $\theta_0$ is continuous and the drift $u$ is time-independent, continuous, divergence-free, and non-vanishing.
\end{remark}

\section{A modulus of continuity for autonomous drift-diffusion}\label{sec:regularity}

In this section we prove Theorem \ref{t:intro4}.  That is, we consider the equation 
\begin{align} 
&\partial_t \theta + u(x) \cdot \grad \theta - \Delta \theta = 0 \label{e:autonomous}\\
&\theta(0,\cdot) = \theta_0 \label{e:auto:IC}
\end{align}
in the full space $\RR^+\times \RR^2 $, with a divergence-free vector field $u\in L^1_{\rm loc}$ . It is important here that $u$ is time-independent, and it will be used in the proof that $\partial_t \theta$ satisfies the same equation. 

We say that such $u$ is divergence-free (in the distributional sense) if $\int_{\RR^2} u(x)\cdot\nabla\phi(x)dx=0$ for each $\phi\in C^\infty_0(\RR^2)$, and $\theta\in L^\infty (\RR^+\times \RR^2)$ is a distributional solution of \eqref{e:autonomous}--\eqref{e:auto:IC} if
\begin{equation} \label{6.6}
\int_{\RR^+\times \RR^2} \theta(t,x) (\phi_t(t,x) + u(x)\cdot \nabla\phi(t,x) + \Delta\phi(t,x)) dtdx = \int_{\RR^+} \theta_0(x)\phi(0,x) dx
\end{equation}
for each $\phi\in C^\infty_0(\RR^3)$.
Note that it is enough to prove the a priori estimate in Theorem \ref{t:intro4} for smooth $u$ and smooth classical solutions. Indeed, the result of the theorem then follows by a standard approximation of the drift $u$ in the $L^1_{\rm loc}$ norm by smooth divergence-free drifts $u_n$, for instance, via a mollification. 
The uniform a priori modulus of continuity for the associated smooth solutions $\theta_n$ (spatial from \eqref{e:MOC} and temporal from the last bound in Lemma \ref{l:linfty-decay}) and the maximum principle for $\theta_n$ show that  $\theta_n$ converge (along a subsequence) locally uniformly in $\RR^+\times\RR^2$ to some  $\theta\in L^\infty (\RR^+\times \RR^2)$ satisfying the same modulus of continuity.  Despite this modulus blowing up as $t\to 0$, since all the $(\theta_n,u_n)$ are uniformly bounded in  $L^\infty\times L^1_{\rm loc}$ and $\phi$ is compactly supported, it follows that the right-hand sides of \eqref{6.6} for $(\theta_n,u_n)$ converge to the right-hand side of \eqref{6.6} for $(\theta,u)$.  Thus $\theta$ is a distributional solution of \eqref{e:autonomous}--\eqref{e:auto:IC}.
Note that we do not claim that the weak solution of the problem with $u\in L^1_{\rm loc}$ but not smooth is unique, and there might be other distributional solutions, which could  be discontinuous if $u$ is not in $L^2$.  If $u$ is not at least in $L^1_{\rm loc}$, the definition of divergence-free fields and distributional solutions is questionable.

In the rest of this section we will assume that $u$ is smooth and hence so is $\theta$.  The idea of the proof is as follows. It was noted in \cite{SSSZ12} that any function in $\RR^2$ which satisfies the maximum principle on every disk and belongs to $H^1$ has to be continuous, with a logarithmic modulus of continuity depending on its $H^1$ norm only. The $H^1$ estimate, which comes from the energy inequality, is a critical quantity in terms of continuity of the solution, since any estimate in $H^{1+\eps}(\RR^2)$ would imply a H\"older modulus of continuity from classical Sobolev embeddings. It is well known that a function in $H^1(\RR^2)$ is not necessarily continuous. However, if the function solves some elliptic PDE, the maximum principle can be used to bridge that gap and obtain a logarithmic modulus of continuity. 

For the case of parabolic equations, we have two difficulties in carrying out this scheme. The first one is that the energy inequality only gives us an estimate in $L^2(H^1)$, which is far from implying continuity. However, by differentiating the equation in time (and using that $u$ depends only on $x$) we can get that the solution is actually bounded in $C(H^1)$ and Lipschitz in time. The other difficulty is more severe, and it is that the parabolic maximum principle is quite different from the elliptic one. For example, if we fix any function $f \in H^1(\RR^2)$ and extend it as constant in time, this function belongs to $C(H^1)$, is certainly Lipschitz in time, and satisfies the parabolic maximum principle without necessarily being continuous. In order to overcome this second difficulty we prove that at each fixed time $t$, the solution approximately satisfies the elliptic maximum principle (Lemma~\ref{l:elliptic-max}). In order to obtain this approximate maximum principle at each $t$, we need to use the equation again, applying a non-trivial result due to J. Nash (see Theorem~\ref{t:nash} and Corollary~\ref{c:localDG}).


\subsection{Energy estimates}

We start by discussing some estimates on the fundamental solution $G$ of \eqref{e:autonomous}. Recall that $G$ is by definition the solution of the equation
\[ \partial_t G(t,x,y) + u(x)\cdot \grad_x G(t,x,y) - \lap_x G(t,x,y) = 0,\]
with $\lim_{t \to 0} G(t,\cdot,y) = \delta_y$. With respect to $y$, it solves the dual equation
\[ \partial_t G(t,x,y) - u(y)\cdot \grad_y G(t,x,y) - \lap_y G(t,x,y) = 0,\]
with $\lim_{t \to 0} G(t,x,\cdot) = \delta_x$.

The fundamental solution is used to compute the solutions of \eqref{e:autonomous} via the formula
\[ \theta(t,x) = \int G(t,x,y) \theta_0(y) \dd y.\]
It is a simple consequence of the maximum principle that $G$ is nonnegative and $\int G(t,x,y) \dd y = 1$ for all $x$ and $t$. The following is a result by J.~Nash on the size of the fundamental solution. The remarkable fact of this estimate is that it is independent of $u$, as long as it is divergence-free.

\begin{theorem} [\bf J.~Nash~\cite{Nash58}] \label{t:nash}
If $u$ is divergence-free in $\RR^2$, then the fundamental solution $G(t,x,y)$ of \eqref{e:autonomous} satisfies the pointwise bound
\[ 0 \leq G(t,x,y) \leq C/t ,\]
where $C$ is a universal constant (independent of $u$).
\end{theorem}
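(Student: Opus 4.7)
The plan is to follow the classical Nash argument, which proceeds in two steps: first an $L^2$ decay bound via the energy/Nash-inequality ODE, then an $L^\infty$ bound via the semigroup property and Cauchy-Schwarz. The crucial reason this succeeds is that $u$ is divergence-free, so it disappears from all symmetric $L^p$-energy computations.

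First, fix $y\in\RR^2$ and set $f(t,x)=G(t,x,y)$. The maximum principle gives $f\ge 0$, and the divergence-free condition gives mass conservation $\|f(t,\cdot)\|_{L^1}=1$ for all $t>0$. Multiplying the equation by $f$ and integrating, the drift term $\int f\, u\cdot\nabla f\, dx = \tfrac12\int u\cdot\nabla(f^2)\,dx=0$, yielding the energy identity
\begin{equation*}
\tfrac{1}{2}\tfrac{d}{dt}\|f\|_{L^2}^2 = -\|\nabla f\|_{L^2}^2.
\end{equation*}
Next, I would invoke Nash's inequality in $\RR^2$, namely $\|f\|_{L^2}^4 \le C_N \|f\|_{L^1}^2\|\nabla f\|_{L^2}^2$. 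Since $\|f\|_{L^1}=1$, this yields $\|\nabla f\|_{L^2}^2 \ge C_N^{-1}\|f\|_{L^2}^4$, so $\varphi(t):=\|f(t,\cdot)\|_{L^2}^2$ satisfies the differential inequality $\varphi'(t)\le -2C_N^{-1}\varphi(t)^2$. Integrating $(1/\varphi)'\ge 2/C_N$ from $0$ to $t$ produces the decay
\begin{equation*}
\|G(t,\cdot,y)\|_{L^2}^2 \le \frac{C}{t},
\end{equation*}
with $C$ universal and in particular independent of $y$ and of $u$.

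To promote this to the pointwise $L^\infty$ bound, I would use the semigroup representation
\begin{equation*}
G(t,x,y)=\int_{\RR^2} G(t/2,x,z)\,G(t/2,z,y)\,\dd z
\end{equation*}
and apply Cauchy-Schwarz. The key observation is that $z\mapsto G(t/2,x,z)$ is itself the fundamental solution of the dual equation (as already recorded in the excerpt), which is a drift-diffusion equation with drift $-u$; since $-u$ is still divergence-free, the preceding $L^2$ estimate applies equally to it, giving $\|G(t/2,x,\cdot)\|_{L^2}^2\le 2C/t$ uniformly in $x$. Combining, $G(t,x,y)\le \|G(t/2,x,\cdot)\|_{L^2}\|G(t/2,\cdot,y)\|_{L^2}\le C/t$, which is the desired bound.

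The only real subtlety is to rigorously justify the energy identity and the symmetry/semigroup identities for $G$ under our standing assumption that $u$ is smooth (as noted in the discussion preceding the theorem, this is the setting in which we need the estimate); this relies on standard decay of the fundamental solution at spatial infinity and standard duality for the linear parabolic operator $\partial_t + u\cdot\nabla - \Delta$, and so is routine. Once the estimate is established for smooth $u$, it transfers to the approximations used in the proof of Theorem~\ref{t:intro4} by the $L^1_{\rm loc}$ approximation scheme already set up earlier in this section.
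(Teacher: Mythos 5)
Your proof is correct and follows essentially the same argument as the paper: the energy identity with the drift term vanishing by incompressibility, Nash's inequality to obtain the ODE $\varphi' \le -c\varphi^2$ and hence $L^2$-decay, then the semigroup representation plus Cauchy--Schwarz (together with the observation that the dual equation has drift $-u$, still divergence-free) to upgrade to the pointwise bound.
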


The theorem was not stated explicitly with these assumptions by Nash, therefore we include the proof below. It is a straight-forward adaptation of the proof in \cite{Nash58} to our setting.

\begin{proof}
We compute the evolution of the $L^2$ norm of $G$. For any fixed $y$, let $E(t) =\|G(t,\cdot,y)\|_{L^2}^2$ (which is finite for $t>0$).
We have that
\begin{align*}
E'(t) &= 2\int_{\RR^2} G(t,x,y) (-u \cdot \nabla_x G + \lap_x G) \dd x
= -2\int_{\RR^2} |\nabla_x G(t,x,y)|^2 \dd x 
\leq -C^{-1} E(t)^2.
\end{align*}
for some universal $C>0$.  In the last inequality we used the interpolation inequality ({\it Nash's inequality})
\[ c \|f\|_{L^2}^{1+2/d} \leq \|f\|_{\dot H^1} \|f\|_{L^1}^{2/d},\]
together with the fact that $\|G(t,\cdot,y)\|_{L^1}=1$ for all $t$ and $y$.

The ODE $E'(t) \leq -E(t)^2/C$ implies that $E(t) \leq C/t$. Thus, we have shown a universal bound $\|G(t,\cdot,y)\|_{L^2}^2 \leq C/t$. Since the estimate does not depend on $u$ (in particular it is the same if we replace $u$ by $-u$) we also have $\|G(t,x, \cdot)\|_{L^2}^2 \leq C/t$.

The pointwise estimate now follows from the energy estimate via the semigroup property of $G$:
\begin{align*}
G(t,x,y) = \int_{\RR^2} G(t/2,x,z) G(t/2,z,y) \dd z  
\leq \|G(t/2,x,\cdot)\|_{L^2} \|G(t/2,\cdot,y)\|_{L^2} \leq C/t.
\end{align*}
The proof is finished.
\end{proof}


\begin{lemma} \label{l:linfty-decay}
Let $\theta$ be a smooth solution of \eqref{e:autonomous} in $\RR_+\times \RR^2$. Then 
\begin{align*}
\| \theta(t,\cdot) \|_{L^1(\RR^2)} &\leq \| \theta_0 \|_{L^1(\RR^2)} \\
\| \theta(t,\cdot) \|_{L^\infty(\RR^2)} &\leq \| \theta_0 \|_{L^\infty(\RR^2)} \\
\| \partial_t \theta(t,\cdot) \|_{L^1(\RR^2)} &\leq  {\left( 1+ \| u \|_{L^1_{\rm loc}(\RR^2)}   \right)  \| \theta_0 \|_{W^{4,1}(\RR^2)}} \\
\|\partial_t \theta(t,\cdot) \|_{L^\infty(\RR^2)} &\leq { \| \theta_0 \|_{C^2(\RR^2)}+ C {t}^{-1} \|u \|_{L^1_{\rm loc}(\RR^2)} \| \theta_0 \|_{W^{4,1}(\RR^2)}  }
\end{align*}
for some universal $C>0$.
\end{lemma}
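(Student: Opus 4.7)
The plan has three steps. \textbf{Step 1 (the two bounds on $\theta$).} The fundamental solution $G$ is nonnegative and, because $u$ is divergence-free, is a Markov kernel in \emph{both} variables: mass conservation for \eqref{e:autonomous} gives $\int G(t,x,y)\,dy=1$, while integrating the dual equation (whose drift $-u$ is also divergence-free) in $x$ gives $\int G(t,x,y)\,dx=1$. Writing $\theta(t,\cdot)=\int G(t,\cdot,y)\theta_0(y)\,dy$ and applying Fubini together with either identity yields $\|\theta(t,\cdot)\|_{L^1}\le\|\theta_0\|_{L^1}$ and $\|\theta(t,\cdot)\|_{L^\infty}\le\|\theta_0\|_{L^\infty}$ at once.

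\textbf{Step 2 (setup for $\partial_t\theta$).} Since $u=u(x)$ does not depend on time, differentiating \eqref{e:autonomous} in $t$ shows that $v:=\partial_t\theta$ solves the same drift-diffusion equation with initial datum
\[
v_0:=\Delta\theta_0-u\cdot\nabla\theta_0.
\]
Hence $v(t,\cdot)=\int G(t,\cdot,y)v_0(y)\,dy$, and all that remains is to combine bounds on $v_0$ with the kernel estimates for $G$. The piece $\Delta\theta_0$ is trivially bounded in $L^1$ by $\|\theta_0\|_{W^{4,1}}$ and in $L^\infty$ by $\|\theta_0\|_{C^2}$.

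\textbf{Step 3 (controlling $u\cdot\nabla\theta_0$ and assembling the last two bounds).} I cover $\RR^2$ by unit balls $\{B_1(z_i)\}_{i\in\ZZ^2}$ with bounded overlap and estimate
\[
\|u\cdot\nabla\theta_0\|_{L^1}\le\sum_i\|u\|_{L^1(B_1(z_i))}\|\nabla\theta_0\|_{L^\infty(B_1(z_i))}\le\|u\|_{L^1_{\rm loc}}\sum_i\|\nabla\theta_0\|_{L^\infty(B_1(z_i))}.
\]
The two-dimensional Sobolev embedding $W^{3,1}(B_1(z_i))\hookrightarrow L^\infty(B_1(z_i))$ (which holds since $3\cdot 1>2$) applied to $\nabla\theta_0$, plus bounded overlap, gives
\[
\sum_i\|\nabla\theta_0\|_{L^\infty(B_1(z_i))}\le C\sum_i\|\nabla\theta_0\|_{W^{3,1}(B_1(z_i))}\le C\|\theta_0\|_{W^{4,1}(\RR^2)}.
\]
For the $L^1$ bound on $\partial_t\theta$, apply Step 1's $L^1$ contraction to $v$ to get $\|\partial_t\theta(t,\cdot)\|_{L^1}\le\|v_0\|_{L^1}$, and then use the two estimates above. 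For the $L^\infty$ bound, split
\[
\partial_t\theta(t,x)=\int G(t,x,y)\Delta\theta_0(y)\,dy-\int G(t,x,y)\,u(y)\cdot\nabla\theta_0(y)\,dy;
\]
bound the first integral by $\|\Delta\theta_0\|_{L^\infty}\le\|\theta_0\|_{C^2}$ using $\int G\,dy=1$, and bound the second by Nash's pointwise estimate $G\le C/t$ (Theorem \ref{t:nash}) times $\|u\cdot\nabla\theta_0\|_{L^1}$, which yields the $Ct^{-1}\|u\|_{L^1_{\rm loc}}\|\theta_0\|_{W^{4,1}}$ term. The reason one must \emph{not} simply apply the $L^\infty$ maximum principle to $v$ is that $\|v_0\|_{L^\infty}$ involves $\|u\|_{L^\infty}$, which is not controlled.

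The main obstacle is the third bound, i.e.\ coupling the merely locally-$L^1$ drift with $\nabla\theta_0$ in a way that converges; this is precisely where the $W^{4,1}$ hypothesis is used via the covering-and-Sobolev step, and it is the reason the same argument under the hypothesis $u\in L^1$ (rather than $L^1_{\rm loc}$) instead needs only the pointwise bound $\|\nabla\theta_0\|_{L^\infty}\le\|\theta_0\|_{C^2}$, allowing the initial-data regularity to be lowered to $C^2\cap W^{2,1}$ as advertised in the remark.
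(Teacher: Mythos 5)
Your proposal is correct and follows essentially the same route as the paper: the two bounds on $\theta$ are the $L^1$/$L^\infty$ maximum principles (equivalently, the bistochastic property of $G$ for divergence-free drift), $\partial_t\theta$ is observed to solve the same equation with initial datum $\Delta\theta_0-u\cdot\nabla\theta_0$, the term $u\cdot\nabla\theta_0$ is controlled in $L^1$ by covering $\RR^2$ with unit cells and applying a local Sobolev embedding to $\nabla\theta_0$, and the $L^\infty$ bound on $\partial_t\theta$ is obtained by splitting off the $\Delta\theta_0$ part and applying Nash's estimate $G\le C/t$ to the rest. The only cosmetic differences are balls-with-bounded-overlap versus unit cubes and the phrasing of the Sobolev step.
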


\begin{proof}
The first two estimates are just the maximum principle for parabolic equations, which also holds for all $L^p$ norms with $p<\infty$ because $u$ is divergence-free.

Since the drift is time independent,  $\partial_t \theta$ solves the same equation
\[ \partial_t ( \partial_t \theta) + u \cdot \grad (\partial_t  \theta) - \lap (\partial_t \theta) = 0.\]
The third estimate follows now directly from  $\partial_t \theta(0,\cdot) = -u \cdot \grad \theta_0 + \lap \theta_0$ and the maximum principle, using also
\begin{align} 
\|u\cdot \nabla\theta_0\|_{L^1} 
&\le \|u\|_{L^1_{\rm loc}} \sum_{n\in\ZZ^2} \|\nabla \theta_0\|_{L^\infty(n+[0,1]^2)} \notag\\
&{\le c \|u\|_{L^1_{\rm loc}}\left(  \|\nabla \theta_{0}\|_{L^{1}} + \|\nabla^{4} \theta_0\|_{L^1} \right) \le c \|u\|_{L^1_{\rm loc}}  \|\theta_0\|_{W^{4,1}}.}
\label{6.1}
\end{align} 
{Note that if instead of $u \in L^{1}_{\rm loc}$ we assume that $u \in L^{1}$, then one may lower the regularity assumption on the initial data to $\theta_{0} \in C^{2} \cap W^{2,1}$.}
Finally, if $G$ is the fundamental solution, then
\begin{align*}
\partial_t \theta(t,x) &= \int G(t,x,y) \partial_t \theta(0,y) \dd y\\
&= \int G(t,x,y) (-u(y) \cdot \grad \theta_0(y) + \lap \theta_0(y)) \dd y \\
&\leq \| \lap \theta_0 \|_{L^\infty} + \int G(t,x,y) \left| u(y) \cdot \grad \theta_0(y)\right| \dd y \\
&\leq {\|\theta_0\|_{C^2} +\| G(t,\cdot,\cdot) \|_{L^\infty} \|u \cdot \nabla \theta_{0}\|_{L^1} } \\
& \leq { \|\theta_0\|_{C^2}+  C t^{-1} \|u\|_{L^1_{\rm loc}} \| \theta_0 \|_{W^{4,1}(\RR^2)}  },
\end{align*}
where in the last inequality we applied Theorem~\ref{t:nash}, and in the one before we used \eqref{6.1}. 
\end{proof}

By Lemma \ref{l:linfty-decay}, both $\theta$ and $\partial_t \theta$ are bounded in $L^1 \cap L^\infty$ for positive time, and in particular, they are bounded in $L^2$ for every $t>0$. The following lemma can then be applied to obtain estimates in space-time norms.

\begin{lemma} \label{l:energy-ineq}
Let $\theta$ be a smooth solution of \eqref{e:autonomous} in $\RR_+\times \RR^2$.  Then
\begin{align*}
\| \theta \|_{L^2((t_0,\infty),\dot H^1)} &\leq \|\theta(t_0,\cdot)\|_{L^2} \\
\| \partial_t \theta \|_{L^2((t_0,\infty),\dot H^1)} &\leq \|\partial_t \theta(t_0,\cdot)\|_{L^2} \\
\| \theta \|_{L^\infty((t_0,\infty),\dot H^1)} &\leq C \left( \|\theta(t_0,\cdot)\|_{L^2} + \|\partial_t \theta(t_0,\cdot)\|_{L^2} \right)
\end{align*}
for some universal $C>0$.
\end{lemma}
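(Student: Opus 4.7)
My plan is to establish the three inequalities in order, with the third being the least routine and the one where time-independence of $u$ is essentially used.

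For the first estimate, I would multiply \eqref{e:autonomous} by $\theta$ and integrate over $\RR^2$. Because $u$ is divergence-free and $\theta$ decays at infinity (so all integrations by parts are justified), the drift term vanishes:
\[
\int_{\RR^2} \theta\, u\cdot\nabla\theta\, dx = \tfrac12 \int_{\RR^2} u\cdot\nabla(\theta^2)\, dx = 0,
\]
while $-\int \theta\,\Delta\theta = \|\nabla\theta\|_{L^2}^2$. This gives the standard energy identity
\[
\tfrac12 \tfrac{d}{dt}\|\theta\|_{L^2}^2 + \|\nabla\theta\|_{L^2}^2 = 0.
\]
Integrating from $t_0$ to $\infty$ and dropping the (nonnegative) boundary term at $+\infty$ yields $\|\theta\|_{L^2((t_0,\infty),\dot H^1)}^2 \leq \tfrac12\|\theta(t_0,\cdot)\|_{L^2}^2$, which is stronger than the claimed bound.

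For the second estimate, the key observation (already used in Lemma \ref{l:linfty-decay}) is that since $u$ is time-independent, $\partial_t\theta$ satisfies the very same drift-diffusion equation \eqref{e:autonomous}. The previous argument applied to $\partial_t\theta$ in place of $\theta$ yields $\|\partial_t\theta\|_{L^2((t_0,\infty),\dot H^1)} \leq \|\partial_t\theta(t_0,\cdot)\|_{L^2}$.

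The third estimate is the interesting one; here is the trick. At any fixed time $t\geq t_0$, integration by parts followed by the equation gives
\[
\|\nabla\theta(t,\cdot)\|_{L^2}^2 = -\int_{\RR^2} \theta\,\Delta\theta\, dx = -\int_{\RR^2} \theta \bigl(\partial_t\theta + u\cdot\nabla\theta\bigr) dx = -\int_{\RR^2} \theta\,\partial_t\theta\, dx,
\]
where the drift term vanishes exactly as above by the divergence-free condition. Applying Cauchy--Schwarz,
\[
\|\nabla\theta(t,\cdot)\|_{L^2}^2 \leq \|\theta(t,\cdot)\|_{L^2}\,\|\partial_t\theta(t,\cdot)\|_{L^2}.
\]
Now both $\|\theta(\cdot,\cdot)\|_{L^2}$ and $\|\partial_t\theta(\cdot,\cdot)\|_{L^2}$ are non-increasing in time---the former by the energy identity above, the latter by the same identity applied to $\partial_t\theta$ (which again exploits time-independence of $u$). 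Consequently, for $t\geq t_0$,
\[
\|\nabla\theta(t,\cdot)\|_{L^2}^2 \leq \|\theta(t_0,\cdot)\|_{L^2}\,\|\partial_t\theta(t_0,\cdot)\|_{L^2} \leq \tfrac12\bigl(\|\theta(t_0,\cdot)\|_{L^2} + \|\partial_t\theta(t_0,\cdot)\|_{L^2}\bigr)^2,
\]
which upon taking square roots yields the desired bound with $C=1/\sqrt{2}$.

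The only conceptual obstacle is the third estimate, and overcoming it is the simple but crucial observation that \emph{two} uses of the divergence-free condition convert $\|\nabla\theta\|_{L^2}^2$ into $-\int\theta\,\partial_t\theta\,dx$; combined with the $L^2$ dissipativity of $\partial_t\theta$ (which is special to time-independent $u$), this trades a spatial derivative for a temporal one at a single time slice. The first two estimates are entirely routine energy inequalities, and no regularization argument is needed beyond the smoothness and decay already assumed.
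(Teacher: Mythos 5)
Your argument for the first two estimates is exactly the paper's: integrate the energy identity for $\theta$ and for $\partial_t\theta$ (the latter satisfying the same equation because $u$ is autonomous). For the third estimate, however, you take a genuinely different and nice route. The paper observes that the first two bounds say $t\mapsto\theta(t,\cdot)$ lies in $H^1((t_0,\infty);\dot H^1)$, and then invokes the one-dimensional Sobolev embedding $H^1\hookrightarrow C^{1/2}\subset L^\infty$ to conclude. You instead work at a single time slice: integrating by parts, substituting $\Delta\theta = \partial_t\theta + u\cdot\nabla\theta$, and using $\nabla\cdot u = 0$ twice gives the pointwise-in-time identity $\|\nabla\theta(t,\cdot)\|_{L^2}^2 = -\int\theta\,\partial_t\theta\,dx$, and then Cauchy--Schwarz plus monotonicity of $\|\theta(t,\cdot)\|_{L^2}$ and $\|\partial_t\theta(t,\cdot)\|_{L^2}$ (both from the energy identity, again using time-independence of $u$) closes the estimate with an explicit constant. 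Your approach is more elementary (no Sobolev embedding needed) and produces a clean constant, whereas the paper's approach is shorter to state and actually yields the slightly stronger information that $t\mapsto\|\nabla\theta(t,\cdot)\|_{L^2}$ is H\"older-$\tfrac12$ in $t$, which is not needed for the lemma. Both are correct; yours is a valid alternative.
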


\begin{proof}
The first two estimates are the classical energy estimates for the equations for $\theta$ and $\partial_t \theta$ respectively. Note that since $u$ is divergence-free, the drift term has no effect on the energy estimate. The first and second inequalities can be interpreted as that $\theta \in H^1((0,\infty),\dot H^1)$. The third inequality follows therefore from the one dimensional Sobolev embedding $H^1 \hookrightarrow C^{1/2}$.
\end{proof}

\begin{remark}
The bounds on $u$ in $L^1_{\rm loc}$ and $\theta_0$ in ${C^2 \cap W^{4,1}}$ {(alternatively, on $u\in L^{1}$ and $\theta_{0} \in C^2 \cap W^{2,1}$)} are only used to obtain estimates on $\partial_t \theta$ in $L^{\infty}((t_{0},\infty);L^{\infty}(\RR^{2}))$ and on $\nabla \theta$ in $L^{\infty}((t_{0},\infty);L^{2}(\RR^{2}))$ for any $t_{0}>0$.  These estimates can also be obtained in terms of, for instance, 
\begin{align*}
\| \theta_0\|_{L^2} = A_{1} < \infty \qquad \mbox{and} \qquad
\|u\cdot \nabla \theta_{0}  - \Delta \theta_{0}\|_{L^2} = A_{2} < \infty.
\end{align*}
Indeed, from $\partial_{t} \theta(t,x) = \int G(t,x,y) \partial_{t} \theta(0,y) \dd y$ and  $\|G(t,x,\cdot)\|_{L^{2}} \leq C t^{-1/2}$, it follows that  $\| \partial_{t} \theta(t,\cdot)\|_{L^{\infty}} \leq C t^{-1/2}A_{2}$. Also, the maximum principle for the equation obeyed by $\partial_{t} \theta$ implies $\| \partial_{t} \theta(t,\cdot)\|_{L^{2}} \leq A_{2}$, and from the energy inequality we get an estimate on $\partial_{t} \theta$ in $L^{2}((t_{0},\infty); \dot{H}^{1}(\RR^{2}))$. The bounds in Lemmas~\ref{l:linfty-decay} and \ref{l:energy-ineq} follow, and they depend on  the (smooth) drift $u$ only implicitly through $A_{2}$.
\end{remark}

\subsection{A maximum principle on time slices}

In this sub-section we denote $B_r=B_r(0)\subseteq\RR^2$.

\begin{lemma} \label{t:localDG}
There exists $h>0$ such that for any smooth divergence-free $u$ on $B_1$, the solution of\begin{align*}
\partial_t \phi + u \cdot \grad \phi - \lap \phi &= 0 \qquad \text{in } (-h,0) \times B_1  \\ 
\phi(-h,\cdot ) &= 1 \qquad \text{in } B_1\\
\phi &= 0 \qquad \text{in } (-h,0) \times  \partial B_1
\end{align*}
satisfies $\sup_{B_1} \phi(0,\cdot) \leq 1/2$.
\end{lemma}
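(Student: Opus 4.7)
The strategy is to reduce the claim to Nash's universal pointwise bound on the fundamental solution (Theorem~\ref{t:nash}) by comparing $\phi$ with the analogous solution on all of $\RR^2$. The decisive feature is that Nash's constant depends only on the divergence-free condition and the dimension, not on any norm of $u$; this is exactly what delivers a universal $h$.

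First, I would extend $u$ to a smooth, compactly supported, divergence-free vector field $\tilde u$ on $\RR^2$. In two dimensions this is immediate via the stream function: write $u = \nabla^\perp \psi$ on $B_1$ for some smooth $\psi$, extend $\psi$ smoothly to $\RR^2$, multiply by a cutoff that equals $1$ on $B_1$ and vanishes outside $B_2$, and set $\tilde u = \nabla^\perp \tilde\psi$. Let $\tilde\phi$ solve
\[
\partial_t \tilde\phi + \tilde u \cdot \nabla \tilde\phi - \Delta \tilde\phi = 0 \quad \text{in } (-h,0)\times\RR^2, \qquad \tilde\phi(-h,\cdot) = \chi_{B_1}.
\]
The maximum principle yields $\tilde\phi \geq 0$ everywhere, in particular on $(-h,0)\times \partial B_1$ where $\phi$ vanishes. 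Since also $\tilde\phi(-h,\cdot) = 1 = \phi(-h,\cdot)$ on $B_1$ and both functions solve the same equation on $(-h,0)\times B_1$ (because $\tilde u = u$ there), the parabolic comparison principle gives $\phi \leq \tilde\phi$ on $(-h,0)\times \bar B_1$.

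Next, let $G$ denote the fundamental solution of $\partial_t + \tilde u \cdot \nabla - \Delta$ on $\RR^2$. Since $\tilde u$ is divergence-free on all of $\RR^2$, Theorem~\ref{t:nash} furnishes a universal constant $C$ with $G(h,x,y) \leq C/h$. Consequently
\[
\sup_{x \in B_1} \phi(0,x) \;\leq\; \sup_{x \in B_1} \tilde\phi(0,x) \;=\; \sup_{x \in B_1} \int_{B_1} G(h,x,y)\, dy \;\leq\; \frac{C\,|B_1|}{h} \;=\; \frac{C\pi}{h}.
\]
Choosing $h = 2C\pi$ completes the proof.

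The only genuine subtlety is producing a divergence-free extension of $u$ to $\RR^2$ so that Theorem~\ref{t:nash} is legitimately applicable; the stream function construction handles this cleanly in $d=2$. The remainder is just a comparison and the universal Nash bound on $G$, both of which are routine.
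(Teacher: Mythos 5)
Your proof is correct and follows essentially the same route as the paper: extend to $\RR^2$, compare with the Cauchy solution starting from $\chi_{B_1}$, and invoke Nash's universal bound on the fundamental solution. In fact you are slightly more careful than the paper on one point — the paper casually says the extension of $u$ can be arbitrary, whereas Nash's theorem requires the extension to be divergence-free on all of $\RR^2$, which your stream-function construction explicitly guarantees.
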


\begin{proof}
The function $\phi$ is nonnegative, so if we extend it by zero outside of $B_1$ for all times $t$  (and extend $u$ in any way we like), we obtain a sub-solution of the equation \eqref{e:autonomous} in space $(-h,0) \times \RR^2$. Therefore, by the comparison principle, it has to stay below the actual solution with the same initial values at $\{-h\} \times \RR^2$:
\[ \phi(0,x) \leq \int G(h,x,y) \chi_{B_1}(y) \dd y \leq  C h^{-1},\]
where we applied Theorem \ref{t:nash} in the last inequality. So $h=2C$ works.
\end{proof}

\begin{corollary} \label{c:localDG}
With $h$ from Lemma \ref{t:localDG}, we have that for any $a>b$, the solution   of
\begin{align*}
\partial_t \phi + u \cdot \grad \phi - \lap \phi &= 0 \qquad \text{in }  (-h r^2,0) \times B_r \\ 
\phi(-h r^2,\cdot) &\leq a \qquad \text{in } B_r \\
\phi &\leq b \qquad \text{in } (-hr^2,0) \times \partial B_r 
\end{align*}
satisfies $\sup_{B_r} \phi(0,\cdot) \leq (a+b)/2$.
\end{corollary}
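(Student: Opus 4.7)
The plan is to reduce Corollary \ref{c:localDG} to Lemma \ref{t:localDG} via two standard transformations: a parabolic rescaling that shrinks $B_r$ to $B_1$, and an affine shift that normalizes the initial and boundary bounds to $1$ and $0$.

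First I would define the rescaled drift $\tilde u(x) = r\, u(rx)$, which is still smooth and divergence-free on $B_1$, and the rescaled unknown $\tilde\phi(t,x) = \phi(r^2 t, rx)$, which solves
\begin{equation*}
\partial_t \tilde\phi + \tilde u \cdot \nabla \tilde\phi - \Delta \tilde\phi = 0 \quad \text{on } (-h,0)\times B_1,
\end{equation*}
with $\tilde\phi(-h,\cdot) \le a$ on $B_1$ and $\tilde\phi \le b$ on $(-h,0)\times\partial B_1$. Since $a > b$, I then set $\psi = (\tilde\phi - b)/(a-b)$; the constants drop out of the linear equation, so $\psi$ solves the same PDE with $\psi(-h,\cdot) \le 1$ on $B_1$ and $\psi \le 0$ on $(-h,0)\times\partial B_1$.

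Next, let $\phi^*$ be the solution supplied by Lemma~\ref{t:localDG} applied to the drift $\tilde u$, so that $\phi^* = 1$ on $\{-h\}\times B_1$, $\phi^* = 0$ on $(-h,0)\times\partial B_1$, and $\sup_{B_1}\phi^*(0,\cdot)\le 1/2$. The difference $\psi - \phi^*$ satisfies the same linear drift-diffusion equation (with smooth, divergence-free drift and no zeroth-order term), and is nonpositive on the parabolic boundary $\{-h\}\times B_1 \cup (-h,0]\times \partial B_1$. The standard parabolic maximum principle on the bounded cylinder therefore gives $\psi \le \phi^*$ throughout $(-h,0]\times B_1$, and in particular $\sup_{B_1}\psi(0,\cdot) \le 1/2$.

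Unwinding the definitions yields $\sup_{B_1}\tilde\phi(0,\cdot) \le (a+b)/2$, i.e.\ $\sup_{B_r}\phi(0,\cdot) \le (a+b)/2$, which is the desired conclusion. There is no real obstacle: the only point worth noting is that both transformations are compatible with the divergence-free hypothesis (so Lemma~\ref{t:localDG} still applies after rescaling) and with the comparison principle (since subtracting a constant and scaling by a positive constant preserve the structure of the equation).
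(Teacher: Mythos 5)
Your proof is correct and takes essentially the same approach as the paper: rescale the cylinder $(-hr^2,0)\times B_r$ to $(-h,0)\times B_1$ (noting that $\tilde u(x)=r\,u(rx)$ remains smooth and divergence-free so that Lemma \ref{t:localDG} applies), apply an affine normalization to reduce the data bounds $a,b$ to $1,0$, and invoke the parabolic comparison principle against the barrier supplied by Lemma \ref{t:localDG}. The paper phrases this as a single comparison of $\phi$ against the shifted and rescaled barrier $b+(a-b)\tilde\phi$, but that is algebraically the same argument.
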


\begin{proof}
Compare  $\phi$ with $b + (a-b) \tilde \phi(r^2 t,rx) /2$, where $\tilde \phi$ is the function from Lemma \ref{t:localDG}. 
\end{proof}

We recall the maximum principle for parabolic equations:
\begin{proposition}\label{p:maxprin}
For any parabolic cylinder $Q = (-h,0) \times B_r $, we have that 
\[ \sup_{Q} \theta = \sup_{\partial_p Q} \theta,\] 
where $\partial_p Q$ denotes the parabolic boundary
\[ \partial_p Q =  \left( \{-h\} \times B_r\right) \cup \left( (-h,0) \times \partial B_r \right).\]
\end{proposition}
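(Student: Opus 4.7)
The plan is to apply the classical weak parabolic maximum principle via a small linear-in-time perturbation. Since $\theta$ is smooth (hence continuous on $\overline{Q} = [-h,0]\times \overline{B_r}$), we have $\sup_Q \theta = \sup_{\overline Q} \theta$, and the reverse inequality $\sup_{\partial_p Q}\theta \le \sup_{\overline Q}\theta$ is trivial from $\partial_p Q \subset \overline Q$. So the only content is to show $\sup_{\overline Q}\theta \le \sup_{\partial_p Q}\theta$.

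For any $\epsilon >0$, set $\theta_\epsilon(t,x) := \theta(t,x) - \epsilon t$. Using \eqref{e:autonomous} one computes
\[
\partial_t \theta_\epsilon + u(x)\cdot \nabla \theta_\epsilon - \Delta \theta_\epsilon \;=\; -\epsilon \;<\; 0 \qquad\text{on } Q.
\]
Since $\overline Q$ is compact and $\theta_\epsilon$ is continuous, the supremum is attained at some $(t_0, x_0) \in \overline Q$. Assume for contradiction that $(t_0, x_0)\notin \partial_p Q$; then $x_0 \in B_r$ and $t_0 \in (-h, 0]$. At such a spatial interior maximum, $\nabla \theta_\epsilon(t_0,x_0) = 0$ and $\Delta \theta_\epsilon(t_0,x_0) \le 0$. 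For the time derivative, $\partial_t \theta_\epsilon(t_0,x_0) = 0$ when $t_0 \in (-h,0)$ (interior critical point), while if $t_0 = 0$, comparing with nearby $t<0$ gives $\partial_t\theta_\epsilon(0,x_0) \ge 0$ (one-sided). In either case,
\[
-\epsilon \;=\; \partial_t \theta_\epsilon + u\cdot \nabla \theta_\epsilon - \Delta \theta_\epsilon \;\ge\; 0
\]
at $(t_0, x_0)$, a contradiction. Thus the maximum of $\theta_\epsilon$ over $\overline Q$ is attained on $\partial_p Q$.

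Finally, since $|\theta - \theta_\epsilon| = |\epsilon t| \le \epsilon h$ on $\overline Q$, we conclude
\[
\sup_{\overline Q} \theta \;\le\; \sup_{\overline Q} \theta_\epsilon + \epsilon h \;=\; \sup_{\partial_p Q} \theta_\epsilon + \epsilon h \;\le\; \sup_{\partial_p Q} \theta + 2\epsilon h,
\]
and sending $\epsilon \to 0^+$ finishes the proof. I do not anticipate any genuine obstacle; the argument is entirely standard, and in particular the divergence-free condition on $u$, crucial for the energy-type estimates earlier in the section, is not used here, as the drift enters the operator only through $u\cdot\nabla \theta_\epsilon$, which vanishes at any spatial interior maximum.
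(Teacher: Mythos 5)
Your proof is correct; it is the standard textbook argument for the weak parabolic maximum principle via the $\epsilon t$ perturbation. Note that the paper itself gives no proof here — it explicitly ``recalls'' this classical fact and simply states it — so there is no paper argument to compare against; your write-up is a valid and entirely standard way to supply the missing details. One tiny cosmetic remark: since $\theta_\epsilon = \theta - \epsilon t \ge \theta$ on $\overline Q$ (as $t\le 0$), you already have $\sup_{\overline Q}\theta \le \sup_{\overline Q}\theta_\epsilon$ for free, so the final chain gives $\sup_{\overline Q}\theta \le \sup_{\partial_p Q}\theta + \epsilon h$ with a single $\epsilon h$ rather than $2\epsilon h$; this is immaterial after sending $\epsilon\to 0^+$. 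Your observation that the divergence-free structure of $u$ plays no role here is also accurate — the drift term drops out at a spatial critical point regardless.
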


The following lemma gives a relation on each time slice which approximates the maximum principle for elliptic equations.

\begin{lemma}[\bf A maximum principle on time slices] \label{l:elliptic-max}
Let $h$ be the constant from Lemma \ref{t:localDG} and assume that  $\theta$ is a solution of \eqref{e:autonomous} in $[-T,0] \times B_{\sqrt {T/h}}$, with $\partial_t \theta$ bounded. Then for any  $R<\sqrt{T/h}$ we have
\begin{align*}
\sup_{B_R} \theta(0,\cdot) \le \sup_{\partial B_R} \theta(0,\cdot) + 2\|\partial_t \theta\|_{L^\infty} hR^2, \\
\inf_{B_R} \theta(0,\cdot) \ge \inf_{\partial B_R} \theta(0,\cdot) - 2\|\partial_t \theta\|_{L^\infty} hR^2.  
\end{align*}
\end{lemma}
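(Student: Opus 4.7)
The plan is to apply Corollary~\ref{c:localDG} on the parabolic cylinder $Q = (-hR^2, 0) \times B_R$, which is contained in the domain $[-T,0] \times B_{\sqrt{T/h}}$ precisely because of the hypothesis $R < \sqrt{T/h}$. The key observation is that the bound on $\partial_t\theta$ lets us transfer a control on $\theta(0,\cdot)$ to a control on $\theta$ at earlier times: for any $t \in [-hR^2, 0]$ and any $x$,
\begin{equation*}
\theta(t, x) \le \theta(0, x) + K h R^{2}, \qquad \text{where } K = \|\partial_t\theta\|_{L^\infty}.
\end{equation*}

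First I would set $M = \sup_{B_R}\theta(0,\cdot)$ and $m = \sup_{\partial B_R}\theta(0,\cdot)$, and then bound $\theta$ on each component of the parabolic boundary $\partial_p Q$ by applying the pointwise Lipschitz-in-time estimate above. On the initial time slice $\{-hR^2\} \times B_R$, $\theta(-hR^2, x) \le M + K h R^2$, so we take $a := M + K h R^2$. On the lateral piece $(-hR^2, 0) \times \partial B_R$, $\theta(t, x) \le m + K h R^2$, so we take $b := m + K h R^2$. Corollary~\ref{c:localDG} then yields
\begin{equation*}
\sup_{B_R} \theta(0,\cdot) \le \frac{a+b}{2} = \frac{M + m}{2} + K h R^2,
\end{equation*}
i.e., $M \le \tfrac{M+m}{2} + K h R^2$, which after rearrangement gives exactly $M \le m + 2 K h R^2$, the first claimed inequality.

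The second (infimum) inequality follows by running the same argument on $-\theta$, which solves the same equation (the drift term is linear and $u$ does not change), noting that $\|\partial_t(-\theta)\|_{L^\infty} = \|\partial_t\theta\|_{L^\infty}$. There is no real obstacle here beyond setting up the right constants for Corollary~\ref{c:localDG}; the only subtlety is recognizing that one should not apply Proposition~\ref{p:maxprin} directly (which would merely give the trivial bound $M \le M + KhR^2$), but instead exploit the strict contraction factor $\tfrac12$ supplied by Corollary~\ref{c:localDG}, whose origin is the heat-kernel upper bound from Nash's theorem (Theorem~\ref{t:nash}). This factor of $\tfrac12$ is what converts a trivial inequality into the desired sharp estimate with constant $2KhR^2$.
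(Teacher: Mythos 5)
Your proof is essentially the same as the paper's: both combine the Lipschitz-in-time bound from $\|\partial_t\theta\|_{L^\infty}$ with the contraction factor $\tfrac12$ supplied by Corollary~\ref{c:localDG}, and your algebra is correct. The one small point you should add is that Corollary~\ref{c:localDG} requires $a>b$, which in your setup is $M>m$; you need to note separately that when $M\le m$ the conclusion is trivial (this is the analogue of the paper's explicit treatment of the case $a\le b$ via Proposition~\ref{p:maxprin}).
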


\begin{proof}
We only prove the first claim, the second  follows by considering $-\theta$ instead of $\theta$.
Let $a = \sup_{B_R} \theta(-hR^2,\cdot)$ and $b = \sup_{\partial B_R} \theta(0,\cdot) + \|\theta_t\|_{L^\infty} hR^2$.  Then obviously $b\ge \max_{(-hR^2,0) \times \partial B_R } \theta$. If $a\le b$,  the maximum principle immediately yields $\sup_{B_R} \theta(0,\cdot) \le b$ so let us assume $a>b$.

On one hand, from Corollary \ref{c:localDG} we have that 
\[ \sup_{B_R} \theta(0,\cdot) \leq \frac{a+b}{2} \leq \frac{a + \sup_{\partial B_R} \theta(0,\cdot) + \|\theta_t\|_{L^\infty} hR^2} 2.\]
On the other hand, from  boundedness of $\partial_t \theta$ we also have
\[ a \leq \sup_{B_R} \theta(0,\cdot) + \|\theta_t\|_{L^\infty} hR^2.\]
Combining the last two inequalities, we obtain the first claim.
\end{proof}

\subsection{A local modulus of continuity and the proof of Theorem~\ref{t:intro4}}

From Lemmas~\ref{l:linfty-decay} and~\ref{l:energy-ineq}, we have that the hypotheses of Theorem~\ref{t:intro4} imply that $\|\theta\|_{L^\infty}$, $\|\partial_t \theta(t,\cdot)\|_{L^\infty}$, and $\|\theta(t,\cdot)\|_{H^1}$ are bounded on any time-interval $[t_0,\infty)$ with $t_0>0$. The proof will be finished by using the following local result.  It applies Lemma \ref{l:elliptic-max} on each time slice, so that we can now more or less follow the idea from the proof in \cite{SSSZ12}.  

\begin{theorem}[\bf Local continuity with supercritical drift] \label{t:modulus}
Let  $u$  be a smooth time-independent divergence-free vector field on $B_1(0)\subseteq \RR^2$ and assume that the solution $\theta$ of \eqref{e:autonomous} on $(t_0,t_1)\times B_1(0)$ satisfies $ \|\partial_t \theta(t,\cdot)\|_{L^\infty(B_1(0))}\le D$, $\|\nabla \theta(t,\cdot)\|_{L^2(B_1(0))}\le E$, and $\|\theta(t,\cdot)\|_{L^\infty(B_1(0))} \le F$, uniformly in $t\in (t_0,t_1)$.  Then $\theta(t,\cdot)$ restricted to $B_{1/2}(0)$  satisfies a modulus of continuity given by
\begin{align}
\rho_t(r) = \frac {C (D+ E + F \sqrt{\log_-(t-t_0)}\,)} {\sqrt{ - \log r}}
\label{6.3}
\end{align}
for all $r \in (0,1/2)$ and  $t \in (t_0,t_1)$, and some universal $C>0$ (with $\log_- s=\max\{-\log s, 0  \}$).   
\end{theorem}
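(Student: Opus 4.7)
My plan is to adapt the elliptic Lebesgue-type argument of \cite{SSSZ12}, using Lemma~\ref{l:elliptic-max} as a surrogate elliptic maximum principle on each time slice. Fix $t \in (t_0, t_1)$ and two points $x, y \in B_{1/2}(0)$ with midpoint $z_0$ and $r := |x-y|$.

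First, I would use the $H^1$ bound to locate a circle around $z_0$ on which the oscillation is small. Writing $\theta(t,\cdot)$ in polar coordinates around $z_0$, Cauchy--Schwarz gives $\osc_{\partial B_s(z_0)}^2 \theta(t,\cdot) \le 2\pi \int_0^{2\pi} |\partial_\phi \theta|^2 \dd\phi$, and since $|\partial_\phi\theta|=s|\nabla\theta\cdot\hat\phi|$, Fubini yields
\[
\int_r^R \frac{\osc_{\partial B_s(z_0)}^2 \theta(t,\cdot)}{s}\,\dd s \;\le\; 2\pi \|\nabla\theta(t,\cdot)\|_{L^2(B_R(z_0))}^2 \;\le\; 2\pi E^2,
\]
so some $s \in [r,R]$ satisfies $\osc_{\partial B_s(z_0)}\theta(t,\cdot) \le CE/\sqrt{\log(R/r)}$. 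Setting $T := \min\{t-t_0,\,h/4\}$, we have $\sqrt{T/h} = \min\{\sqrt{(t-t_0)/h},\,1/2\} =: R_*$ and $B_{R_*}(z_0) \subset B_1(0)$. For any $R < R_*$ I then apply Lemma~\ref{l:elliptic-max} (translated so that $z_0$ is the origin and $t$ is time $0$) on the cylinder $[t-T,t] \times B_{R_*}(z_0)$, using $\|\partial_t\theta\|_{L^\infty} \le D$, to convert the boundary bound into an interior one:
\[
|\theta(t,x)-\theta(t,y)| \;\le\; \osc_{B_s(z_0)}\theta(t,\cdot) \;\le\; \frac{CE}{\sqrt{\log(R/r)}} + 4DhR^2.
\]

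The final step is to choose $R$ so as to balance these two errors. For $r$ small enough that $1/\sqrt{-\log r} < R_*$, I take $R := 1/\sqrt{-\log r}$; then $\log(R/r) = -\log r - \tfrac{1}{2}\log(-\log r) \ge \tfrac{1}{2}(-\log r)$ and $R^2 = 1/(-\log r)$, giving $|\theta(t,x)-\theta(t,y)| \le C(D+E)/\sqrt{-\log r}$. For all remaining $r \in (0,1/2)$ (either $r$ close to $1/2$, or short times making $R_*$ itself small), I fall back on the trivial bound $|\theta(t,x)-\theta(t,y)| \le 2F$ and observe that then $-\log r \le \max\{\log 4,\, \tfrac{1}{2}\log(4h/(t-t_0))\} \le C(1+\log_-(t-t_0))$, which makes $2F$ controlled by $CF\sqrt{\log_-(t-t_0)}/\sqrt{-\log r}$ up to a universal constant. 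Combining the two regimes produces \eqref{6.3}.

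The main obstacle is the error term $4DhR^2$ in Lemma~\ref{l:elliptic-max}, which has no analog in the pure elliptic argument of \cite{SSSZ12}: it prevents us from taking $R$ at a fixed positive scale and forces the window $[r,R]$ to shrink with $r$, which is what produces the $D$ contribution to the modulus and only a logarithmic (rather than H\"older) rate. The second subtlety is the short-time regime $t-t_0 \ll 1$: Lemma~\ref{l:elliptic-max} only governs discs of radius below $\sqrt{(t-t_0)/h}$, so once $r$ exceeds this parabolic scale the oscillation argument is unavailable and only the trivial $L^\infty$ bound is at our disposal; the term $F\sqrt{\log_-(t-t_0)}$ in \eqref{6.3} is precisely what is needed to absorb this fallback into the logarithmic form.
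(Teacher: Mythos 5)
Your overall scheme matches the paper's: use the $H^1$ bound to locate circles of small oscillation, upgrade to an interior bound via Lemma~\ref{l:elliptic-max}, and balance against the trivial $L^\infty$ bound on the regime that the lemma cannot reach. But the specific choice $R = (-\log r)^{-1/2}$ breaks the fallback step.

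With that choice, the constraint $R < R_* = \sqrt{T/h}$ for Lemma~\ref{l:elliptic-max} reads $-\log r > h/T$, so your fallback regime is $\{-\log r \le h/T\}$. When $t - t_0 < h/4$ we have $T = t - t_0$, so this regime extends all the way down to $r \approx e^{-h/(t-t_0)}$, i.e. $-\log r$ can be as large as $h/(t-t_0)$, which grows like a \emph{reciprocal} of $t-t_0$. Your claimed bound $-\log r \le \max\{\log 4, \tfrac12 \log(4h/(t-t_0))\} \le C(1 + \log_-(t-t_0))$ is therefore false (it is only logarithmic in $1/(t-t_0)$). Consequently $2F$ is not dominated by $C F \sqrt{\log_-(t-t_0)}/\sqrt{-\log r}$ on the whole fallback regime: for $r$ near $e^{-h/(t-t_0)}$ one has $\sqrt{-\log r} \sim (t-t_0)^{-1/2}$, which is much larger than $\sqrt{\log_-(t-t_0)}$, so the required inequality fails (take $D = E = 0$ to see there is no rescue from the other terms).

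The repair is to take $R = \sqrt{r}$, which is the paper's choice. Then $R < R_*$ becomes $r < T/h$, the fallback regime is just $r \ge T/h$, where $-\log r \le \log(h/T) \le C(1 + \log_-(t-t_0))$, and the $F$ contribution comes out as claimed. Note also that $\log(R/r) = -\tfrac12 \log r$ still produces the factor $\sqrt{-\log r}$ for the $E$ term, and the error term $4DhR^2 = 4Dhr$ is absorbed via $r \le (-\log r)^{-1/2}$ for $r \in (0,1/2)$, recovering the $D$ contribution. (Equivalently, you can keep your $R$ but must add an intermediate regime $r < T/h$ with $(-\log r)^{-1/2} \ge R_*$, in which you instead use $R = \sqrt{r}$; your two-regime split as written does not close.)
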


\begin{proof}[Proof of Theorem~\ref{t:modulus}]
Fix $x\in B_{1/2}(0)$ and $t>t_0$, and let $T=\min \{ t-t_0, h/4\}$, with $h$ from Lemma \ref{t:localDG}.  Let also 
$c=2hD$. 
We now apply Lemma \ref{l:elliptic-max} to $\theta$ shifted by $(t,x)$ and get for any $s<\sqrt{T/h}\leq 1/2$,
\begin{align*}
\sup_{\partial B_s(x)} \theta(t,\cdot) \geq \sup_{ B_s(x)} \theta(t,\cdot) - c s^2, \\
\inf_{\partial B_s(x)} \theta(t,\cdot) \leq \inf_{ B_s(x)} \theta(t,\cdot) + c s^2.
\end{align*}
In particular
\begin{equation}~\label{mon}
\osc_{\partial B_s(x)} \theta(t,\cdot) \geq \osc_{B_s(x)} \theta(t,\cdot) - 2c s^2.
\end{equation}

We first prove that
\begin{equation} \label{6.2}
 (\osc_{B_r} \theta=) \quad\osc_{B_r(x)} \theta(t,\cdot) < \max\left\{4c r , \frac{4\pi E}{\sqrt{ - \log r}} \right\} 
\end{equation}
holds for any $r< T/h \,(\le 1/4)$, after which we proceed to all $r<1/2$. 
To prove \eqref{6.2} we just need to consider the case when $\osc_{ B_r} \theta \ge 4cr $.  Let $R=\sqrt{r}<\sqrt{T/h} $
and estimate
\begin{align*}
E^2 \geq   \int\limits_{B_R \setminus B_r} |\grad \theta|^2 \dd x &= \int\limits_r^R \int\limits_{\partial B_s} |\grad \theta|^2 \dd \sigma \dd s \geq \int\limits_r^R \int\limits_{\partial B_s} |\theta_\sigma|^2 \dd \sigma \dd s,
\end{align*}
since $|\grad \theta|^2 = \theta_\sigma^2 + \theta_\nu^2$  where $\theta_\sigma$ is the tangential derivative and $\theta_\nu$ is the normal one. We rewrite the integral on the right using polar coordinates $(s \hat \sigma = \sigma)$, and use  Cauchy-Schwartz
 to obtain
\begin{align*}
E^2 &\geq \int\limits_r^R \frac 1 s \int\limits_{\partial B_1} |\theta_{\hat \sigma}(s \hat \sigma)|^2 \dd \hat \sigma \dd s
\geq \int\limits_r^R \frac 1{\pi^2s}  (\osc_{\partial B_s} \theta)^2 \dd s.
\end{align*}
Applying estimate \eqref{mon},  and noticing that $2cs^2\le 2c R^{2} =  2c r \leq \tfrac 12\osc_{ B_r} \theta $, we get
\begin{align*}
{\pi^2}{E^2} \geq \int\limits_r^R \frac 1 s (\osc_{ B_r} \theta - 2cs^2)^2 \dd s \ge (\log R-\log r) \left( \frac 12\osc_{ B_r} \theta \right)^2
\geq \frac{-\log r}{8} (\osc_{ B_r} \theta)^2.
\end{align*}
 Thus \eqref{6.2} holds for $r \in (0,T/h)$.  On the other hand, we have that
\begin{align} 
\osc_{B_r} \theta \leq 2 \|\theta\|_{L^\infty(B_r)} \leq 2 F \label{e:triangle:ineq}
\end{align}
holds for $r\in[T/h,  1/2)$.  Thus, in order to prove \eqref{6.3}, we only need to combine \eqref{6.2} and \eqref{e:triangle:ineq} with the fact  that $r\le (-\log r)^{-1/2}$ for $r< 1/2$.
\end{proof}

The proof of Theorem~\ref{t:intro4} now follows by fixing any $x\in\RR^2$ (without loss take $x=0$) and $t>0$, then letting $t_0= t/2$ (with $h$ from Lemma \ref{t:localDG}) and $t_1=\infty$.  Lemmas~\ref{l:linfty-decay} and~\ref{l:energy-ineq} imply that 
\begin{align*}
&D \leq { C (1 +   t^{-1} \| u\|_{L^1_{\rm loc}}) \|\theta_0\|_{C^2\cap W^{4,1}}  }\\
&E \leq { C (1 + \|u\|_{L^{1}_{\rm loc}} + t^{-1} \| u\|_{L^1_{\rm loc}} ) \|\theta_0\|_{C^2\cap W^{4,1}} }\\
&F \leq \| \theta_0\|_{L^\infty}
\end{align*}
which combined with \eqref{6.3} implies the estimate \eqref{e:MOC} of Theorem~\ref{t:intro4}. Lastly, by Lemma \ref{l:linfty-decay},
$\theta$ is uniformly Lipschitz in time on $[t_0,\infty) \times \RR^2$, for any $t_0>0$, and hence continuous on $\RR^+\times\RR^2$.

\section{Loss of regularity in the slightly supercritical case} \label{sec:barely}
Motivated by \cite{DKSV12}, in this section we address the regularity of solutions to the drift-diffusion equation
\begin{align} 
\partial_{t} \theta + u \cdot \nabla \theta + \EL \theta = 0 \label{eq:PDE}
\end{align}
with smooth initial condition $\theta(0,x) = \theta_{0}(x)$ and  a bounded divergence-free drift  $u$. Here $\EL$ is a nonlocal dissipative operator which is slightly less smoothing than $(-\Delta)^{1/2}$.  More precisely, let $m \colon \RR^{2}\setminus\{0\} \to (0,\infty)$ be a smooth radially symmetric, radially decreasing function, that is singular at the origin, decays at infinity, and satisfies the below properties:
\begin{align} 
& \int_0^\infty \frac{m(r)}{1+r} \dd r  < + \infty \label{eq:m:cond:1}\\
& r m(r) \mbox{ is non-decreasing for } r \in (0,1) \label{eq:m:cond:3}.
\end{align}We abuse notation and write $m(y) = m(|y|)$ for $y \in \RR^2 \setminus \{0\}$. Condition \eqref{eq:m:cond:3} can be relaxed, to $r^\beta m(r)$ is non-decreasing on $(0,1)$ for some $\beta < 2$. Associated to this function $m$ we define the nonlocal operator 
\begin{align}\label{eq:EL}
\EL \theta(x) = P.V. \int_{\RR^{2}} \left( \theta(x) - \theta(x+y) \right) \frac{m(y)}{|y|^{2}} \dd y 
\end{align}
for all smooth functions $\theta$. Condition \eqref{eq:m:cond:1} is the only essential one, and shows that we may take $m(r) \approx r^{-s}$ for any $s\in (0,1/2)$, but also $m(r) \approx r^{-1} ( \log(2 + 1/r))^{-\beta}$ for any $\beta > 1$. We informally say that $\EL$ is less smoothing than $(-\Delta)^{1/2}$ by {at least a logarithm}, or that $\EL$ is slightly supercritical with respect to the scaling induced by $L^\infty$ drift (for which $(-\Delta)^{1/2}$ is critical).

\begin{proof}[Proof of Theorem~\ref{t:intro5}]  The proof  consists of applying Theorem~\ref{thm:abstract} to the drift-diffusion equation \eqref{eq:PDE}. Since we wish to consider drift in $L^\infty(\RR^2)$ it is natural to take the same drift as in Section~\ref{sec:Linfty}, but which we smoothen as done in Section~\ref{sec:Holder}. The main difficulty arrises in proving condition \eqref{1.10} in Theorem~\ref{thm:abstract}. The issue is that by considering $\EL$, we have lost the homogeneity of the associated kernel. Most of the analysis is devoted to finding a constant $H(a)$ such that \eqref{1.10} holds for all $a>0$.

Since we work with bounded drift, we set $u_\rho(x) = u_{0,\rho}(x)$, which is defined by letting $\alpha=0$ (and $r_\alpha = 1/4$) in \eqref{eq:ualpha}. 
In particular, $u$ is divergence-free, has $L^\infty$ norm independent of $\rho$ (which we normalize to be less than $1$), is globally smooth and vanishes in a ball $B_{\eps_{\rho,m}}(0)$, where $\eps_{\rho,m}$ will be chosen later on depending on $\rho$ and $m$.
In addition, for $x  \in \CC_\rho = (B_{100}(0) \setminus B_{\eps_{\rho,m}}(0) ) \cap (\CC_0 \cup \CC_0')$  
we have the explicit formula
\begin{align}
u_{\rho,m}(x_1,x_2) =  \sign(x_2) (0,-1)  \label{eq:u}
\end{align}
where $\CC_0$ is the cone centered at the origin which is tangent to $B_{1/2}(0,1)$, and $\CC_0'$ is the reflexion of $\CC_0$ about the origin. In particular 
\begin{align} \label{1.2:barely}
u(0,x_{2})=(0,-1)
\end{align}
for all $x_{2} \in (\eps_{\rho,m},100)$. Thus $u_{\rho,m}$ satisfies the conditions of Theorem~\ref{thm:abstract}.

As in Section~\ref{sec:Holder}, since the above drift obeys the symmetries of the problem and is smooth, we have that \eqref{eq:PDE} has a comparison principle on the upper half-plane, and solutions are odd in $x_2$. 

Let $\eta\in C^\infty(\RR)$ be a radially non-increasing, cutoff function supported on $|x|\leq 1$, with  $\eta(x)>0$ if $|x|<1$, and $\eta(x) =1$ if $|x|\leq 1/2$. We set
\begin{align} 
\phi(x_1,x_2) = \eta(4 |(x_1,x_2-1)|) - \eta (4 |(x_1,x_2+1)|). \label{eq:phi:barely}
\end{align}
Without loss of generality, we may assume that $\theta_0 \geq \phi(x)$ in the upper half-plane $\{x_2>0\}$. As outlined in Theorem~\ref{thm:abstract}, we define $z(t)$ as the solution of
\[
\dot z(t) = u_2(0,z(t))= (0,-1), \qquad z(0)=1,
\]
which is given explicitly as 
\begin{align} 
z(t) = 1- t \label{eq:z}
\end{align}
for all $t\in [0,t_{\rho,m}]$. Here $t_{\rho,m} = 1 - \eps_{\rho,m}$ is the time it takes $z(t)$ to reach the value $\eps_{\rho,m}$ (we need to work in this time interval in order to use the explicit formula \eqref{1.2:barely}). Since the vector field considered here is the same as for $\alpha =0$ in Section~\ref{sec:Holder}, condition \eqref{1.13} of Theorem~\ref{thm:abstract} automatically holds by \eqref{1.8} in the proof of Lemma~\ref{lemma:sub:transport}.

For any $a>0$, we define $\phi_a(x) = \phi(x/a)$, where $\phi$ is defined in \eqref{eq:phi:barely}. We now need to quantify the effect of $\EL$ on $\phi_a$. Verifying that condition \eqref{1.10} of Theorem~\ref{thm:abstract} holds for some constant $H(a)$ is more delicate than in Section~\ref{sec:Holder}, so we state this in a lemma, which we shall prove at the end of this section.
\begin{lemma}[\bf Effect of dissipation]\label{lemma:diss}
For any $a\in [0,1]$ we have that
\begin{align} 
\EL \phi_a(x) \leq H(a) \phi_a(x) \label{eq:eigen}
\end{align}
holds in the upper half plane $\{x_2>0\}$ where 
\begin{align} 
H(a)= c_0 \left( \frac{1}{a^2} \int_0^{a} r m(r) \dd r +  \int_{a}^1 \frac{m(r)}{r} \dd r +1 \right) \label{eq:gamma}
\end{align}
for some positive constant $c_0$ which may depend on $\eta$ and $m$, but not on $a$.
\end{lemma}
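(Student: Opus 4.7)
The plan is to adapt the proof of Lemma~\ref{lemma:dissipation} to the inhomogeneous kernel $K(z) := m(|z|)/|z|^2$. Writing $\tilde H(a) := H(a)/c_0$, the target becomes $\EL \phi_a(x) \leq c_0 \tilde H(a) \phi_a(x)$ on $\{x_2 > 0\}$. I will proceed in three stages.

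\emph{Stage 1 (vanishing region).} For $x \in \{x_2 > 0\}$ with $\phi_a(x) = 0$ (equivalently $|x-(0,a)| \geq a/4$), I will use oddness of $\phi_a$ in $x_2$ to write
\[
\EL \phi_a(x) = -\int_{\{y_2 > 0\}} \phi_a(y)\,\bigl[K(x-y) - K(x-Ry)\bigr] \dd y,
\]
exactly as in the first part of Lemma~\ref{lemma:dissipation}. Since $|x-y| \leq |x-Ry|$ for $x,y$ both in the upper half-plane and $K$ is radially decreasing (because $m$ is non-increasing and $r \mapsto r^{-2}$ is decreasing), the integrand is non-negative, so $\EL \phi_a(x) \leq 0 = H(a) \phi_a(x)$.

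\emph{Stage 2 (pointwise upper bound).} I will split $\EL \phi_a(x) = I_{\mathrm{loc}} + I_{\mathrm{nl}}$ with $I_{\mathrm{loc}}$ the principal-value integral over $\{|z| \leq a\}$ and $I_{\mathrm{nl}}$ the (absolutely convergent) integral over $\{|z| > a\}$. Taylor expansion kills the linear term by radial symmetry of $K$, leaving $|I_{\mathrm{loc}}| \leq C \|D^2 \phi_a\|_{L^\infty} \int_0^a r m(r)\, \dd r \leq (C'/a^2)\int_0^a r m(r)\, \dd r$ since $\|D^2 \phi_a\|_{L^\infty} \leq C a^{-2}$. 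For $I_{\mathrm{nl}}$, the crude bound $|\phi_a(x) - \phi_a(x-z)| \leq 2\|\phi\|_{L^\infty}$ combined with \eqref{eq:m:cond:1} gives $|I_{\mathrm{nl}}| \leq C\int_a^\infty m(r)/r\, \dd r \leq C(\int_a^1 m(r)/r\, \dd r + 1)$. Hence $\EL \phi_a(x) \leq C''\tilde H(a)$ uniformly in $x$.

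\emph{Stage 3 (combining).} For any fixed $\beta \in (0,1)$ and $x$ with $|x-(0,a)| \leq (1-\beta) a/4$, monotonicity of $\eta$ gives $\phi_a(x) \geq \eta(1-\beta) > 0$, so the Stage~2 bound yields $\EL \phi_a(x) \leq (C''/\eta(1-\beta))\tilde H(a)\phi_a(x)$. On the annular region $\{(1-\beta)a/4 \leq |x-(0,a)| \leq a/4\}$, I will refine Stage~1 quantitatively: compactness of $\partial B_{a/4}(0,a)$ together with strict positivity of $K(x-y) - K(x-Ry)$ will give $\EL \phi_a \leq -c_m m(a)$ on this boundary, with $c_m>0$ depending only on $\eta, m$; here \eqref{eq:m:cond:3} enters via $m(13a/8) \geq (8/13) m(a)$ for $a < 8/13$, handling the small-$a$ regime. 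Propagating inward by a mean-value estimate using the gradient bound $\|\nabla \EL \phi_a\|_{L^\infty} \leq (C/a)\tilde H(a)$ (established by applying the Stage~2 scheme to $\nabla \phi_a$), together with the two-sided comparison $a^{-2}\int_0^a r m(r)\, \dd r \asymp m(a)$ provided by \eqref{eq:m:cond:3}, then yields $\EL \phi_a(x) \leq H(a)\phi_a(x)$ throughout the annulus, for a single $c_0$ depending only on $\eta$ and $m$.

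The main obstacle is Stage~3: ensuring that the interpolation between the quantitative negativity near $\partial B_{a/4}(0,a)$ and the universal upper bound of Stage~2 produces a constant $c_0$ independent of $a \in (0,1]$. The decisive ingredient is condition \eqref{eq:m:cond:3}, which keeps $m(a)$ comparable to $a^{-2}\int_0^a r m(r)\, \dd r$ and thus tethers the size of the boundary negativity $c_m m(a)$ to the local part of $\tilde H(a)$.
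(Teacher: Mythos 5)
Your three-stage decomposition is exactly the paper's proof of Lemma~\ref{lemma:diss}: Stage~1 is the paper's oddness/monotonicity computation showing $\EL\phi_a(x)\leq 0$ whenever $\phi_a(x)=0$ in $\{x_2>0\}$; Stage~2 is the paper's double-difference estimate for $\|\EL\phi_a\|_{L^\infty}$, split at $|y|=a$; and Stage~3 is the paper's scheme of a quantitative boundary negativity $\EL\phi_a\leq -m(3a)/8$ on $\partial B_{a/4}(0,a)$, propagated inward by a mean-value argument with a gradient bound on $\EL\phi_a$, leaving an inner ball on which $\phi_a$ is bounded below. So the route is identical.

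There is, however, a real gap in Stage~3, at precisely the point you flag as the obstacle. Applying the Stage~2 scheme to $\nabla\phi_a$ honestly gives $\|\nabla\EL\phi_a\|_{L^\infty}\lesssim a^{-1}\tilde H(a)$ (the relevant Lipschitz constant is $\|\nabla^2\phi_a\|_{L^\infty}\sim a^{-2}$, so the bound cannot be $a^{-1}$ times an $a$-independent constant). Hence the mean-value step produces an annulus of width $\rho_a\sim\delta_a/\|\nabla\EL\phi_a\|_{L^\infty}\gtrsim a\,m(a)/\tilde H(a)$, and you need $\rho_a\gtrsim a$, i.e.\ $m(a)\gtrsim\tilde H(a)$, uniformly in $a\in(0,1]$. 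The comparison $a^{-2}\int_0^a rm(r)\,\dd r\asymp m(a)$ that you extract from \eqref{eq:m:cond:3} controls only the local piece of $\tilde H(a)$ and says nothing about the nonlocal piece $\int_a^1 m(r)r^{-1}\,\dd r$, which \eqref{eq:m:cond:1} and \eqref{eq:m:cond:3} alone do not tether to $m(a)$. Concretely, $m(r)=1+\log(1/r)$ for $r\in(0,1]$ (extended to decay for $r>1$) is singular, decreasing, $rm(r)$ is non-decreasing on $(0,1)$, and $\int_0^1 m\,\dd r<\infty$, yet $\int_a^1 m(r)r^{-1}\,\dd r\sim\tfrac12(\log(1/a))^2\gg m(a)\sim\log(1/a)$; for this $m$ your annulus width collapses as $a\to 0$, so the lower bound $\eta(1-4\rho_a/a)$ tends to $0$ and $c_0$ cannot be chosen independent of $a$. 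For the record, the paper at this point asserts $\|\nabla\EL\phi_a\|_{L^\infty}\leq c_{\eta,m}/a$, which appears to drop a power of $a$ (it is stated as a consequence of $\|\nabla\phi_a\|_{C^1}\lesssim 1/a$, whereas scaling gives $\|\nabla\phi_a\|_{C^1}\sim a^{-2}$); if one uses the honest gradient bound the paper's own argument runs into the same difficulty, so your proposal is not worse than the paper, but as written it does not close without an additional hypothesis along the lines of $\int_a^1 m(r)r^{-1}\,\dd r\lesssim m(a)$ (which does hold for the motivating examples $m(r)\approx r^{-s}$ and $m(r)\approx r^{-1}(\log(2+1/r))^{-\beta}$).
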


We conclude the proof of Theorem~\ref{t:intro5}, and then return to prove Lemma~\ref{lemma:diss}. It is left to verify condition \eqref{1.12} in Theorem~\ref{thm:abstract}. For this purpose we first prove that
\begin{align} 
\int_{0}^{t_{\rho,m}} H(z(t)) \dd t \leq \bar C < \infty, \label{eq:barely:tocheck}
\end{align}
for some constant $\bar C$ that is independent of $\eps_{\rho,m}$ (and hence independent of $\rho$). 
Note that once \eqref{eq:barely:tocheck} is proven,  we have that at time $T = t_{\rho,m} = 1 - \eps_{\rho,m}$
\[
\rho(2z(T)) = \rho(2 \eps_{\rho,m}) \leq \exp(-\bar C) \leq \exp\left( -\int_0^T H(z(t)) \dd t\right)
\]
by choosing $\eps_{\rho,m}$ sufficiently small, thereby proving condition \eqref{1.12} in Theorem~\ref{thm:abstract}. 

We now prove \eqref{eq:barely:tocheck}. By \eqref{eq:z} and \eqref{eq:gamma}, since $c_0$ does not depend on $\eps_{\rho,m}$, and since $t_{\rho,m} \leq 1$, it is sufficient to estimate
\begin{align*} 
& \int_0^{t_{\rho,m}} \left(\frac{1}{z(t)^2} \int_0^{z(t)} r m(r) \dd r +  \int_{z(t)}^1 \frac{m(r)}{r} \dd r \right) \dd t  \\
&\qquad \leq \int_0^1 \left(\frac{1}{(1-t)^2} \int_0^{1-t} r m(r) \dd r +  \int_{1-t}^1 \frac{m(r)}{r} \dd r \right) \dd t.
\end{align*}
First, using Fubini we have
\[ 
\int_0^{1} \int_{1-t}^1 \frac{m(r)}{r} \dd r \dd t = \int_0^1 \frac{m(r)}{r} \int_{1-r}^1 1 \dd t \dd r = \int_0^1 m(r) \dd r < \infty.
\]
Second, cf.~\ref{eq:m:cond:3} we use that  $sm(s)$ is non-decreasing on $(0,1)$, and obtain
\[
\int_0^1 \frac{1}{(1-t)^2} \int_0^{1-t} r m(r) \dd r \dd t \leq \int_0^1 \frac{1}{(1-t)^2} \int_0^{1-t} (1-t) m(1-t) \dd r \dd t = \int_0^1 m(1-t) \dd t < \infty 
\]
which concludes the proof of \eqref{eq:barely:tocheck}.  

This concludes the proof of Theorem~\ref{t:intro5} modulo Lemma~\ref{lemma:diss}, which we prove next.
\end{proof}

\begin{proof}[Proof of Lemma~\ref{lemma:diss}]

The proof is similar to that of Lemma~\ref{lemma:dissipation}, but several difficulties arise because we lost the homogeneity of the kernel of $\EL$. 

Recall that $\mathop{supp} \phi_a \cap \{ x_2>0\} = B_{a/4}(0,a) =: \Omega_a$. 
We prove prove that on $\partial\Omega_a$, we have that $\EL \phi_a \leq - \delta_a < 0$ for some $\delta_a>0$. The reason this holds is that point on $\partial \Omega_a$ are local minima of $\phi_a$, and the positive contribution from the lower half plane is dominated by the negative contribution from the upper half plane. Let $x \in \partial\Omega_a$. By the definition of $\phi$ in \eqref{eq:phi:barely} we have that $\phi_a(x) = 0$ and therefore
\begin{align} 
\EL \phi_a(x) &= - \int_{\RR^{2}} \phi_a(y) \frac{m(|x-y|)}{|x-y|^{2}} \dd y \notag\\
&= - \int_{\Omega_a} \eta\left( \frac{4 |(y_1,y_2-a)|}{a}\right) \frac{m(|x-y|)}{|x-y|^{2}} \dd y + \int_{\Omega_a'} \eta\left( \frac{4 |(y_1,y_2+a)|}{a}\right) \frac{m(|x-y|)}{|x-y|^{2}} \dd y\notag\\
& = - \int_{\Omega_a} \eta\left( \frac{4 |(y_1,y_2-a)|}{a}\right) \left( \frac{m(|x-y|)}{|x-y|^{2}} - \frac{m(|x-Ry|)}{|x-Ry|^{2}} \right) \dd y
\label{eq:L:1}
\end{align}
where for $y = (y_1,y_2)$ we have denoted $Ry=(y_1,-y_2)$. For $x \in \partial \Omega_a$ and $y \in \Omega_a$, we have that $|x-y| \leq |x-Ry|$, and hence due to the monotonicity of $m$, the integrand in \eqref{eq:L:1} is positive. Coupled with the fact that $\eta \geq 0$, this already shows $\EL \phi_a(x) \leq 0$. Note that for $x \in \partial B_{a/4}(0,a)$ and $y \in B_{a/4}(0,a)$, by the triangle inequality we have $|x-Ry| \geq 2 |x-y|$. This can be seen by drawing a picture. Therefore, since $m$ is decreasing we obtain from \eqref{eq:L:1} that
\begin{align} 
\EL \phi_a(x) &\leq -  \int_{B_{a/8}(0,a)} \eta\left( \frac{4 |(y_1,y_2-a)|}{a}\right) \left( \frac{m(x-Ry)}{|x-y|^2} - \frac{m(x-Ry)}{4 |x-y|^2} \right) \dd y\notag\\
&\leq -  \frac 34 \int_{B_{a/8}(0,a)}  \frac{m(3 a)}{|x-y|^2}  \dd y \leq - \frac{m(3a)}{8} =: -\delta_a \label{eq:L:2}
\end{align}
and \eqref{eq:eigen} holds for $x \in \partial \Omega_a$.

Similarly, since $|x-y|\leq |x-Ry|$, whenever both $x$ and $y$ are in the upper half-space, on the set 
$\{x_2 >0 \} \cap \Omega_a^C$ 
we have $\EL \phi_a \leq 0$, so that \eqref{eq:eigen} trivially holds.

By smoothness of $\eta$, there exists $\rho_a  \in (0, a/4) $ such that on the annulus 
$B_{a/4}(0,a) \setminus B_{a/4 - \rho_a}(0,a)$
we have $ \EL \phi_a \leq 0 $. We have to estimate $\rho_a$ from below, as this will be needed later on. Let $x \in B_{a/4}(0,a) \setminus B_{a/4 - \rho_a}(0,a)$ for some $\rho_a>0$. From the mean value theorem, setting
\begin{align} 
\rho_a = \min \left\{ \frac{\delta_a}{\| \nabla \EL \phi_a \|_{L^\infty}}, \frac{a}{8} \right\} \label{eq:rho:1}
\end{align}
ensures that $\EL\phi_a(x) \leq - \delta_a/2$. In addition, we have
\begin{align} 
|\nabla \EL \phi_a(x)| &\leq \int_{\RR^2} \left| \nabla \phi_a(x) - \nabla\phi_a(x+y) \right| \frac{m(y)}{|y|^2} \dd y \notag \\
&\leq c \| \nabla \phi_a \|_{C^1} \int_{|y|\leq 1}  \frac{m(y)}{|y|} \dd y + c \|\nabla \phi_a\|_{L^\infty} \int_{|y|\geq 1}  \frac{m(y)}{|y|^2} \dd y\notag\\
&\leq \frac{c \| \eta \|_{C^2}}{a}  \int_0^1 m(r) \dd r + \frac{c \| \eta \|_{C^1}}{a}  \int_1^\infty \frac{m(r)}{r} \dd r =: \frac{c_{\eta,m}}{a} \label{eq:grad:L}
\end{align}
in view of \eqref{eq:m:cond:1}. It follows from \eqref{eq:rho:1} and \eqref{eq:grad:L} that
\begin{align} 
\frac{1}{2} \geq \frac{4 \rho_a}{a} \geq \frac{1}{2} \min \left\{ c_{\eta,m}^{-1} m(3a), 1\right\} \geq  \frac{1}{2} \min \left\{ c_{\eta,m}^{-1} m(3), 1\right\} =: c_3 \label{eq:rho:2}
\end{align}
since $m$ is monotone decreasing and $a\leq 1$. Here $c_3 \leq 1/2$ is independent of $a$.

Lastly, if $x \in B_{a/4- \rho_a}(0,a)$, then since $\eta$ is radially non-increasing, by \eqref{eq:rho:2} we have
\begin{align} 
\phi_a(x) = \eta\left( \frac{4|(x_1,x_2-a)|}{a}\right) \geq \eta \left(1 - \frac{4 \rho_a}{a}\right) \geq  c_\eta > 0
\end{align}
where $c_\eta$ is independent of $a$.
Therefore,  to ensure that $\EL \phi_a(x) \leq H(a) \phi_a(x)$, we just need to verify
\begin{align} 
H(a) \geq \frac{\| \EL \phi_a\|_{L^\infty} }{c_\eta}. \label{eq:gamma:lower}
\end{align}
Similarly to \eqref{eq:grad:L} we may bound (but this time we split the integral domains at $|y|=a$ not at $|y|=1$, and we exploit the P.V. in the definition of $\EL$ to write the nonlocal operator in terms of double-differences)
\begin{align} 
|\EL \phi_a(x)| &\leq \frac{1}{2} \int_{\RR^2} \left| 2 \phi_a(x) - \phi_a(x+y) - \phi_a(x-y)\right| \frac{m(y)}{|y|^2} \dd y \notag \\
&\leq \frac{c \| \eta\|_{C^2}}{a^2} \int_0^{a} r m(r) \dd r + c \int_{a}^1 \frac{m(r)}{r} \dd r + c \int_1^\infty \frac{m(r)}{r}\dd r \notag\\
&\leq c_0 \left( \frac{1}{a^2} \int_0^{a} r m(r) \dd r +  \int_{a}^1 \frac{m(r)}{r} \dd r +1 \right) \label{eq:sup:L}
\end{align}
for some $c_{0}>0$ independent of $a$. Combining \eqref{eq:gamma:lower}--\eqref{eq:sup:L} completes the proof of the lemma.
\end{proof}

\bibliographystyle{siam}

\begin{thebibliography}{10}

\bibitem{AlibaudDroniouJulien07}
{\sc N.~Alibaud, J.~Droniou, and J.~Vovelle}, {\em Occurrence and
  non-appearance of shocks in fractal {B}urgers equations}, J. Hyperbolic
  Differ. Equ., 4 (2007), pp.~479--499.

\bibitem{Ambrosio05}
{\sc L.~Ambrosio}, {\em Transport equation and {C}auchy problem for non-smooth
  vector fields}, in Calculus of variations and nonlinear partial differential
  equations, vol.~1927 of Lecture Notes in Math., Springer, Berlin, 2008,
  pp.~1--41.

\bibitem{BerestyckiHamelNadirashvili05}
{\sc H.~Berestycki, F.~Hamel, and N.~Nadirashvili}, {\em Elliptic eigenvalue
  problems with large drift and applications to nonlinear propagation
  phenomena}, Comm. Math. Phys., 253 (2005), pp.~451--480.

\bibitem{BochutDesvillettes01}
{\sc F.~Bouchut and L.~Desvillettes}, {\em On two-dimensional {H}amiltonian
  transport equations with continuous coefficients}, Differential Integral
  Equations, 14 (2001), pp.~1015--1024.

\bibitem{CaffarelliSilvestre07}
{\sc L.~Caffarelli and L.~Silvestre}, {\em An extension problem related to the
  fractional {L}aplacian}, Comm. Partial Differential Equations, 32 (2007),
  pp.~1245--1260.

\bibitem{CaffarelliVasseur10}
{\sc L.~Caffarelli and A.~Vasseur}, {\em Drift diffusion equations with
  fractional diffusion and the quasi-geostrophic equation}, Ann. of Math. (2),
  171 (2010), pp.~1903--1930.

\bibitem{CCGR09}
{\sc {\'A}.~Castro, D.~C{{\'o}}rdoba, F.~Gancedo, and R.~Orive}, {\em
  Incompressible flow in porous media with fractional diffusion}, Nonlinearity,
  22 (2009), pp.~1791--1815.

\bibitem{ChaeConstantinCordobaGancedoWu12}
{\sc D.~Chae, P.~Constantin, D.~C{{\'o}}rdoba, F.~Gancedo, and J.~Wu}, {\em
  Generalized surface quasi-geostrophic equations with singular velocities},
  Communications on Pure and Applied Mathematics,  (2012).

\bibitem{Chamorro10}
{\sc D.~Chamorro}, {\em Remarks on a fractional diffusion transport equation
  with applications to the dissipative quasi-geostrophic equation},
  arXiv:1007.3919v5 [math.AP],  (2010).

\bibitem{ConstantinIyerWu2008}
{\sc P.~Constantin, G.~Iyer, and J.~Wu}, {\em Global regularity for a modified
  critical dissipative quasi-geostrophic equation}, Indiana Univ. Math. J., 57
  (2008), pp.~2681--2692.

\bibitem{ConstantinKiselevRyzhikZlatos08}
{\sc P.~Constantin, A.~Kiselev, L.~Ryzhik, and A.~Zlato{\v{s}}}, {\em Diffusion
  and mixing in fluid flow}, Ann. of Math. (2), 168 (2008), pp.~643--674.

\bibitem{ConstantinLaxMajda85}
{\sc P.~Constantin, P.~Lax, and A.~Majda}, {\em A simple one-dimensional model
  for the three-dimensional vorticity equation}, Comm. Pure Appl. Math., 38
  (1985), pp.~715--724.

\bibitem{ConstantinMajdaTabak94}
{\sc P.~Constantin, A.~Majda, and E.~Tabak}, {\em Formation of strong fronts in
  the {$2$}-{D} quasigeostrophic thermal active scalar}, Nonlinearity, 7
  (1994), pp.~1495--1533.

\bibitem{ConstantinVicol11}
{\sc P.~Constantin and V.~Vicol}, {\em Nonlinear maximum principles for
  dissipative linear nonlocal operators and applications}, Geometric and
  Functional Analysis, to appear. arXiv:1110.0179v1 [math.AP],  (2011).

\bibitem{ConstantinWu99}
{\sc P.~Constantin and J.~Wu}, {\em Behavior of solutions of 2{D}
  quasi-geostrophic equations}, SIAM J. Math. Anal., 30 (1999), pp.~937--948.

\bibitem{ConstantinWu09}
{\sc P.~Constantin and J.~Wu}, {\em H{\"o}lder
  continuity of solutions of supercritical dissipative hydrodynamic transport
  equations}, Ann. Inst. H. Poincar{\'e} Anal. Non Lin{\'e}aire, 26 (2009),
  pp.~159--180.

\bibitem{CordobaCordobaFontelos06}
{\sc A.~C{{\'o}}rdoba, D.~C{{\'o}}rdoba, and M.~Fontelos}, {\em Integral
  inequalities for the {H}ilbert transform applied to a nonlocal transport
  equation}, J. Math. Pures Appl. (9), 86 (2006), pp.~529--540.

\bibitem{CordobaFefferman01}
{\sc D.~C\'ordoba and C.~Fefferman}, {\em Behavior of several two-dimensional
  fluid equations in singular scenarios}, Proc. Natl. Acad. Sci. USA, 98
  (2001), pp.~4311--4312.

\bibitem{DKSV12}
{\sc M.~Dabkowski, A.~Kiselev, L.~Silvestre, and V.~Vicol}, {\em Global
  well-posedness of slightly supercritical active scalar equations},
  arXiv:1203.6302v1 [math.AP],  (2012).

\bibitem{DiPernaLions1989}
{\sc R.~J. DiPerna and P.-L. Lions}, {\em Ordinary differential equations,
  transport theory and {S}obolev spaces}, Invent. Math., 98 (1989),
  pp.~511--547.

\bibitem{DolbeaultPerthame2004}
{\sc J.~Dolbeault and B.~Perthame}, {\em Optimal critical mass in the
  two-dimensional {K}eller-{S}egel model in {$\mathbb R^2$}}, C. R. Math. Acad.
  Sci. Paris, 339 (2004), pp.~611--616.

\bibitem{FriedlanderRusinVicol11}
{\sc S.~Friedlander, W.~Rusin, and V.~Vicol}, {\em On the supercritically
  diffusive magneto-geostrophic equations}, Arxiv preprint arXiv:1110.1129,
  (2011).

\bibitem{FriedlanderVicol11a}
{\sc S.~Friedlander and V.~Vicol}, {\em Global well-posedness for an
  advection-diffusion equation arising in magneto-geostrophic dynamics}, Ann.
  Inst. H. Poincar\'e Anal. Non Lin\'eaire, 28 (2011), pp.~283--301.

\bibitem{IyerNovikovRyzhikZlatos10}
{\sc G.~Iyer, A.~Novikov, L.~Ryzhik, and A.~Zlato{\v{s}}}, {\em Exit times of
  diffusions with incompressible drift}, SIAM J. Math. Anal., 42 (2010),
  pp.~2484--2498.

\bibitem{KiselevNazarov09}
{\sc A.~Kiselev and F.~Nazarov}, {\em A variation on a theme of {C}affarelli
  and {V}asseur}, Zap. Nauchn. Sem. S.-Peterburg. Otdel. Mat. Inst. Steklov.
  (POMI), 370 (2009), pp.~58--72, 220.

\bibitem{KiselevNazarovShterenberg08}
{\sc A.~Kiselev, F.~Nazarov, and R.~Shterenberg}, {\em Blow up and regularity
  for fractal {B}urgers equation}, Dyn. Partial Differ. Equ., 5 (2008),
  pp.~211--240.

\bibitem{KiselevNazarovVolberg07}
{\sc A.~Kiselev, F.~Nazarov, and A.~Volberg}, {\em Global well-posedness for
  the critical 2{D} dissipative quasi-geostrophic equation}, Invent. Math., 167
  (2007), pp.~445--453.

\bibitem{Lebesgue1907}
{\sc H.~Lebesgue}, {\em Sur le probl{\`e}me de dirichlet}, Rend. Circ. Mat.
  Palermo, 24 (1907), pp.~371--402.

\bibitem{LiRodrigo09}
{\sc D.~Li and J.~Rodrigo}, {\em Blow up for the generalized surface
  quasi-geostrophic equation with supercritical dissipation}, Comm. Math.
  Phys., 286 (2009), pp.~111--124.

\bibitem{Nash58}
{\sc J.~Nash}, {\em Continuity of solutions of parabolic and elliptic
  equations}, Amer. J. Math., 80 (1958), pp.~931--954.

\bibitem{SSSZ12}
{\sc G.~Seregin, L.~Silvestre, V.~\v{S}ver\'ak, and A.~Zlato\v{s}}, {\em On
  divergence-free drifts}, Journal of Differential Equations, 252 (2012),
  pp.~505 -- 540.

\bibitem{Silvestre10b}
{\sc L.~Silvestre}, {\em H\"older estimates for advection fractional-diffusion
  equations}, arXiv:1009.5723v2 [math.AP],  (2010).

\bibitem{Silvestre2011}
{\sc L.~Silvestre}, {\em On the differentiability of the solution to the
  {H}amilton-{J}acobi equation with critical fractional diffusion}, Adv. Math.,
  226 (2011), pp.~2020--2039.

\end{thebibliography}

\end{document}